\def \Q {{\mathbb Q}}
\def \Z {{\mathbb Z}}
\def \R {{\mathbb R}}
\def \F {{\mathbb F}}
\def \L {{\mathbb L}}
\def \C {{\mathbb C}}
\def \K {{\mathbb K}}
\def \M {{\mathbb M}}
\def \A {{\mathcal A}}
\def \Bl {{\mathcal B}}
\def \Cl {{\mathcal C}}
\def \J  {{\mathbf J}}
\def \o {{\mathbf 0}}
\def \x {{\mathbf x}}
\def \e {{\mathbf e}}
\def \al {{\alpha}}
\def \la {{\lambda}}
\def \de {{\delta}}
\def \ze {{\zeta}}
\newtheorem{theorem}{Theorem}[section]
\newtheorem{cor}[theorem]{Corollary}
\newtheorem{lemma}[theorem]{Lemma}
\newtheorem{definition}[theorem]{Definition}
\newtheorem{rem}[theorem]{Remark}
\newtheorem{fact}[theorem]{Fact}
\title{Representation of Cyclotomic Fields and Their Subfields}
\author{A. Satyanarayana Reddy\footnote{Department of Mathematics and Statistics, Indian Institute of Technology, Kanpur, India 208016; (e-mail:
satya@iitk.ac.in).} \and Shashank K Mehta \footnote{Department of Computer Science and Engineering,Indian Institute of
Technology, Kanpur, India 208016; (e-mail: skmehta@cse.iitk.ac.in). This work was partly supported by Research-I
Foundation, IIT-Kanpur, Indian Institute of Technology, Kanpur,
 India 208016.}  \and A. K. Lal\footnote{Department of Mathematics and Statistics, Indian Institute of
Technology, Kanpur, India 208016; (e-mail: arlal@iitk.ac.in).  }}
\date{}
\begin{document}
\maketitle

\begin{abstract}

Let $\K$ be a finite extension of a characteristic zero field $\F$. We say
that a pair of $n\times n$ matrices $(A,B)$ over $\F$ represents $\K$ if
$\K \cong \F[A]/\langle B \rangle$, where $\F[A]$ denotes the
subalgebra of $\M_n(\F)$ containing $A$ and $\langle B \rangle$ is an ideal in
$\F[A]$, generated by $B$. In particular,  $A$ is said to represent
 the field $\K$ if  there exists an irreducible polynomial $q(x)\in \F[x]$ which
divides the minimal polynomial of $A$ and $\K \cong \F[A]/\langle q(A) \rangle$.

In this paper, we identify the smallest order
circulant matrix representation for any subfield of a cyclotomic field.
Furthermore, if $p$ is a prime and $\K$ is a subfield of the $p$-th cyclotomic
field, then we obtain a zero-one circulant matrix
$A$ of size $p\times p$ such that $(A,\J)$ represents $\K$, where $\J$ is
the matrix with all entries $1$.
In case, the integer $n$ has at most two distinct prime factors,
we find the smallest order $0,1$-companion matrix that  represents the $n$-th
cyclotomic field. We also find bounds on the size of such
companion matrices when $n$ has more than two prime factors.
\end{abstract}

{\bf Keywords:}
Circulant matrix, Companion Matrix, Cyclotomic field, Cyclotomic Polynomial,
 M\"obius Function, Ramanujan Sum.

{\textbf {Mathematics Subject Classification 2010:}} 15A18, 15B05,
11C08, 12F10.

\section{Introduction and Preliminaries} \label{sec:intro}
In this paper, we will be interested in fields $\F$ that have characteristic $0$.
Thus, one can assume that  $\Q\subseteq \F\subseteq \C$, where $\Q$ is the field of rational numbers and $\C$ is the field of complex numbers.  An element $\al \in \C$ is said to be algebraic over  $\F$, if $\al$ is a
root of a polynomial $f(x) \in \F[x]$. The polynomial $f(x)$ is
said to be the minimal polynomial of $\al$ over $\F$, if $\al$ is a root
of $f(x)$,  $f(x)$ is monic  and  is irreducible in
$\F[x]$. In this paper,  $\M_n(\F)$ will denote the set of all $n \times n$ matrix over $\F$.  All  vector symbols will denote column vectors and they will be written in bold face. Also, the vector of all $1$'s will be denoted by $\e$ and a square matrix with all entries $1$, will be denoted by $\J$. Then $\J = \e
\e^t$, where $\e^t$ denotes the transpose of $\e$. The symbol $\o$ will denote either a vector or a matrix having all entries zero.

Recall that for any $A \in \M_n(\F)$, a celebrated result, commonly known as the Cayley-Hamilton Theorem, states that the matrix $A$ satisfies its own characteristic polynomial. That is, if $\varsigma_A(x) = \det(x I - A)$ is the characteristic polynomial of $A$, then $\varsigma_A(A)$ as an element of $\F[A]$, equals $\o$. Let $S=\{f(x) \in \F[x] | f(A)=\o\}$. Then  $S$ is  an ideal in $\F[x]$ and $S = \langle p(x) \rangle$ for some monic polynomial $p(x) \in \F[x]$.  By definition, $p(x)$ divides $\varsigma_A(x)$ and for any $B \in \F[A]$, there exists a unique
polynomial $g(x) \in \F[x]$, with $\deg(g(x)) < \deg (p(x))$ such that $B = g(A).$ The polynomial $p(x) $  is called the minimal polynomial of $A$ and is  denoted by $p_A(x)$.

We are now ready to state a few results from matrix theory and
abstract algebra.  For proofs and notations related with these results
the reader is advised to refer to the book Abstract Algebra by Dummit $\&$
Foote~\cite{D:F} and Linear Algebra by Hoffman $\&$ Kunze~\cite{H:K}.

\begin{lemma}[Hoffman $\&$ Kunze, Pages~$204, 231$ \cite{H:K}] \label{lem:aa3}
Let $A$ be a square matrix.
\begin{enumerate}
\item  \label{lem:aa3:1} Then A
 is diagonalizable if and only if its minimal polynomial is separable.
\item  \label{lem:aa3:2} Let A be a matrix with distinct eigenvalues. Then a
matrix $B$ commutes
with $A$ if and only if $B$ is a polynomial in $A$.
\end{enumerate}
\end{lemma}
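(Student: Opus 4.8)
The statement splits into two independent claims, and I would prove each by establishing its two implications separately.

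For part~(1), the plan is as follows. For the forward direction, suppose $A$ is diagonalizable, so $A = P D P^{-1}$ for some invertible $P$ and diagonal $D$. Listing the distinct diagonal entries (equivalently, the distinct eigenvalues) as $\mu_1, \ldots, \mu_k$, I would verify that $\prod_{i=1}^{k}(A - \mu_i I) = P \left(\prod_{i=1}^{k}(D - \mu_i I)\right) P^{-1} = \o$, since every diagonal entry of $D$ is annihilated by exactly one factor. Hence $p_A(x)$ divides the squarefree polynomial $\prod_{i=1}^{k}(x - \mu_i)$, forcing $p_A(x)$ to be separable. For the converse, I would assume $p_A(x) = \prod_{i=1}^{k}(x - \mu_i)$ with the $\mu_i$ distinct, and use the Chinese Remainder Theorem (equivalently, a partial-fractions decomposition of $1/p_A(x)$) to produce polynomials $e_i(x)$ with $e_i(A)$ the projection onto $\ker(A - \mu_i I)$; these idempotents satisfy $e_i(A)e_j(A) = \o$ for $i \neq j$ and $\sum_{i} e_i(A) = I$, decomposing $\F^n$ as a direct sum of eigenspaces, so stacking eigenbases gives a diagonalizing $P$.

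For part~(2), the easy implication is immediate: any $B = g(A)$ commutes with $A$ because powers of $A$ commute with $A$. For the nontrivial direction, I would exploit that $A$ having $n$ distinct eigenvalues makes it diagonalizable by part~(1), say $A = P D P^{-1}$ with $D = \mathrm{diag}(\la_1, \ldots, \la_n)$ and the $\la_i$ pairwise distinct. If $B$ commutes with $A$, then $C := P^{-1} B P$ commutes with $D$; comparing the $(i,j)$ entry of $CD$ and $DC$ gives $(\la_j - \la_i) C_{ij} = 0$, so $C$ is diagonal, say $C = \mathrm{diag}(b_1, \ldots, b_n)$. Finally, Lagrange interpolation on the distinct nodes $\la_1, \ldots, \la_n$ yields a polynomial $g(x)$ of degree less than $n$ with $g(\la_i) = b_i$, whence $g(A) = P\, g(D)\, P^{-1} = P C P^{-1} = B$.

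The main obstacle I anticipate is the converse in part~(1): producing the spectral projections cleanly. The most economical route is the Chinese Remainder isomorphism $\F[x]/\langle p_A(x)\rangle \cong \prod_i \F[x]/\langle x - \mu_i\rangle$, which hands over the idempotents $e_i(x)$ directly, after which the verifications that $\sum_i e_i(A) = I$ and that $e_i(A)$ projects onto the $\mu_i$-eigenspace are routine. All of part~(2) then reduces to the single observation that commuting with a distinct-entry diagonal matrix forces diagonality, together with interpolation on distinct nodes.
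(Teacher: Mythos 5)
Your proof is correct, but note that the paper offers no proof of this lemma at all: it is quoted verbatim from Hoffman \& Kunze (pages~204 and~231) as background, with the reader referred to that book. Your argument --- spectral idempotents via the Chinese Remainder Theorem for part~(1), and conjugating to a diagonal matrix with distinct entries plus Lagrange interpolation for part~(2) --- is precisely the standard textbook route, so there is nothing to reconcile with the paper. The only point worth flagging is that ``diagonalizable'' must be read over a field containing the eigenvalues (here $\C$, consistent with the paper's setting $\Q \subseteq \F \subseteq \C$): over $\F$ itself separability of $p_A$ is not sufficient (e.g.\ $p_A(x) = x^2+1$ over $\Q$), and your converse in part~(1) implicitly makes this choice by writing $p_A(x) = \prod_i (x-\mu_i)$.
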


Before stating the next result,  recall that for a monic polynomial $f(x) = x^n - c_{n-1}x^{n-1} - c_{n-2}x^{n-2} - \cdots - c_1
x - c_0 \in \F[x]$, its  companion matrix, denoted
$\Cl(f)$, is defined as
$$ \Cl(f) =
    \begin{bmatrix}
    0 & 1 & 0 & \dots & 0 & 0 \\
    0 & 0 & 1 & \dots & 0 & 0 \\
        \vdots &\vdots & \ddots & \ddots & \vdots & \vdots \\
     0 & 0 & 0 &\dots & 0 & 1\\
     c_0 & c_1 & c_2 & \dots & c_{n-2} & c_{n-1} \\
     \end{bmatrix}.$$
For example, the $n \times n$ matrix $$W_n=
\begin{bmatrix}
    0 & 1 & 0 & \dots & 0 & 0 \\
    0 & 0 & 1 & \dots & 0 & 0 \\
        \vdots &\vdots & \ddots & \ddots & \vdots & \vdots \\
     0 & 0 & 0 &\dots & 0 & 1\\
     1 & 0 & 0 & \dots & 0 & 0 \\
     \end{bmatrix}$$ is the companion matrix of the polynomial $x^n -1.$ Note that $W_n$ is a $0,1$-circulant matrix and $x^n - 1$ is its minimal polynomial. It is well known (for example, see Davis~\cite{davis}) that every
circulant matrix is a polynomial in $W_n$. Due to the above property, the matrix $W_n$ is called the fundamental circulant matrix. The next result also appears in~\cite{H:K}.

\begin{lemma}[Hoffman $\&$ Kunze, Page~$230$ \cite{H:K}] \label{lem:la}
Let $\Cl(f)$ be the companion matrix of $f(x) =  x^n - c_{n-1}x^{n-1} - c_{n-2}x^{n-2} - \cdots - c_1
x - c_0 \in \F[x]$ . Then \begin{enumerate}
\item \label{lem:la:1} $f(x)$ is both the minimal and the
characteristic polynomial of $\Cl(f)$.
\item \label{lem:la:2} all eigenvalues of the companion matrix
$\Cl(f)$ are distinct if and only if $\Cl(f)$ is diagonalizable.
\end{enumerate}
\end{lemma}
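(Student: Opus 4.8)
The plan is to prove the two parts in sequence, with part~\ref{lem:la:1} carrying essentially all of the work and part~\ref{lem:la:2} following formally from it together with Lemma~\ref{lem:aa3}(\ref{lem:aa3:1}).

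For part~\ref{lem:la:1}, I would first compute the characteristic polynomial $\varsigma_{\Cl(f)}(x) = \det(xI - \Cl(f))$ directly. Expanding this determinant along the first column leaves only two nonzero entries, the $x$ in position $(1,1)$ and the $-c_0$ in position $(n,1)$; the $(1,1)$-minor is again a companion-type determinant of size $n-1$ (the characteristic matrix of the companion matrix of $x^{n-1} - c_{n-1}x^{n-2} - \cdots - c_1$), while the $(n,1)$-minor is triangular with $-1$ on the diagonal and hence equals $(-1)^{n-1}$. A short induction on $n$ then gives $\varsigma_{\Cl(f)}(x) = f(x)$, the only delicate point being to track the cofactor signs so that the constant term comes out to $-c_0$ rather than $+c_0$. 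By the Cayley--Hamilton Theorem this already yields $f(\Cl(f)) = \o$, so the minimal polynomial $p_{\Cl(f)}(x)$ divides $f(x)$.

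To finish I must show $\deg p_{\Cl(f)} \ge n$, which forces $p_{\Cl(f)} = f$ since both are monic of degree $n$. Here I would exhibit a cyclic vector: writing $\u_1,\dots,\u_n$ for the standard basis of $\F^n$, the shift structure of $\Cl(f)$ (namely $(\Cl(f)\x)_i = \x_{i+1}$ for $i<n$) gives $\Cl(f)^k \u_n = \u_{n-k} + (\text{a combination of } \u_{n-k+1},\dots,\u_n)$ for $0 \le k \le n-1$. The leading terms $\u_n, \u_{n-1}, \dots, \u_1$ are distinct, so the passage from $\{\u_n, \Cl(f)\u_n, \dots, \Cl(f)^{n-1}\u_n\}$ to the standard basis is triangular and invertible; hence these $n$ vectors are linearly independent. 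Consequently no nonzero polynomial of degree less than $n$ can annihilate $\Cl(f)$, giving $\deg p_{\Cl(f)} \ge n$ as required. I expect this cyclic-vector step to be the main obstacle, since it is where one must actually use the precise form of the matrix rather than generic facts about determinants.

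For part~\ref{lem:la:2}, the key point is that part~\ref{lem:la:1} makes the minimal and characteristic polynomials coincide, so ``separable minimal polynomial'' and ``distinct eigenvalues'' become the same condition. Explicitly, if $\Cl(f)$ has $n$ distinct eigenvalues then $\varsigma_{\Cl(f)} = f$ has no repeated roots, so $p_{\Cl(f)} = f$ is separable and Lemma~\ref{lem:aa3}(\ref{lem:aa3:1}) gives diagonalizability; conversely, if $\Cl(f)$ is diagonalizable then Lemma~\ref{lem:aa3}(\ref{lem:aa3:1}) makes $p_{\Cl(f)} = f$ separable, whence $f$ has $n$ distinct roots and $\Cl(f)$ has $n$ distinct eigenvalues. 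This direction is purely formal once part~\ref{lem:la:1} is in hand.
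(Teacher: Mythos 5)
The paper states this lemma without proof, deferring to Hoffman and Kunze; your argument is correct and is essentially the standard proof from that source: the first-column expansion with the cofactor signs tracked correctly gives $\varsigma_{\Cl(f)}(x)=f(x)$, the cyclic vector $\u_n$ (whose iterates are unitriangular over the standard basis) rules out any annihilating polynomial of degree less than $n$, and part~\ref{lem:la:2} then follows formally from part~\ref{lem:la:1} together with Lemma~\ref{lem:aa3}.\ref{lem:aa3:1}. No gaps.
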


The next result is also well known. The proof  can be easily obtained by using basic results in abstract algebra and it also appears in~\cite{D:F}.

\begin{theorem}\label{thm:a3}
Let $p_A(x)$ be  the minimal polynomial of $A\in \M_n(\F)$.
\begin{enumerate}
\item \label{thm:a3:2} Let $g(x) \in \F[x]$ and let $h(x) =
\gcd(g(x),p_A(x))$. Then $\langle g(A) \rangle =\langle h(A) \rangle.$
\item \label{thm:a3:1}
If $q(x)$ is a non-constant factor of $p_A(x)$ in $\F[x]$ then
$\F[A]/\langle q(A)\rangle \cong \F[x]/\langle q(x)\rangle$. In
particular, if $q(x)$ is irreducible and $q(\al) = 0$ for some $\al \in
\C$ then $\F[A]/\langle q(A)\rangle \cong \F[x]/\langle q(x) \rangle
\cong  \F(\al).$ That is, $\F[A]/\langle q(A) \rangle$ is a field.
\end{enumerate}
\end{theorem}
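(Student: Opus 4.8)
The plan rests on one structural fact that drives both parts: the evaluation map $\phi\colon \F[x]\to \F[A]$ given by $\phi(f(x))=f(A)$ is a surjective ring homomorphism whose kernel is exactly $\langle p_A(x)\rangle$. The inclusion $\langle p_A(x)\rangle\subseteq\ker\phi$ is the relation $p_A(A)=\o$ recalled above, and the reverse inclusion is the defining minimality of $p_A(x)$. By the first isomorphism theorem this yields $\F[A]\cong \F[x]/\langle p_A(x)\rangle$, which I would state at the outset and reuse throughout.

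For part~\ref{thm:a3:2}, I would argue directly with B\'ezout's identity rather than passing to the quotient. Since $\F[x]$ is a principal ideal domain, writing $h(x)=\gcd(g(x),p_A(x))$ there exist $a(x),b(x)\in\F[x]$ with
\[
h(x)=a(x)g(x)+b(x)p_A(x).
\]
Evaluating at $A$ and using $p_A(A)=\o$ gives $h(A)=a(A)g(A)$, so $h(A)\in\langle g(A)\rangle$ and hence $\langle h(A)\rangle\subseteq\langle g(A)\rangle$. Conversely, since $h(x)\mid g(x)$ we may write $g(x)=h(x)t(x)$, so $g(A)=h(A)t(A)\in\langle h(A)\rangle$ and $\langle g(A)\rangle\subseteq\langle h(A)\rangle$; equality follows.

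For part~\ref{thm:a3:1}, I would transport the ideal $\langle q(A)\rangle$ through the isomorphism $\F[A]\cong\F[x]/\langle p_A(x)\rangle$. Because $q(x)$ divides $p_A(x)$ we have $\langle p_A(x)\rangle\subseteq\langle q(x)\rangle$, and under the quotient the ideal generated by the image of $q(x)$ is precisely $\langle q(x)\rangle/\langle p_A(x)\rangle$. The third isomorphism theorem then gives
\[
\F[A]/\langle q(A)\rangle\;\cong\;\bigl(\F[x]/\langle p_A(x)\rangle\bigr)\big/\bigl(\langle q(x)\rangle/\langle p_A(x)\rangle\bigr)\;\cong\;\F[x]/\langle q(x)\rangle.
\]
When $q(x)$ is irreducible it is a prime, hence maximal, element of the PID $\F[x]$, so $\F[x]/\langle q(x)\rangle$ is a field, which immediately makes $\F[A]/\langle q(A)\rangle$ a field. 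Finally, if $q(\al)=0$ for some $\al\in\C$, the evaluation map $x\mapsto\al$ descends to the standard isomorphism $\F[x]/\langle q(x)\rangle\cong\F(\al)$, completing the chain.

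The computations are routine; the only point demanding care is the ideal bookkeeping in part~\ref{thm:a3:1}, namely verifying that $\langle q(A)\rangle$ corresponds under the isomorphism to $\langle q(x)\rangle/\langle p_A(x)\rangle$, which is exactly where the hypothesis $q(x)\mid p_A(x)$ (equivalently $\langle p_A(x)\rangle\subseteq\langle q(x)\rangle$) is used. If one preferred to avoid the quotient-of-a-quotient argument, the same conclusion could be reached by directly constructing the surjection $\F[A]\to\F[x]/\langle q(x)\rangle$ sending $g(A)\mapsto g(x)+\langle q(x)\rangle$ and identifying its kernel as $\langle q(A)\rangle$.
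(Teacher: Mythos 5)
Your proof is correct. The paper itself gives no proof of this theorem --- it states only that the result is well known, follows from basic results in abstract algebra, and appears in Dummit \& Foote --- and your argument (the evaluation homomorphism $\F[x]\to\F[A]$ with kernel $\langle p_A(x)\rangle$, B\'ezout's identity for part 1, and the third isomorphism theorem plus maximality of $\langle q(x)\rangle$ for part 2) is precisely the standard argument being alluded to, with the one delicate point, that $\langle q(A)\rangle$ corresponds to $\langle q(x)\rangle/\langle p_A(x)\rangle$ because $q(x)\mid p_A(x)$, handled explicitly.
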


As a corollary of Theorem~\ref{thm:a3}, one has the following result. To state the result, recall that a pair of $n\times n$ matrices $(A,B)$ over $\F$ is said to represent an extension field $\K$ if $\K \cong
\F[A]/\langle B \rangle$, where $\langle B \rangle$ is an ideal in  $\F[A]$
generated by $B$.

\begin{cor}\label{c1} Let $\al \in \C$. Then the matrix pair $(A,B)$ represents $\F(\al)$, a field, if and only if $\al$ is an eigenvalue of $A$, $q(x)$ is the minimal polynomial of $\al$ over $\F$ and
$\langle B\rangle =\langle q(A) \rangle$ in $\F[A]$.
\end{cor}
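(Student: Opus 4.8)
The plan is to prove Corollary~\ref{c1} as a direct consequence of Theorem~\ref{thm:a3}, treating the biconditional in two directions. The key observation tying everything together is that $\F[A]/\langle B\rangle$ is isomorphic to a field $\F(\al)$ precisely when the ideal $\langle B\rangle$ coincides with $\langle q(A)\rangle$ for the minimal polynomial $q(x)$ of $\al$, and this forces $\al$ to be a root of $p_A(x)$, i.e.\ an eigenvalue of $A$.

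For the reverse (sufficiency) direction I would start by assuming that $\al$ is an eigenvalue of $A$, that $q(x)$ is the minimal polynomial of $\al$ over $\F$, and that $\langle B\rangle=\langle q(A)\rangle$. Since $\al$ is an eigenvalue, $q(x)$ divides $p_A(x)$; as $q(x)$ is irreducible (being a minimal polynomial), it is a non-constant factor of $p_A(x)$. Then Theorem~\ref{thm:a3}, part~\ref{thm:a3:1}, applies directly to give
\begin{equation*}
\F[A]/\langle B\rangle=\F[A]/\langle q(A)\rangle\cong \F[x]/\langle q(x)\rangle\cong \F(\al),
\end{equation*}
which is exactly the statement that $(A,B)$ represents $\F(\al)$.

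For the forward (necessity) direction I would assume $\F[A]/\langle B\rangle\cong \F(\al)$ and recover the three stated conditions. First, by Theorem~\ref{thm:a3}, part~\ref{thm:a3:2}, the ideal $\langle B\rangle$ equals $\langle h(A)\rangle$ where $h(x)=\gcd(g(x),p_A(x))$ and $B=g(A)$ for some polynomial $g$; thus without loss of generality the ideal is generated by a monic divisor $h(x)$ of $p_A(x)$. The crux is then to argue that the quotient $\F[A]/\langle h(A)\rangle\cong\F[x]/\langle h(x)\rangle$ is a field if and only if $h(x)$ is irreducible over $\F$: this is standard, since a quotient of a principal ideal domain by a principal ideal is a field exactly when the generator is irreducible (prime). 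Once $h(x)$ is known to be irreducible and to divide $p_A(x)$, it is the minimal polynomial over $\F$ of each of its roots in $\C$; choosing the root $\al$ with $\F[x]/\langle h(x)\rangle\cong\F(\al)$ matching the given isomorphism identifies $q(x)=h(x)$, shows $\al$ is a root of $p_A(x)$ (hence an eigenvalue of $A$), and gives $\langle B\rangle=\langle q(A)\rangle$.

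The step I expect to require the most care is the necessity direction, specifically pinning down that the generator of $\langle B\rangle$ may be taken to be an irreducible divisor of $p_A(x)$ and that the resulting root $\al$ is genuinely an eigenvalue rather than merely an abstractly adjoined element. This rests on the isomorphism $\F[A]/\langle h(A)\rangle\cong \F[x]/\langle h(x)\rangle$ from Theorem~\ref{thm:a3} together with the fact that the roots of any factor of $p_A(x)$ lie among the eigenvalues of $A$; the remaining algebra (reduction to a $\gcd$, the irreducibility criterion for a quotient being a field) is routine and can be cited from \cite{D:F}.
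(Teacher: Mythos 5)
Your proof is correct and is essentially the paper's own argument: the paper states Corollary~\ref{c1} without an explicit proof, presenting it as an immediate consequence of Theorem~\ref{thm:a3}, and your two directions (part~\ref{thm:a3:1} for sufficiency; part~\ref{thm:a3:2} plus the standard irreducibility criterion for the quotient to be a field for necessity) are exactly the intended unpacking. The one point of looseness in your necessity step --- that $\F(\al)\cong\F[x]/\langle h(x)\rangle$ pins down $\al$ as a root of $h(x)$ only up to conjugacy --- is a looseness already present in the paper's own statement, which explicitly acknowledges that the choice of $\al$ is not unique.
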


That is, suppose that $q(x)$ is the minimal polynomial of an eigenvalue $\al$ of $A$. Then the matrix pair $(A,B)$ represents an extension
$\K=\F(\al)$ of $\F$ if and only if  $\langle B \rangle = \langle
q(A)\rangle$. Hence with an abuse of the language, we may say that the matrix
$A$ represent $\K$ to mean that $\K=\F(\al)$, whenever $\al$ is an eigenvalue of
$A$. Also, it is well known that the choice of  $\alpha$ is not unique.
Therefore, depending on the
choice of $\alpha$ the corresponding matrices that represent $\K$ can
vary. This issue becomes significant when we search for a smallest representation (in terms of order).

We are now ready to explain the motivation for our study.
Let $G$ be a finite group and let $n\in \Z^+$. A matrix representation of $G$ is a homomorphism from $G$ into $GL_n(\F)$, where $GL_n(\F)$ is the group of
invertible $n\times n$ matrices with entries from $\F$.
The representation is called faithful if the image of the homomorphism is
isomorphic to $G$.  A similar question arises whether an extension field of $\F$ has a representation in $\M_n(\F)$. For example, let  $\al$ be an algebraic number over $\F$ with $q(x) \in \F[x] $ as  its minimal polynomial.
Then, using Lemma~\ref{lem:la} and Corollary~\ref{c1}, we see that $\F(\al) \cong \F[\Cl(q))]$, where $\Cl(q)$ is the companion matrix of $q(x)$. This leads
to the following natural questions:
\begin{enumerate}
\item does there
exist a matrix $A$ in $\M_n(\F)$  with some  specified properties such that
$\F[A]\cong \F(\al)$?
\item if it exists, what is the smallest possible positive integer $n$?
\end{enumerate}

For example, fix a positive integer $n$ and consider $\ze_n$, a primitive $n$-th
root of unity. Then the polynomial $\Phi_n(x)$ over $\Q$, called the $n$-th
cyclotomic polynomial,  is the minimal polynomial of $\ze_n$ and hence is irreducible over $\Q$. In this case, is
it possible to find a matrix $A$ which is either circulant over $\Q$ or is a
$0,1$-companion matrix of $\Phi_n(x)$ such that  $\Q[A]\cong \Q(\ze_n)$?  It
can easily be checked that this is true only
when $n=1\;$ or $n=2$. For  $n> 2$, such a result is not true. To understand this, recall that  $x^n-1=\prod\limits_{d|n}\Phi_d(x)$, where $\Phi_d(x) \in \Z[x]$ is the minimal polynomial of $\ze_d$ in $\Q[x]$ and for any two integers $s,t$, the notation $s | t$ means that $s$ divides $t$.
Consequently, from Corollary~\ref{c1}, it follows that for each divisor $d$
of $n$,
\begin{equation}\label{eqn:zetad}
\Q[W_n] / \langle \Phi_d(W_n) \rangle \cong \Q(\ze_d).
\end{equation}  In particular,
$\Q[W_n] / \langle \Phi_n(W_n) \rangle \cong \Q(\ze_n).$ That is, in this case, the pair $(W_n,\Phi_n(W_n))$ represents the field
$\Q(\ze_n)$.

To proceed further, we need the following definitions and notations.
A directed graph (in short, digraph) is an ordered pair $X=(V,E)$ that
consists of two sets
$V$, the vertex set, and $E$, the edge set, where $V$ is non-empty and  $E
\subset V \times V$. If $e=(u, v) \in E$ with $u \ne v$ then the edge $e$ is
said to be incident from  $u$ to $v$ or $u$ is said to be the
initial vertex and $v$ the terminal vertex of $e$. An edge $e = (u,u)$ is
called a loop. A digraph is called a  graph if $(u,v) \in E$ whenever $(v,u) \in E$, for any two elements $u,v \in V$.  If $u, v \in V$, then an edge between $u$ and $v$ in the graph $X$ is denoted by $e=\{u,v\}$ and in this case, we say that $e$ is incident with $u$ and $v$ or the vertex $u$ is adjacent to the vertex $v$, or vice-versa.   For any finite set $S$, let $|S|$ denote the number of elements in $S$. Then a graph/digraph is said to be finite, if  $|V| $ (called the
order of $X$)  is finite. A graph is called simple if it has no loops.

Let $X=(V,E)$ be a graph. Then the degree of a vertex $v\in V$, denoted
$d(v)$, is the number of edges incident with it. In case $v$ is  a vertex of a
digraph $X$, one defines in-degree of $v$, denoted $d^+(v)$, as the number
of edges that have $v$ as a terminal vertex and out-degree of $v$, denoted
$d^-(v)$, as the number of edges that have $v$ as an initial vertex.  The simple
graph $X$ that has an edge for each pair of vertices is called a complete
graph, denoted $K_n$, where $n$ is the number of vertices of $X$. A graph with no edge is called a null graph. The  cycle graph on $n$
vertices, say $u_1, u_2, \ldots, u_n$, denoted $C_n$,
 is a simple graph in which  $\{u_i,u_j\}$ is an edge
if and only if $i-j\equiv \pm 1 \pmod n$. A graph on $n$ vertices, say $u_1, u_2,
\ldots, u_n$, denoted $X_n$,  is called a path graph,
if for each $i, 1
\le i \le n-1$, the set $\{ u_i, u_{i+1}\}$ is an edge.
A graph (digraph) $X$ is said to be $k$- regular if
$d(v) = k$ ($d^+(v) = d^-(v) = k$) for all $v \in V$. Unless specified otherwise, all the graphs in this paper are assumed to be finite and simple.

Let $X=(V,E)$ be a digraph. Then  the adjacency matrix of $X$, denoted $A = [a_{ij}]$ is a square matrix of order $|V|$ with $a_{ij}=1$ whenever $(i,j) \in E$ and $0$, otherwise. In case $X$ is a graph then it can be easily seen that $A$ is a symmetric matrix.

Now, let $A$ be the adjacency matrix of a connected $k$-regular graph $X$ on
$n$ vertices. Then, it is well known that $k$ is a simple eigenvalue of
$A$. Thus, the minimal polynomial of $A$ is of the form $(x-k) q(x)$ for
some $q(x) \in \Z[x]$. Note that $k$ is a simple eigenvalue of $A$ implies
that $q(k) \ne 0$ and for any other eigenvalue $\al$ of $A, \; q(\al) =
0$. Then with $q(x)$ as defined, we state the following well known result. We
present the proof for the sake of completeness.

\begin{lemma}[Hoffman~\cite{hof}] \label{lem:regular:J}
Let $X$ be a connected $k$-regular graph on $n$ vertices with
minimal polynomial $(x-k) q(x) \in \Z[x]$. Then the matrix $\J$ equals $ \dfrac{n}{q(k)} q(A)$.
\end{lemma}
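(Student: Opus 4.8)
The plan is to exploit the fact that, being the adjacency matrix of a simple graph, $A$ is a real symmetric matrix and hence, by the spectral theorem (equivalently, by the separability criterion in the first part of Lemma~\ref{lem:aa3}), is diagonalizable with an orthonormal eigenbasis. First I would record the two structural facts already isolated in the paragraph preceding the statement. Since $X$ is $k$-regular, every row of $A$ sums to $k$, so $A\e = k\e$; thus $\e$ is an eigenvector of $A$ for the eigenvalue $k$. Since $X$ is connected, this eigenvalue is simple, so its eigenspace is exactly the line $\R\e$. Because the minimal polynomial of $A$ is $(x-k)q(x)$, the remaining eigenvalues of $A$ are precisely the roots of $q(x)$; each such eigenvalue $\al$ satisfies $q(\al)=0$, whereas $q(k)\neq\nobreak 0$.

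Next I would evaluate $q(A)$ eigenspace by eigenspace. Writing $\R^n = \R\e \oplus \e^{\perp}$, where $\e^{\perp}$ is the orthogonal direct sum of the eigenspaces of the remaining eigenvalues, I use the elementary fact that for any eigenvector $\v$ of $A$ with eigenvalue $\la$ one has $q(A)\v = q(\la)\v$. Applying this gives $q(A)\e = q(k)\e$, while $q(A)\v = q(\al)\v = \o$ for every eigenvector $\v\in\e^{\perp}$, since its eigenvalue $\al$ is a root of $q$. Hence $q(A)$ acts as the scalar $q(k)$ on $\R\e$ and as the zero map on $\e^{\perp}$.

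Finally I would identify this operator with a multiple of $\J$. The orthogonal projection onto $\R\e$ is $P = \e\e^t/(\e^t\e) = \J/n$, since $\e^t\e = n$ and $\J = \e\e^t$. The matrix $q(k)P$ also sends $\e$ to $q(k)\e$ and annihilates $\e^{\perp}$, so $q(A)$ and $q(k)P$ agree on the basis of $\R^n$ formed by $\e$ together with an orthonormal basis of $\e^{\perp}$; therefore they are equal, i.e.\ $q(A) = q(k)\,\J/n$. Dividing by the nonzero scalar $q(k)$ and multiplying by $n$ then yields $\J = \frac{n}{q(k)}\,q(A)$, as claimed.

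The step I expect to be the crux is the passage from "$q(A)$ is the scalar $q(k)$ on $\R\e$ and zero on $\e^{\perp}$" to "$q(A) = q(k)\,\J/n$": it is precisely here that diagonalizability (guaranteed by the symmetry of $A$) and the orthogonality of the eigenspaces are needed, since they ensure that the action of $q(A)$ on a full basis determines the matrix completely. The two remaining ingredients—that $\e$ spans the $k$-eigenspace and that $q$ vanishes at every other eigenvalue—are exactly the facts established just before the statement, so no additional argument is required for them.
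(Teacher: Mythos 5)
Your proof is correct and follows essentially the same route as the paper's: both arguments diagonalize $A$ via an orthonormal eigenbasis, observe that $q(A)$ acts as $q(k)$ on $\R\e$ and as zero on the orthogonal complement, and conclude that $q(A)$ and $\dfrac{q(k)}{n}\J$ agree on a full basis and hence are equal. Your phrasing via the projection $P=\J/n$ is just a repackaging of the paper's direct check that $\J\x=\o$ for $\x\perp\e$ and $\J\e=n\e$.
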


\begin{proof}
As $X$ is a $k$-regular graph, its adjacency matrix $A$  satisfies $A \e = k \e$
and hence
\begin{equation} \label{eqn:J} \J A = A
\J = k \J \; {\mbox{ and }} \; q(A) \e = q(k) \e. \end{equation}
Also, the eigenvectors of $A$ can be chosen to form an orthonormal basis
$\mathcal B$ of $\R^n$. Hence $\dfrac{1}{\sqrt{n}} \e \in {\mathcal B}$ and
thus,
for any vector $\x \in {\mathcal B}, \x \ne \e$, $\x^t \e = 0$. Therefore,
$\J \x = \o$ and using Equation~(\ref{eqn:J}), $\J \dfrac{1}{\sqrt{n}} \e =
\dfrac{n}{\sqrt{n}} \e = \bigl(\dfrac{n}{q(k)} q(k) \bigr)
\dfrac{1}{\sqrt{n}} \e= \dfrac{n}{q(k)} q(A) \dfrac{1}{\sqrt{n}}
\e.$
Also, $q(\la) = 0$ for any eigenvalue $\la \ne k$ of $A$ implies that $q(A) \x =
q(\al) \x = \o$. That is, $\dfrac{n}{q(k)} q(A) \x = \o$.

Thus, the image of two matrices $\J$ and
$\dfrac{n}{q(k)} q(A)$ on a basis of $\R^n$ are same and hence the
two matrices are equal. Therefore  $\J = \dfrac{n}{q(k)} q(A).$
\end{proof}

The next corollary is an immediate consequence of
Theorem~\ref{thm:a3} and
Lemma~\ref{lem:regular:J}.

\begin{cor}\label{cor:regular}
Let $A$ be the adjacency matrix of a connected k-regular graph $X$
on $n$ vertices.  Then $\F[A]/\langle \J \rangle \cong \F[x]/\langle
q(x)\rangle$.
\end{cor}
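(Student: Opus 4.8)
The plan is to reduce the statement to Theorem~\ref{thm:a3}(\ref{thm:a3:1}) by showing that, inside the commutative algebra $\F[A]$, the principal ideal $\langle \J \rangle$ coincides with $\langle q(A)\rangle$. Recall the standing setup: since $X$ is connected and $k$-regular, $k$ is a simple eigenvalue of its adjacency matrix $A$, and since $A$ is symmetric it is diagonalizable, so its minimal polynomial factors as $p_A(x) = (x-k)q(x)$ with $q(k)\neq 0$ and $q(\al)=0$ for every eigenvalue $\al \neq k$. In particular $q(x)$ is a non-constant factor of $p_A(x)$ whenever $X$ has more than one vertex.

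First I would pin down the ideal $\langle \J\rangle$. By Lemma~\ref{lem:regular:J} we have $\J = \frac{n}{q(k)}\,q(A)$, and the scalar $c := n/q(k)$ is nonzero because $n\geq 1$ and $q(k)\neq 0$. To phrase this in the language of Theorem~\ref{thm:a3}(\ref{thm:a3:2}), set $g(x) = c\,q(x) \in \F[x]$, so that $g(A) = \J$. Since $q(x)$ divides $p_A(x)$ and scaling by the nonzero constant $c$ does not affect the gcd, we get $h(x) := \gcd(g(x), p_A(x)) = q(x)$. Hence Theorem~\ref{thm:a3}(\ref{thm:a3:2}) yields $\langle \J \rangle = \langle g(A)\rangle = \langle h(A)\rangle = \langle q(A)\rangle$. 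Equivalently, one can argue directly: $\J = c\,q(A)$ gives $\langle \J\rangle \subseteq \langle q(A)\rangle$, while $q(A) = c^{-1}\J$ gives the reverse inclusion.

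Finally I would apply the isomorphism. Because $q(x)$ is a non-constant factor of $p_A(x)$, Theorem~\ref{thm:a3}(\ref{thm:a3:1}) gives $\F[A]/\langle q(A)\rangle \cong \F[x]/\langle q(x)\rangle$. Chaining this with the ideal identity from the previous step produces $\F[A]/\langle \J\rangle \cong \F[x]/\langle q(x)\rangle$, as required.

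Since every step is a direct appeal to a result already stated, there is no serious obstacle; the only points needing care are checking that the scalar $c = n/q(k)$ is genuinely nonzero, so that $\J$ and $q(A)$ generate the same ideal rather than $q(A)$ merely lying in $\langle \J\rangle$, and that $q(x)$ is non-constant, which holds precisely because a connected $k$-regular graph on at least two vertices has an eigenvalue different from $k$.
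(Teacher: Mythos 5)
Your proof is correct and follows essentially the same route as the paper: invoke Lemma~\ref{lem:regular:J} to write $\J$ as the nonzero scalar multiple $\frac{n}{q(k)}q(A)$, conclude $\langle \J\rangle = \langle q(A)\rangle$, and then apply Theorem~\ref{thm:a3}.\ref{thm:a3:1} to the non-constant factor $q(x)$ of $p_A(x)$. Your extra remarks on why $c=n/q(k)\neq 0$ and why $q(x)$ is non-constant are sensible added care but do not change the argument.
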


\begin{proof}
Since $q(x)$ is a factor of the minimal polynomial $(x-k)q(x)$
of $A$, using Theorem~\ref{thm:a3} and Lemma~\ref{lem:regular:J}, one has
$\F[x]/\langle q(x)\rangle \cong \F[A]/\langle q(A) \rangle \cong \F[A] /
\left \langle \dfrac{n}{q(k)}q(A) \right \rangle = \F[A] / \langle
\J \rangle.
$
\end{proof}

A.J. Hoffman $\&$ M.H. McAndrew~\cite{hoff} extended Lemma~\ref{lem:regular:J} to digraphs and is stated below. Note that in their paper, regular digraph were referred as strongly regular digraph.

\begin{lemma}[Hoffman $\&$McAndrew(1965)~\cite{hoff}]\label{lem:hoff}
Let $A$ be the adjacency matrix of a digraph $X$. Then there exists a polynomial $g(x) \in \Q[x]$ such that $\J = g(A)$ if and only if $X$ is strongly connected and  regular.
\end{lemma}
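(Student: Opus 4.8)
The plan is to prove both implications, with the Perron--Frobenius theorem doing the work that the orthonormal eigenbasis did in the symmetric case of Lemma~\ref{lem:regular:J}. The key point to keep in mind throughout is that, since $X$ is now a genuine digraph, $A$ need not be symmetric, diagonalizable, or have real eigenvalues, so the argument producing $\J$ as a polynomial in $A$ must avoid any appeal to an orthonormal eigenbasis.

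For the ``if'' direction, assume $X$ is strongly connected and $k$-regular. First I would record that $k$-regularity gives $A\e = k\e$ and $\e^t A = k\e^t$ (row and column sums all equal $k$), while strong connectivity makes $A$ an irreducible nonnegative matrix. By the Perron--Frobenius theorem, the spectral radius of such a matrix is a simple eigenvalue; since $\e$ is a positive eigenvector witnessing $k$ as an eigenvalue, $k$ must be this Perron root and is therefore a simple root of $\varsigma_A(x)$. Hence the minimal polynomial factors as $p_A(x) = (x-k)q(x)$ with $q(k) \ne 0$, and both the right and the left $k$-eigenspaces are one dimensional, spanned by $\e$ and $\e^t$ respectively. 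Now $(A - kI)q(A) = \o$ forces every column of $q(A)$ to lie in the right $k$-eigenspace, so every column is a scalar multiple of $\e$; symmetrically $q(A)(A-kI) = \o$ forces every row to be a multiple of $\e^t$. Together these two constraints pin down $q(A) = c\,\J$ for a single scalar $c$. Evaluating on $\e$ via $q(A)\e = q(k)\e$ and $c\,\J\e = cn\,\e$ gives $c = q(k)/n$, whence $\J = \tfrac{n}{q(k)}q(A) = g(A)$ with $g(x) = \tfrac{n}{q(k)}q(x) \in \Q[x]$.

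For the ``only if'' direction, suppose $\J = g(A)$ for some $g(x) \in \Q[x]$. Since any polynomial in $A$ commutes with $A$, we get $A\J = \J A$. Computing entries, $(A\J)_{ij}$ equals the $i$-th row sum $d^-(i)$ and $(\J A)_{ij}$ equals the $j$-th column sum $d^+(j)$; equating them for all $i,j$ shows that all out-degrees and all in-degrees coincide with a common constant $k$, i.e. $X$ is $k$-regular. For strong connectivity I would argue by contradiction: if $X$ were not strongly connected, a relabelling of the vertices (a permutation $P$) would put $A$ into block upper-triangular form with a zero lower-left block, and then every power of $P^tAP$, hence $g(P^tAP) = P^t\J P$, would also have a zero lower-left block. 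But $P^t\J P = \J$ has all entries equal to $1$, a contradiction; so $X$ is strongly connected.

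The substantive step is the ``if'' direction, and specifically the use of Perron--Frobenius to guarantee that $k$ is a \emph{simple} eigenvalue with one-dimensional left and right eigenspaces both spanned by the all-ones vector; this is exactly what replaces the symmetric-case orthonormality and makes the two-sided ``columns and rows are multiples of $\e$'' argument collapse $q(A)$ onto a scalar multiple of $\J$. The reverse direction is routine once one observes the commutation $A\J = \J A$ and invokes the block-triangular structure forced by a lack of strong connectivity.
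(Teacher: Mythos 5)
Your proof is correct, but note that the paper does not actually prove this lemma: it is quoted from Hoffman and McAndrew~\cite{hoff} without proof, the only argument given in the paper being for the undirected, symmetric special case (Lemma~\ref{lem:regular:J}), where an orthonormal eigenbasis of $\R^n$ is available. Your write-up supplies exactly the right substitute for that missing symmetry: Perron--Frobenius applied to the irreducible nonnegative matrix $A$ gives that $k$ is an algebraically simple eigenvalue, hence $p_A(x)=(x-k)q(x)$ with $q(k)\ne 0$ and with one-dimensional left and right $k$-eigenspaces spanned by $\e^t$ and $\e$; the two-sided argument $(A-kI)q(A)=\o$ and $q(A)(A-kI)=\o$ then forces $q(A)=c\,\J$, and evaluating on $\e$ fixes $c=q(k)/n$. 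This is essentially the original Hoffman--McAndrew argument, and it is strictly more work than the paper's symmetric-case proof precisely because diagonalizability and real spectra are no longer available. Your converse is also complete: $A\J=\J A$ yields constant in- and out-degrees, and the block upper-triangular form of a non-strongly-connected adjacency matrix (under a vertex relabelling) propagates a zero block through every power of $A$ and through the identity, hence through $g(A)$, contradicting $g(A)=\J$. The only point worth making explicit is why $q(x)\in\Q[x]$ (it is the quotient of the rational polynomial $p_A(x)$ by the rational linear factor $x-k$), but that is immediate.
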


In this paper, we determine the smallest order circulant matrix representation for all
subfields of cyclotomic fields.
We also determine smallest order $0,1$-companion matrix representation for cyclotomic
fields in a restricted sense. We only consider those $0,1$-companion matrices
which have $\zeta_n$ as an eigenvalue.
We begin with a review of some facts related  with the representation of cyclotomic fields and their
subfields by circulant matrices in Section~\ref{sec:represent:cyclotomic}.
The results about representations of $p$-th cyclotomic field, $p$ a prime, by
$0,1$-circulant matrices are given in Section~\ref{sec:prime}. In section
\ref{sec7} we present results on the size of the smallest order $0,1$-companion matrix representations of cyclotomic fields.

\section{Representation of cyclotomic fields and their subfields}
\label{sec:represent:cyclotomic}

We start this section with a result about the irreducible factors of the
minimal polynomial of a
companion matrix. The proof of this  result can be easily obtained using the
theory of minimal polynomials and Lemmas~\ref{lem:aa3} and
\ref{lem:la}. Hence we omit the proof.

\begin{lemma} \label{lem:a9}
Let $f(x) \in \F[x]$ be a  monic separable polynomial with irreducible
factors $q_1(x), q_2(x), \ldots, q_k(x)$ in $\F[x]$.  Suppose   $A \in
\M_n(\F)$ commutes with the companion matrix $\Cl(f)$.
\begin{enumerate}\item  Then $A=g(\Cl(f))$ for some $g(x)\in \F[x]$.
\item Let $\al_i$ be a root of $q_i(x)$ and let $\chi_{q_i,g}(x)$ be the minimal
polynomial of $g(\al_i)$ over $\F$. Then the minimal polynomial of $A$ is
the maximal square-free factor of $\prod\limits_{i=1}^k \chi_{q_i,g}(x)$.

In particular, the number of irreducible factors of the minimal polynomial of
$g(A)$ in $\F[x]$ are at most the number of irreducible factors of $f(x)$
in $\F[x]$.
\end{enumerate}
\end{lemma}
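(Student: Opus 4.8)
The plan is to reduce everything to the spectral structure of $\Cl(f)$ and then transport it through the polynomial $g$ by means of the Galois action on the roots of $f$. First I would record the two structural facts about $\Cl(f)$ that drive the argument. By Lemma~\ref{lem:la}(\ref{lem:la:1}), $f(x)$ is simultaneously the characteristic and the minimal polynomial of $\Cl(f)$; since $f$ is separable, its minimal polynomial is separable, so Lemma~\ref{lem:aa3}(\ref{lem:aa3:1}) makes $\Cl(f)$ diagonalizable, and Lemma~\ref{lem:la}(\ref{lem:la:2}) then forces the eigenvalues of $\Cl(f)$ to be distinct. These eigenvalues are precisely the roots of $f$, i.e. the roots of all the $q_i$. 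Part~1 is now immediate: since $A$ commutes with $\Cl(f)$ and the latter has distinct eigenvalues, Lemma~\ref{lem:aa3}(\ref{lem:aa3:2}) yields $A = g(\Cl(f))$ for some $g(x) \in \F[x]$.

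For part~2, writing $\Cl(f) = P D P^{-1}$ with $D$ diagonal gives $A = g(\Cl(f)) = P\, g(D)\, P^{-1}$, so $A$ is again diagonalizable and hence its minimal polynomial $p_A(x)$ is square-free. Moreover, $\Cl(f)\x = \be\x$ implies $A\x = g(\be)\x$, so the eigenvalues of $A$ are exactly the numbers $g(\be)$ as $\be$ ranges over the roots of $f$. The heart of the argument is to understand how these values group together over $\F$. Fix $i$ and let $\al_i = \al_i^{(1)}, \ldots, \al_i^{(d_i)}$, where $d_i = \deg(q_i)$, be the roots of $q_i$; in characteristic $0$ these are the complete set of conjugates of $\al_i$ over $\F$, permuted transitively by the $\F$-automorphisms of a normal closure. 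Because $g$ has coefficients in $\F$, every such automorphism $\sigma$ satisfies $\sigma(g(\al_i)) = g(\sigma(\al_i))$; as $\sigma$ runs over the Galois group, $\sigma(\al_i)$ runs over $\{\al_i^{(j)}\}$, so the set of conjugates of $g(\al_i)$ is exactly $\{g(\al_i^{(j)})\}$. Consequently these eigenvalues of $A$ form a single Galois orbit whose attached irreducible polynomial over $\F$ is $\chi_{q_i,g}(x)$, the common minimal polynomial of all of them.

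Finally I would assemble $p_A$. Since $A$ is diagonalizable, $p_A(x)$ is the product of the distinct monic irreducible polynomials over $\F$ attached to the distinct Galois orbits among the eigenvalues of $A$. By the previous paragraph these orbits are precisely the root-sets of $\chi_{q_1,g}, \ldots, \chi_{q_k,g}$, with coincidences permitted when $\chi_{q_i,g} = \chi_{q_j,g}$ for $i \neq j$. Hence $p_A$ equals the product of the \emph{distinct} $\chi_{q_i,g}$, which is exactly the maximal square-free factor of $\prod_{i=1}^{k} \chi_{q_i,g}(x)$. The ``in particular'' statement then drops out by counting: the number of distinct $\chi_{q_i,g}$ is at most $k$, the number of irreducible factors of $f$.

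I expect the main obstacle to be the bookkeeping in the key step of the second paragraph, namely verifying that the images $g(\al_i^{(j)})$ of the conjugate roots of a single $q_i$ constitute a \emph{complete} set of conjugates, so that they contribute exactly one irreducible factor $\chi_{q_i,g}$ (possibly shared with another index). The subtlety is that distinct roots $\al_i^{(j)}$ may collapse to a common value under $g$, so the correct bookkeeping is at the level of Galois orbits rather than of individual eigenvalues; the diagonalizability of $A$ is what guarantees that no factor is repeated in $p_A$ despite such collapses.
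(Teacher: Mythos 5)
Your proposal is correct and follows exactly the route the paper indicates: the paper omits the proof of this lemma, remarking only that it follows from the theory of minimal polynomials together with Lemmas~\ref{lem:aa3} and~\ref{lem:la}, and your argument supplies precisely that — diagonalizability and distinctness of eigenvalues of $\Cl(f)$ from separability, part~1 from Lemma~\ref{lem:aa3}.\ref{lem:aa3:2}, and part~2 by matching the Galois orbits of the eigenvalues $g(\al_i^{(j)})$ with the irreducible polynomials $\chi_{q_i,g}$. Your closing remark correctly identifies the one genuine subtlety (possible collapse of conjugate roots under $g$) and handles it by working with orbits and using the square-freeness forced by diagonalizability.
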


Recall that for a fixed positive integer $n$,
$\;\deg(\Phi_n(x)) = \varphi(n)$, where $\varphi(n)$ denotes the
well known Euler-totient function.  The function $\varphi(n)$ also gives the  number of integers between $1$ and $n$ that are coprime to $n$.
We omit the proof of the next result as it directly follows from Lemma~\ref{lem:a9}, Theorem \ref{thm:a3}.\ref{thm:a3:1} and the fact that $x^n-1=\sum\limits_{d|n}\Phi_d(x)$.

\begin{theorem}\label{thm:a10}
Fix a positive integer $n$ and let $A=g(W_n )$, for some  $g(x)\in \Q[x]$ with $1\leq \deg(g(x))\leq n-1$.  Also, for each divisor $d$ of $n$, let
$\chi_{\Phi_d,g}(x)$ be the minimal polynomial of $g(\ze_d)$ over $\Q$. Then
\begin{enumerate}
\item \label{thm:a10:1}
$p_A(x)$, the minimal polynomial of $A$,  is the maximal square free factor of
$\prod\limits_{d|n}\chi_{\Phi_d,g}(x)$ and
$\deg(\chi_{\Phi_d,g}(x))$ divides $\deg(\Phi_d(x))$.
\item \label{thm:a10:2} the number of irreducible factors of $p_A(x)$
is at most the number of divisors of $n$.
\item \label{thm:a10:3}  $\Q\bigl(g(\zeta_d)\bigr) \cong \Q[A]/ \langle
\chi_{\Phi_d,g}(A) \rangle.$
\end{enumerate}
Furthermore, if $n$ is a prime,
say $p$,  then the number of irreducible factors of $p_A(x)$ is exactly two.
One of the factors is of degree $1$ and the degree of the other factor is a
divisor of $\varphi(p)=p-1$.
\end{theorem}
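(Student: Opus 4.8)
The plan is to recognize $A=g(W_n)$ as a polynomial in the companion matrix $W_n=\Cl(x^n-1)$ and to apply Lemma~\ref{lem:a9} with $f(x)=x^n-1$. First I would note that $x^n-1$ is monic and separable over $\Q$, since its roots (the $n$-th roots of unity) are distinct, and that its factorization into irreducibles in $\Q[x]$ is exactly $x^n-1=\prod_{d\mid n}\Phi_d(x)$; thus the irreducible factors are indexed by the divisors $d$ of $n$, with $\ze_d$ a root of $\Phi_d(x)$. Since $A=g(W_n)$ commutes with $W_n=\Cl(f)$, the second part of Lemma~\ref{lem:a9} applies directly and shows that $p_A(x)$ is the maximal square-free factor of $\prod_{d\mid n}\chi_{\Phi_d,g}(x)$, which is the first assertion of item~\ref{thm:a10:1}. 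For the divisibility $\deg(\chi_{\Phi_d,g})\mid\deg(\Phi_d)$ I would argue by field theory: since $g(\ze_d)\in\Q(\ze_d)$, the tower $\Q\subseteq\Q(g(\ze_d))\subseteq\Q(\ze_d)$ forces $\deg(\chi_{\Phi_d,g})=[\Q(g(\ze_d)):\Q]$ to divide $[\Q(\ze_d):\Q]=\varphi(d)=\deg(\Phi_d)$.

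Item~\ref{thm:a10:2} then follows at once, because $p_A(x)$ is the maximal square-free factor of a product of the irreducible polynomials $\chi_{\Phi_d,g}(x)$, one for each divisor $d$ of $n$, and so has at most as many distinct irreducible factors as $n$ has divisors; this is precisely the ``in particular'' clause of Lemma~\ref{lem:a9}. For item~\ref{thm:a10:3}, I would observe that each $\chi_{\Phi_d,g}(x)$ is irreducible and divides $p_A(x)$, since an irreducible factor necessarily survives in the maximal square-free factor, and that it is by definition the minimal polynomial of $g(\ze_d)$; applying Theorem~\ref{thm:a3}.\ref{thm:a3:1} with $q(x)=\chi_{\Phi_d,g}(x)$ and $\al=g(\ze_d)$ then yields $\Q[A]/\langle\chi_{\Phi_d,g}(A)\rangle\cong\Q(g(\ze_d))$ directly.

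For the ``Furthermore'' statement I would specialize to $n=p$ prime, whose only divisors are $1$ and $p$. The divisor $d=1$ gives $\ze_1=1$, so $\chi_{\Phi_1,g}(x)=x-g(1)$ is a linear polynomial, while $d=p$ gives $\chi_{\Phi_p,g}(x)$, whose degree divides $\varphi(p)=p-1$ by item~\ref{thm:a10:1}. By the same item, $p_A(x)$ is the maximal square-free factor of $(x-g(1))\,\chi_{\Phi_p,g}(x)$, so the claim ``exactly two irreducible factors'' reduces to showing that these two irreducible polynomials are distinct.

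The main obstacle is exactly this distinctness, because a priori $\chi_{\Phi_p,g}(x)$ could itself be linear (precisely when $g(\ze_p)\in\Q$) and could conceivably equal $x-g(1)$. I would rule this out by proving $g(\ze_p)\ne g(1)$ for every non-constant $g$ with $\deg(g)\le p-1$. Indeed, if $g(\ze_p)=g(1)$, then $\Phi_p(x)$ divides $g(x)-g(1)$; but $x-1$ also divides $g(x)-g(1)$, and since $\Phi_p(1)=p\ne 0$ the polynomials $x-1$ and $\Phi_p(x)$ are coprime, so their product $(x-1)\Phi_p(x)$, of degree $p$, would divide $g(x)-g(1)$, a polynomial of degree at most $n-1=p-1$. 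This forces $g(x)=g(1)$, contradicting $\deg(g)\ge 1$. Hence $x-g(1)\ne\chi_{\Phi_p,g}(x)$, the two factors are distinct, and $p_A(x)$ has exactly two irreducible factors, one of degree $1$ and the other of degree dividing $p-1$.
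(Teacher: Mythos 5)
Your proposal is correct and follows essentially the same route as the paper: parts (1)--(3) are read off from Lemma~\ref{lem:a9} applied to $f(x)=x^n-1$ together with Theorem~\ref{thm:a3}, and the ``furthermore'' clause reduces to showing $g(\ze_p)\ne g(1)$, which both you and the paper obtain from the degree bound $\deg(g)\le n-1$ (you via divisibility of $g(x)-g(1)$ by $(x-1)\Phi_p(x)=x^p-1$, the paper by counting the $n$ distinct roots of $g(x)-g(1)$ --- the same idea in two guises). Your write-up also supplies the tower-law justification for $\deg(\chi_{\Phi_d,g})\mid\deg(\Phi_d)$ that the paper leaves implicit.
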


\begin{proof}
Proofs of Part~\ref{thm:a10:1},~\ref{thm:a10:2} and~\ref{thm:a10:3} are direct consequence of  Lemma~\ref{lem:a9} and Theorem~\ref{thm:a3}. For the last statement, note that $n$ is prime and hence it has exactly two factors, namely $1$ and $n$. Hence,  using Lemma~\ref{lem:a9}, it is sufficient to prove that $p_A(x)$ has at least two irreducible factors.

As $A = g(W_n)$, the eigenvalues of $A$ are $g(\ze_n^i)$ for $0 \le i \le n-1$. Now observe that for $i=0$, $g(\ze_n^0) = g(1)\in \Q$ as $g(x) \in \Q[x]$. Therefore, $(x-g(1))$ is an irreducible factor of $p_A(x)$. Also, for some $i, 1 \le i \le n-1$, if $g(\ze_n^i)\neq g(1)$  then the minimal polynomial of $g(\ze_n^i)$ is  another irreducible factor of $p_A(x)$. Hence, $p_A(x)$  has at least two irreducible factors.

Thus, we need to show that $g(\ze_n^i)= g(1)$ cannot hold true for
all $i\in\{1,2\ldots, n-1\}$. On the contrary, assume that
$g(\ze_n^i)= g(1)$ for all $i, 1 \le i \le n-1$. Define, $h(x) =
g(x) - g(1) \in \Q[x]$. Then $h(x)$ has $n$ distinct zeros,
$\ze_n^i, \; 0 \le i \le n-1$. This contradicts the definition of
$h(x)$ and the assumption that $\deg(g(x)) \le n-1$ as a polynomial
$f(x)$ has at most $\deg(f(x))$ zeros over $\C$.
\end{proof}

Theorem~\ref{thm:a10} establishes that, any field which is represented by an
$n\times n$ circulant matrix is a subfield of the $d$-th cyclotomic field
for some $d$ that divides $n$. It also describes
the correspondence between the set of all circulant matrices and the set
of all subfields of cyclotomic fields. It can also be concluded that the minimal
polynomial of every circulant matrix other than the scalar matrix
has at least two irreducible factors. One also concludes the next result and hence the proof is omitted.

\begin{cor}\label{cor:5.2}
Let $A$ be an $n\times n$ circulant matrix. Then $A$ represents a field $\F$ over $\Q$ if and only if $\F$ is a subfield of $\Q[\zeta_d]$, for some $d$ dividing $n$.
\end{cor}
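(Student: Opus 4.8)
The plan is to prove the two implications separately, reading the forward direction off Theorem~\ref{thm:a10} almost verbatim and handling the converse by explicitly constructing a representing circulant matrix with the help of the primitive element theorem. Throughout I would use the fact, already recorded in the excerpt, that every $n\times n$ circulant matrix is a polynomial in the fundamental circulant $W_n$, whose eigenvalues are the powers $\ze_n^i$, $0\le i\le n-1$.

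For the forward implication, suppose the circulant matrix $A$ represents $\F$. I would write $A=g(W_n)$ for some $g(x)\in\Q[x]$. By the definition of representation there is an irreducible factor $q(x)$ of $p_A(x)$ with $\F\cong\Q[A]/\langle q(A)\rangle$, and any root $\al$ of $q(x)$ is a root of $p_A(x)$, hence an eigenvalue of $A$. Thus $\al=g(\ze_n^i)$ for some $i$; setting $d=n/\gcd(i,n)$, the element $\ze_n^i$ is a primitive $d$-th root of unity with $d\mid n$, so $\al=g(\ze_d)$ with $g\in\Q[x]$ lies in $\Q(\ze_d)=\Q[\ze_d]$. Therefore $\F=\Q(\al)\subseteq\Q[\ze_d]$, which is exactly the assertion; this step is essentially Theorem~\ref{thm:a10}.\ref{thm:a10:3} read in the other direction.

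For the converse, suppose $\F\subseteq\Q[\ze_d]$ for some $d\mid n$. Since $\Q[\ze_d]/\Q$ is a finite extension in characteristic zero, it is separable, so by the primitive element theorem $\F=\Q(\be)$ for some $\be\in\Q[\ze_d]$. Because $\Q[\ze_d]=\Q(\ze_d)$ is already a field, I can write $\be=g(\ze_d)$ for a polynomial $g(x)\in\Q[x]$ of degree at most $\varphi(d)-1\le n-1$. Setting $A=g(W_n)$ gives a circulant matrix, and since $d\mid n$ the root $\ze_d$ equals $\ze_n^{n/d}$ and is thus an eigenvalue of $W_n$, whence $\be=g(\ze_d)$ is an eigenvalue of $A$. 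By Theorem~\ref{thm:a10}.\ref{thm:a10:3} (equivalently, directly by Corollary~\ref{c1}), the minimal polynomial $q(x)$ of $\be$ over $\Q$ is an irreducible factor of $p_A(x)$ and $\Q[A]/\langle q(A)\rangle\cong\Q(\be)=\F$, so $A$ represents $\F$. The one situation this construction misses is $\F=\Q$, where $\be\in\Q$ and $g$ would have to be constant; there I would simply take any scalar circulant matrix $cI$ with $c\in\Q$, whose unique eigenvalue $c$ has minimal polynomial $x-c$ and yields $\Q[cI]/\langle cI-cI\rangle\cong\Q$, while noting $\Q\subseteq\Q[\ze_d]$ for every $d$.

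I expect the only place requiring care is the bookkeeping in the converse: checking that $\ze_d$ really does occur among the eigenvalues of $W_n$ (this is precisely where $d\mid n$ is used, via $\ze_d=\ze_n^{n/d}$) and that the minimal polynomial of $\be$ is one of the squarefree irreducible factors of $p_A(x)$ described in Theorem~\ref{thm:a10}, so that the isomorphism in Theorem~\ref{thm:a3}.\ref{thm:a3:1} is legitimately available. Everything else is a direct transcription of the results already established, which is why the statement is offered as an immediate corollary.
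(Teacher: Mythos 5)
Your proposal is correct and follows essentially the route the paper intends: the forward direction is Theorem~\ref{thm:a10}.\ref{thm:a10:3} read off for $A=g(W_n)$, and the converse is the same construction the paper itself uses in the proof of Corollary~\ref{cor:5.3} (pick $g$ with $\F=\Q(g(\ze_d))$ and represent by a polynomial in the fundamental circulant), which is why the paper omits the proof. Your extra care about the scalar case $\F=\Q$ and about $\ze_d=\ze_n^{n/d}$ being an eigenvalue of $W_n$ is sound and fills in exactly the bookkeeping the paper leaves implicit.
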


The next result gives the smallest positive integer $d$ for which a field $\L$ over $\Q$ (as a subfield of $\Q[\ze_d]$) is represented by a circulant matrix.

\begin{cor}\label{cor:5.3}
Let $\L$ be a finite extension of $\Q$. If $d$ is the smallest positive integer such that $\L$ is a subfield of $\mathbb{Q}(\zeta_d)$ then the smallest circulant matrix representation of $\L$ is of order $d$.
\end{cor}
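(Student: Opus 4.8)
The plan is to pin the minimal order to $d$ by establishing two matching bounds: that some circulant matrix of order exactly $d$ represents $\L$, and that no circulant matrix of smaller order can. The second bound is essentially immediate from the machinery already in place, while the first requires an explicit construction, so I would treat them separately.

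For the existence half, I would start from the hypothesis $\L \subseteq \Q(\zeta_d)$. Since $\Q(\zeta_d)/\Q$ is finite and separable (characteristic zero), so is $\L/\Q$, and the primitive element theorem lets me write $\L = \Q(\beta)$ for a single $\beta \in \Q(\zeta_d)$. Because $\Q(\zeta_d)=\Q[\zeta_d]$ with $\Q$-basis $\{1,\zeta_d,\ldots,\zeta_d^{\varphi(d)-1}\}$, I can express $\beta = g(\zeta_d)$ for some $g(x)\in\Q[x]$ with $\deg g \le \varphi(d)-1 \le d-1$, and with $\deg g \ge 1$ whenever $\L \ne \Q$ (the case $\L=\Q$ being the trivial $d=1$ scalar representation). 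I would then set $A=g(W_d)$, which is an order-$d$ circulant matrix since every polynomial in the fundamental circulant $W_d$ is circulant. The eigenvalues of $A$ are the values $g(\zeta_d^{\,i})$, so $\beta=g(\zeta_d)$ is an eigenvalue of $A$, and Theorem~\ref{thm:a10}.\ref{thm:a10:3} yields $\L=\Q(\beta)=\Q(g(\zeta_d))\cong \Q[A]/\langle \chi_{\Phi_d,g}(A)\rangle$. As $\chi_{\Phi_d,g}(x)$ is the minimal polynomial of $\beta$ (hence irreducible) and divides $p_A(x)$ by Theorem~\ref{thm:a10}.\ref{thm:a10:1}, the pair $(A,\chi_{\Phi_d,g}(A))$ represents $\L$, exhibiting a circulant representation of order $d$.

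For the minimality half, I would argue by contradiction. Suppose a circulant matrix $A'$ of order $m<d$ represents $\L$. Then Corollary~\ref{cor:5.2} forces $\L$ to be a subfield of $\Q(\zeta_{d'})$ for some $d'$ dividing $m$. Consequently $d'\le m<d$, so $d'$ is a strictly smaller positive integer with $\L\subseteq\Q(\zeta_{d'})$, contradicting the choice of $d$ as the smallest such integer. Hence no circulant matrix of order below $d$ represents $\L$, and together with the construction above this shows the smallest order is exactly $d$.

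The argument is largely a bookkeeping assembly of Theorem~\ref{thm:a10} and Corollary~\ref{cor:5.2}, so there is no single deep obstacle; the point demanding the most care is the existence step. There I must confirm that the polynomial $g$ realizing a primitive element $\beta$ of $\L$ lands in the admissible degree range $1\le\deg g\le d-1$ required by Theorem~\ref{thm:a10}, and that the irreducible factor $\chi_{\Phi_d,g}(x)$ genuinely occurs in the square-free product describing $p_A(x)$, so that $A$ represents $\L$ itself rather than some conjugate or proper subfield.
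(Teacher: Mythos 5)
Your proposal is correct and follows essentially the same route as the paper: realize $\L=\Q(g(\zeta_d))$ and take $A=g(W_d)$ via Theorem~\ref{thm:a10}.\ref{thm:a10:3} for existence, then derive a contradiction with the minimality of $d$ from any smaller circulant representation. The only cosmetic difference is that you invoke Corollary~\ref{cor:5.2} for the lower bound where the paper reapplies Theorem~\ref{thm:a10}.\ref{thm:a10:3} directly to the hypothetical smaller matrix, and you spell out the primitive-element and degree-range details that the paper leaves implicit.
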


\begin{proof}
Since $\L$ is a subfield of $\mathbb{Q}(\zeta_d)$ there exists $g(x)\in \Q[x]$ such that $\L=\Q\bigl(g(\ze_d)\bigr)$.
Thus, by Theorem~\ref{thm:a10}.\ref{thm:a10:3},  $\L$ is
represented by
$A=g(W_d)$.

Now, assume that there exists a $d'\times d'$ circulant matrix
$B= h(W_{d'})$ that represents $\L$, for some $d'<d$. Let
$\chi_{\Phi_{d',h}}(x) \in \Q[x]$ be the minimal polynomial of $h(\ze_{d'})$. Then, using Theorem~\ref{thm:a10}.\ref{thm:a10:3}, one has $\L\cong
\Q[B]/\langle \chi_{\Phi_{d',h}}(B) \rangle \cong \Q[h(\zeta_{d'})]$.
That is,  $\L$ is a subfield of $\Q[\zeta_{d'}]$ as well. This contradicts
 our hypothesis that $d$ was the smallest positive integer such that
$\L$ was a subfield of $\mathbb{Q}[\zeta_d]$. Hence, one has the required
result.
\end{proof}

Let $n,m $ and $a$ be positive integers with $n = 2^a m$ and $a \ge 1$.
Then it is known that $\Q[\zeta_{m}] \cong \Q[\ze_n]$, whenever $m$ is odd and $a=1$.
Let $n=2^a\cdot m$ for some odd positive integer $m$. Then, using Corollaries~\ref{cor:5.2} and~\ref{cor:5.3}, the smallest representation of
$\Q[\ze_n]$ is of order $n$, whenever $a\ne 1$ and its order is
$\frac{n}{2}$, whenever $a=1$. The following theorem gives a $0,1$-symmetric
and circulant matrix
representation of order $n$ for  the largest subfield of $\Q[\zeta_n]$.

\begin{theorem}\label{thm:sym:represent}
Let $\de_n = \zeta_n+\zeta_n^{-1}$. \begin{enumerate} \item
\label{thm:sym:represent:1} Then $\mathbb{Q}[\de_n]$ has a symmetric $0,1$-circulant matrix representation of order $n$. \item
\label{thm:sym:represent:2} Let $\L$ be a subfield of $\mathbb{Q}[\de_n]$. Then there exists a symmetric circulant matrix that represents $\L$.
\end{enumerate}
\end{theorem}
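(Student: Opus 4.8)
The plan is to exhibit a single symmetric $0,1$-circulant matrix that drives both parts, namely the adjacency matrix of the cycle graph $C_n$. Set $A = W_n + W_n^{-1} = W_n + W_n^{n-1}$, using $W_n^n = I$. Since $A$ is a polynomial in $W_n$, it is circulant; since $W_n^t = W_n^{-1}$, we get $A^t = W_n^{-1} + W_n = A$, so $A$ is symmetric. For $n \ge 3$ the nonzero entries of $W_n$ and of $W_n^{-1}$ occupy disjoint positions, so $A$ is a genuine $0,1$-matrix (indeed the adjacency matrix of $C_n$); the degenerate cases $n=1,2$, where $\Q[\de_n] = \Q$, I would dispose of separately with a trivial symmetric $0,1$-circulant representing $\Q$ (e.g.\ the $0,1$-matrix $W_2$, whose eigenvalues $\pm 1$ are rational).

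For Part~\ref{thm:sym:represent:1}, I would read off the spectrum of $A$. Because the eigenvalues of $W_n$ are $\ze_n^j$ for $0 \le j \le n-1$, the eigenvalues of $A = g_0(W_n)$ with $g_0(x) = x + x^{n-1}$ are $\ze_n^j + \ze_n^{-j}$; taking $j=1$ shows that $\de_n = \ze_n + \ze_n^{-1}$ is an eigenvalue of $A$. Letting $q(x)$ denote the minimal polynomial of $\de_n$ over $\Q$, Corollary~\ref{c1} applied with $B = q(A)$ shows that the pair $(A, q(A))$ represents $\Q(\de_n) = \Q[\de_n]$. This is exactly the asserted symmetric $0,1$-circulant representation of order $n$. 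I would emphasize here that one must take $B = q(A)$ rather than $B = \J$: although $C_n$ is connected and $2$-regular, the quotient by $\langle \J \rangle$ collects all eigenvalues other than $2$, which for composite $n$ includes conjugates of $\de_d$ for proper divisors $d$ and hence gives a product of fields rather than $\Q[\de_n]$ alone.

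For Part~\ref{thm:sym:represent:2}, I would pass to an arbitrary subfield $\L \subseteq \Q[\de_n]$. Since the extension is separable (characteristic zero), the primitive element theorem gives $\L = \Q(\be)$ for some $\be$, and because $\be \in \Q[\de_n]$ we may write $\be = g(\de_n)$ for some $g(x) \in \Q[x]$. Now set $M = g(A)$. Being a polynomial in $W_n$, $M$ is circulant; being a polynomial in the symmetric matrix $A$, it is again symmetric (each power $A^i$ satisfies $(A^i)^t = A^i$). The eigenvalue of $M$ attached to the eigenvalue $\de_n$ of $A$ is $g(\de_n) = \be$, so $\be$ is an eigenvalue of $M$, and Corollary~\ref{c1}, with $B$ the minimal polynomial of $\be$ evaluated at $M$, yields that $M$ represents $\Q(\be) = \L$.

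The whole argument is a direct application of Corollary~\ref{c1} once the base matrix $A = W_n + W_n^{-1}$ is identified, so I anticipate no serious obstacle. The only points that genuinely need care are verifying that $A$ is $0,1$ (which forces the separate treatment of $n \le 2$, where $W_n$ and $W_n^{-1}$ overlap) and observing that both the symmetric and the circulant properties are inherited by arbitrary polynomials in $A$ — the former because powers of a symmetric matrix are symmetric, the latter because $A$ is itself a polynomial in the fundamental circulant $W_n$.
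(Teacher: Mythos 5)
Your proposal is correct and follows essentially the same route as the paper: both take $A = W_n + W_n^{-1}$ as the base symmetric $0,1$-circulant, observe that $\de_n$ is an eigenvalue, and represent a subfield $\L = \Q[g(\de_n)]$ by $g(A) = h(W_n)$ with $h(x) = g(x + x^{n-1})$. Your separate treatment of $n \le 2$ (where $W_n$ and $W_n^{-1}$ overlap and $A$ fails to be $0,1$) and your caution about not using $\J$ for composite $n$ are sensible refinements the paper leaves implicit, but they do not change the argument.
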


\begin{proof}
Proof of part~\ref{thm:sym:represent:1}: Let $\K=\mathbb{Q}[\zeta_n]$.
Then $\Q[\de_n]$ is a subfield of $\K$ and $\zeta_n$ is a zero of the polynomial $x^2-\de_n x+1 \in \Q[\de_n][x]$. So, $[\K:\Q[\de_n]]=2$.
As $A = W_n + W_n^{-1}$, $A$ is a symmetric, $0,1$-circulant matrix. Also $\de_n$ is an eigenvalue of $W_n+W_n^{n-1}=W_n + W_n^{-1} = A$. Thus, by Theorem~\ref{thm:a10}, $A$ represents $\Q[\de_n]$. This completes the proof of Part~\ref{thm:sym:represent:1}.

Proof of part~\ref{thm:sym:represent:2}: Since $\L$ is a subfield of
$\Q[\de_n]$, there exists a polynomial $g(x) \in \Q[x]$ such that $\L =
\Q[g(\de_n)]$. So $\L=\Q[h(\ze_n)]$ where $h(x)=g(x+x^{n-1})$. Hence $\L$ can be
represented by $h(W_n)=g(W_n+W_n^{n-1})=g(A)$. Clearly, $h(W_n)$ is a symmetric, circulant matrix. Thus, the required result follows.
\end{proof}

We end this subsection, by a remark that gives an improvement on the order of the matrix $A$ of Theorem~\ref{thm:sym:represent}, whenever $n$ is an even integer. It is important to note that the representation given in the next remark, need not be a circulant representation. To do this, one uses a well known result that relates the eigenvalues of a cycle graph with the eigenvalues of a path graph.

\begin{rem}[Bapat, page~$27$, \cite{bapat}] \label{rem:even:cycle}
Let $n$ be an even positive integer and let $A$ denote the adjacency
matrix of the cycle $C_n$. Then the following results are well known:
\begin{enumerate}
\item $2$ and $-2$ are eigenvalues of  $A$.
\item Let $B_m$ be the adjacency matrix of the path $X_m$, on $m$ vertices.
 Then the set of eigenvalues of $B_{n/2 -
1}$ and the set of distinct eigenvalues of $A$, different from $2$ and $-2$,
are equal.
\end{enumerate}
Thus, the subfields of $\Q[\de_n]$ can also be represented by
 $g(B_{n/2-1})$, for some polynomial $g(x) \in \Q[x]$.
\end{rem}

\subsection{Representations of prime order}\label{sec:prime}

Let $p$ be a prime and let $\K$ be a subfield of $\Q[\ze_p]$. Then it is shown
in this subsection that there exists  a zero-one circulant matrix
$A$ of order $p$ such that the pair $(A,\J)$ represents $\K$.
To do this, we define Cayley graphs/digraphs.

\begin{definition}\label{def:cay}
Let $G$ be a group and let $S$ be a non-empty subset of $G$ that does not
contain the identity element of $G$. Then the {\em Cayley digraph/graph}\index{Cayley digraph} associated
with the pair $(G,S)$, denoted ${\mbox{Cay}}(G,S)$, has the set $G$ as its
vertex set and for any two vertices $x,y \in G, \; (x,y)$ is an edge if $x
y^{-1} \in S$.
\end{definition}

Observe that ${\mbox{Cay}}(G,S)$ is a graph if and only if $S$ is closed with
respect to inverse ($S = S^{-1} = \{ s^{-1} : s \in S \}$). Also, the graph is
$k$-regular if $S$ has $k$ elements. The set $S$ is called the connection set of
the graph and it can be easily verified that the graph ${\mbox{Cay}}(G,S)$ is
connected if and only if $G = \langle S \rangle$. We also recall that a  digraph
is called a {\em circulant digraph}\index{circulant digraph} if its adjacency matrix is a circulant
matrix. The next lemma, due to Biggs,  states that every circulant digraph can
be obtained as a Cayley digraph.

\begin{lemma}[Biggs~\cite{biggs}]\label{lem:cgroup}
Consider $\Z_n$ as a cyclic group of order $n$. Then every Cayley
digraph ${\mbox{Cay}}(\Z_n,S)$ is a circulant digraph. Conversely,
every circulant digraph on $n$ vertices is ${\mbox{Cay}}(\Z_n,S)$, for some non-empty subset $S$ of $\Z_n$.
\end{lemma}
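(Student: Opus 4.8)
The plan is to prove both directions by a direct translation between the group-theoretic edge condition defining a Cayley digraph and the diagonal-constancy that characterises a circulant matrix. Throughout I identify $\Z_n$ with $\{0,1,\dots,n-1\}$ under addition modulo $n$, so that the inverse of $y$ is $-y\equiv n-y\pmod n$ and the edge condition $xy^{-1}\in S$ of Definition~\ref{def:cay} becomes $x-y\in S\pmod n$. The two technical facts I will lean on are that the shift matrix satisfies $(W_n^k)_{xy}=1$ precisely when $y\equiv x+k\pmod n$, and the characterisation recalled earlier that a matrix is circulant if and only if it is a polynomial in $W_n$.

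For the forward direction I would start from $\mbox{Cay}(\Z_n,S)$ and write down its adjacency matrix $A$, whose entry $a_{xy}$ equals $1$ exactly when $x-y\in S\pmod n$. The point to make is that $a_{xy}$ depends only on the residue $(x-y)\bmod n$, so $A$ is constant along each set $\{(x,y):x-y\equiv k\pmod n\}$, which is the defining property of a circulant matrix. To make this concrete I would exhibit the explicit expression $A=\sum_{s\in S}W_n^{\,n-s}$, obtained by noting that the out-neighbours in row $x$ are the vertices $y\equiv x-s\pmod n$ for $s\in S$ and matching $y\equiv x-s$ with $(W_n^{\,n-s})_{xy}=1$. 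Since $A$ is then a polynomial in $W_n$, it is circulant, so $\mbox{Cay}(\Z_n,S)$ is a circulant digraph.

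For the converse I would take a circulant digraph $X$ with adjacency matrix $A$ and use that $A$ is a polynomial in $W_n$, say $A=\sum_{k=0}^{n-1}c_kW_n^k$; because $A$ is a $0,1$-matrix its first row $(c_0,\dots,c_{n-1})$ lies in $\{0,1\}^n$, and since this first row is exactly the coefficient vector, each $c_k\in\{0,1\}$. I would then set $S=\{(n-k)\bmod n : 1\le k\le n-1,\ c_k=1\}$ and verify, by reversing the computation above, that $(x,y)$ is an edge of $\mbox{Cay}(\Z_n,S)$ iff $c_{(y-x)\bmod n}=1$ iff $a_{xy}=1$ iff $(x,y)\in E(X)$, giving $X=\mbox{Cay}(\Z_n,S)$.

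I do not expect a genuine obstacle here; the lemma is essentially a change of language, and the only points requiring care are bookkeeping. Specifically, I must keep the direction of the difference straight (the Cayley condition uses $x-y$, whereas the power $W_n^k$ advances the column index, forcing the sign flip $s\mapsto n-s$), and I should note that the standing assumption that the digraphs are loopless forces $c_0=0$, which is what guarantees $0\notin S$, while $S$ is nonempty precisely when $X$ has at least one edge.
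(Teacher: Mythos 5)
Your argument is correct: the translation between the edge condition $x-y\in S$ and the diagonal-constancy of the adjacency matrix, together with the identity $A=\sum_{s\in S}W_n^{\,n-s}$ and its inversion via the expansion $A=\sum_k c_kW_n^k$ with $c_k\in\{0,1\}$, is exactly the standard proof, and your bookkeeping of the sign flip and of the loopless/non-null edge cases is sound. The paper itself gives no proof of this lemma (it is quoted from Biggs), so there is nothing to compare against; your write-up supplies the omitted details correctly.
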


We now  state a result due to Turner~\cite{turner} that relates the isomorphism of two circulant graphs of
prime order with their eigenvalues.

\begin{lemma}[Turner~\cite{turner}] \label{lem:turner}
Let $X_1$ and $X_2$ be  two circulant graphs of prime order. Then they are
isomorphic if and only if they have the same set of eigenvalues. Or
equivalently,  their connection sets are equivalent.
\end{lemma}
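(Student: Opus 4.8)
The plan is to prove the chain of implications (connection sets equivalent) $\Rightarrow$ (graphs isomorphic) $\Rightarrow$ (equal eigenvalue sets) $\Rightarrow$ (connection sets equivalent), where two connection sets $S_1,S_2\subseteq\Z_p\setminus\{0\}$ are called equivalent if $S_2=aS_1$ for some $a\in\Z_p^\ast$. Here $X_i=\mathrm{Cay}(\Z_p,S_i)$ by Lemma~\ref{lem:cgroup}, and its adjacency matrix is the circulant whose eigenvalues are $\lambda_j=\sum_{s\in S}\zeta_p^{\,js}$ for $j\in\Z_p$, with $\zeta_p$ a primitive $p$-th root of unity.

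The two easy implications come first: if $S_2=aS_1$ then the group automorphism $x\mapsto ax$ of $\Z_p$ carries edges of $X_1$ to edges of $X_2$ (since $x-y\in S_1\iff ax-ay\in S_2$), so it is a graph isomorphism; and isomorphic graphs have permutation-similar adjacency matrices, hence the same spectrum. The whole content is therefore the third implication, that equal eigenvalue sets force multiplier-equivalence, and this is the step I expect to be the main obstacle.

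For the hard direction I would first exploit the Galois action. The numbers $\lambda_j$ lie in $\Q(\zeta_p)$, and the automorphism $\sigma_a:\zeta_p\mapsto\zeta_p^{\,a}$ (for $a\in\Z_p^\ast\cong\mathrm{Gal}(\Q(\zeta_p)/\Q)$) satisfies $\sigma_a(\lambda_j)=\lambda_{aj}$. Using the single linear relation among $1,\zeta_p,\dots,\zeta_p^{p-1}$ (their sum is $0$), I would show $\lambda_j=\lambda_{j'}$ exactly when $j,j'$ lie in the same coset of $H:=\{a\in\Z_p^\ast: aS=S\}$; in particular the stabilizer of $\lambda_1$ equals $H$, so the number $d$ of distinct nonprincipal eigenvalues equals the index $[\Z_p^\ast:H]$ and each occurs with multiplicity $|H|$. (For prime $p$ every nonempty $S_i$ generates $\Z_p$, so $X_i$ is connected and $k=|S_i|$ is the simple largest eigenvalue, common to both spectra.) Since $p$ is prime, $\Z_p^\ast$ is cyclic and has a unique subgroup of index $d$; hence equal eigenvalue sets give $d_1=d_2$ and force $H_1=H_2=:H$, so both $S_1$ and $S_2$ are unions of cosets of the same $H$.

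Finally I would linearize. Writing $\Z_p^\ast=\langle g\rangle$, $H=\langle g^{d}\rangle$ with cosets $C_0,\dots,C_{d-1}$ and Gaussian periods $\eta_t=\sum_{a\in C_t}\zeta_p^{\,a}$, each connection set corresponds to a subset $I\subseteq\Z_d$ via $S=\bigcup_{i\in I}C_i$, and a direct computation gives the nonprincipal eigenvalues as the cyclic shifts $\nu_t=\sum_{i\in I}\eta_{t+i}$, $t\in\Z_d$; multiplier-equivalence of the $S$'s corresponds precisely to a cyclic shift of $I$. Equality of eigenvalue sets then yields $\nu_0^{(2)}=\nu_r^{(1)}$ for some $r$, that is $\sum_{i\in I_2}\eta_i=\sum_{i\in I_1+r}\eta_i$. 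The crux is the linear independence of $\eta_0,\dots,\eta_{d-1}$ over $\Q$, which I would again derive from the single all-ones relation among the powers of $\zeta_p$ (a vanishing $\{-1,0,1\}$-combination of the $\eta_t$ pulls back to a vanishing $\{-1,0,1\}$-combination of the $\zeta_p^{\,a}$, $a\neq 0$, hence is trivial). This independence forces $I_2=I_1+r$, i.e. $S_2=g^{\,r}S_1$, closing the cycle. The main difficulty is organizing the Galois/coset bookkeeping so that ``same spectrum'' reduces cleanly to ``same cyclic shift,'' and the place where primality is essential is the uniqueness of the index-$d$ subgroup of the cyclic group $\Z_p^\ast$.
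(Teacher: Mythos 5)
The paper offers no proof of this lemma; it is quoted from Turner's 1967 paper as a known citation, so there is no internal argument to compare yours against. Judged on its own, your proposal is a correct and essentially complete proof, and it follows the classical spectral route (Galois action plus Gaussian periods) that underlies Turner's theorem. The two easy implications are handled correctly, and the key points of the hard direction all check out: for $j\neq 0$ the identity $\lambda_j=\lambda_{aj}$ is equivalent to $aS=S$ because two $0,1$-subsets of $\{1,\dots,p-1\}$ with equal $\zeta_p$-sums must coincide (the difference of their indicator polynomials has degree at most $p-1$, constant term $0$, and is divisible by $1+x+\cdots+x^{p-1}$, hence vanishes); the uniqueness of the index-$d$ subgroup of the cyclic group $\Z_p^{*}$ forces $H_1=H_2$, which is exactly where primality enters; and the linear independence of the periods $\eta_0,\dots,\eta_{d-1}$ against $\{-1,0,1\}$ coefficient vectors (by the same minimal-polynomial argument) converts equality of a single pair of nonprincipal eigenvalues into the cyclic shift $I_2=I_1+r$, i.e.\ $S_2=g^{r}S_1$. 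The one point you leave implicit is that when you match $\nu_0^{(2)}$ to an eigenvalue of $X_1$ it cannot be the principal eigenvalue $k$: each $\zeta_p^{s}$ with $s\neq 0$ has real part strictly less than $1$, so $\nu_0^{(2)}$ has real part strictly less than $|S_2|=k$ and must therefore equal some $\nu_r^{(1)}$. That loose end is trivial to close, and with it the argument is sound.
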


Before proving  a couple of results, we recall the following facts. These facts
are not stated in the present form but they can be obtained from
the results stated on  Pages~$554,577$ of Dummit $\&$ Foote~\cite{D:F}.

\begin{fact}[Dummit $\&$ Foote, Pages~$554, 577$~\cite{D:F}] \label{fact:unique}
Let $p$ be a prime. Then \begin{enumerate} \item \label{fact:unique:1} the
polynomial $f(x) = 1 + x + \cdots + x^{p-1}$ is irreducible over $\Q$.
\item \label{fact:unique:2} the Galois group of $\Q[\ze_p] $ over $\Q$ is
isomorphic to $\Z_p^*$, a cyclic group of order $p-1$. \item
\label{fact:unique:3}  for each divisor $d$ of  $p-1$, $\Z_p^*$ has a
unique subgroup of order $d$ and there exists a unique subfield of $\Q[\ze_p]$ whose degree of extension over $\Q$ is $d$.
\end{enumerate}
\end{fact}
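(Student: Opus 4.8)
The plan is to establish the three parts in the order listed, since each relies on its predecessor. For Part~\ref{fact:unique:1}, I would prove irreducibility of $f(x) = \Phi_p(x) = (x^p-1)/(x-1)$ by Eisenstein's criterion. Since Eisenstein does not apply to $f$ directly, I would first substitute $x \mapsto x+1$, obtaining
$$f(x+1) = \frac{(x+1)^p - 1}{x} = \sum_{k=1}^{p} \binom{p}{k} x^{k-1} = x^{p-1} + \binom{p}{p-1} x^{p-2} + \cdots + \binom{p}{2} x + p.$$
The leading coefficient is $1$, every non-leading coefficient $\binom{p}{k}$ with $1 \le k \le p-1$ is divisible by $p$, and the constant term $\binom{p}{1} = p$ is not divisible by $p^2$. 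Hence Eisenstein at the prime $p$ gives irreducibility of $f(x+1)$ over $\Q$, and an irreducible shift forces $f(x)$ itself to be irreducible.

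For Part~\ref{fact:unique:2}, I would note that $\Q[\ze_p]$ is the splitting field over $\Q$ of the separable polynomial $x^p-1$, so $\Q[\ze_p]/\Q$ is Galois. By Part~\ref{fact:unique:1}, $[\Q[\ze_p]:\Q] = \deg f = p-1$, so $|\mbox{Gal}(\Q[\ze_p]/\Q)| = p-1$. Any automorphism $\sigma$ is determined by $\sigma(\ze_p)$, which must again be a primitive $p$-th root of unity, i.e. $\sigma(\ze_p) = \ze_p^{a}$ for some $a \in \{1,\ldots,p-1\}$. The assignment $\sigma \mapsto (a \bmod p)$ is an injective homomorphism into $\Z_p^*$, and a comparison of orders (both equal to $p-1$) upgrades it to an isomorphism. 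Finally $\Z_p^*$ is the multiplicative group of the field $\F_p$, hence cyclic, which completes this part.

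For Part~\ref{fact:unique:3}, the cyclicity from Part~\ref{fact:unique:2} yields, by the elementary structure theory of cyclic groups, a unique subgroup of each order dividing $p-1$. I would then invoke the Fundamental Theorem of Galois Theory, which furnishes an inclusion-reversing bijection between subgroups $H$ of $G = \mbox{Gal}(\Q[\ze_p]/\Q)$ and the intermediate fields $\L$ with $\Q \subseteq \L \subseteq \Q[\ze_p]$, satisfying $[\L:\Q] = [G:H]$. A subfield $\L$ with $[\L:\Q] = d$ therefore corresponds to a subgroup $H = \mbox{Gal}(\Q[\ze_p]/\L)$ of order $(p-1)/d$; since $G$ is cyclic there is exactly one such subgroup, and hence exactly one such subfield. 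This gives simultaneously the uniqueness of the order-$d$ subgroup of $\Z_p^*$ and the uniqueness of the degree-$d$ subfield.

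I expect Part~\ref{fact:unique:1} to be the main obstacle, not because Eisenstein is hard but because everything downstream hinges on the exact value $[\Q[\ze_p]:\Q] = p-1$: without it one cannot conclude that the map $\sigma \mapsto (a \bmod p)$ is surjective onto $\Z_p^*$, and the counting argument in Part~\ref{fact:unique:3} collapses. The one genuinely non-obvious maneuver is the shift $x \mapsto x+1$ that makes Eisenstein applicable; Parts~\ref{fact:unique:2} and~\ref{fact:unique:3} are then routine consequences of the Galois correspondence together with the cyclicity of $\Z_p^*$.
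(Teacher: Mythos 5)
Your proof is correct and complete. The paper does not prove this statement at all---it is recorded as a \emph{Fact} imported from Dummit \& Foote---and the argument you give (Eisenstein after the shift $x \mapsto x+1$, the isomorphism $\mbox{Gal}(\Q[\ze_p]/\Q) \cong \Z_p^*$ via $\sigma \mapsto a$ where $\sigma(\ze_p)=\ze_p^a$, and the Galois correspondence combined with the uniqueness of subgroups of a cyclic group) is precisely the standard one found in that reference, so there is nothing to reconcile.
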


\begin{lemma}\label{lem:a13}
Let  $p$ be a  prime number and let $k$  be any factor of $p-1$.
Then the edge set of  $K_p=(\Z_p,E)$, the complete graph on $p$ vertices, can be
partitioned into $k$  subsets $E_0,E_1, \ldots, E_{k-1}$ such
that the digraphs $X_i=(\Z_p,E_i)$, for $0 \le i \le k-1$, are $r$-regular circulant digraphs, where $r=\frac{p-1}{k}$. Moreover, the digraphs $X_i$ and $X_j$, for $0 \le i < j \le k-1$, are isomorphic.
\end{lemma}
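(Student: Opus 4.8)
Looking at this lemma, I need to prove that the complete graph $K_p$ on $\mathbb{Z}_p$ vertices can be edge-partitioned into $k$ isomorphic $r$-regular circulant digraphs, where $r = (p-1)/k$ and $k \mid (p-1)$.

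Let me think about the structure. The complete graph on $\mathbb{Z}_p$ has an edge between every pair of distinct vertices. As a Cayley digraph, $K_p = \text{Cay}(\mathbb{Z}_p, S)$ where $S = \mathbb{Z}_p^* = \{1, 2, \ldots, p-1\}$ (the nonzero elements). The edge $(x,y)$ exists iff $x - y \in S$, which holds for all $x \neq y$.

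The key insight: $\mathbb{Z}_p^*$ is a cyclic group of order $p-1$. Since $k \mid (p-1)$, let $H$ be the unique subgroup of index $k$ (order $r = (p-1)/k$). Then $\mathbb{Z}_p^*$ decomposes into $k$ cosets of $H$.

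The plan:
- Let $g$ be a generator (primitive root) of $\mathbb{Z}_p^*$. The subgroup $H = \{g^{0}, g^{k}, g^{2k}, \ldots\}$ of order $r$. The cosets are $g^i H$ for $i = 0, \ldots, k-1$.
- Define connection sets $S_i = g^i H$, and let $E_i$ be the edges of $\text{Cay}(\mathbb{Z}_p, S_i)$.
- Since the cosets partition $\mathbb{Z}_p^*$, the edge sets $E_i$ partition $E(K_p)$.
- Each $X_i = (\mathbb{Z}_p, E_i)$ is $r$-regular (since $|S_i| = r$) and circulant (by Biggs).
- Isomorphism: multiplication by $g^{j-i}$ maps $S_i$ to $S_j$, giving a graph isomorphism.

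The main subtlety is the isomorphism claim. Multiplication by a unit $c \in \mathbb{Z}_p^*$ is a bijection of $\mathbb{Z}_p$; it sends edge $(x,y)$ with $x-y \in S_i$ to $(cx, cy)$ with $cx - cy = c(x-y) \in cS_i$. Taking $c = g^{j-i}$ gives $cS_i = g^{j-i}g^i H = g^j H = S_j$, so it's an isomorphism $X_i \to X_j$.

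I should verify these $X_i$ are genuinely digraphs (not graphs) unless $H$ is inverse-closed, and check the $r$-regularity carefully.

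=== MY PROOF PROPOSAL ===

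The plan is to realize $K_p$ as a Cayley digraph on the group $\Z_p$ and then decompose its connection set using the coset structure of the cyclic group $\Z_p^*$. Observe first that the complete graph $K_p$ on the vertex set $\Z_p$ is exactly $\mbox{Cay}(\Z_p, S)$ with connection set $S = \Z_p^* = \{1, 2, \ldots, p-1\}$, since $(x,y)$ is an edge precisely when $x - y$ is nonzero in $\Z_p$. By Fact~\ref{fact:unique}, $\Z_p^*$ is cyclic of order $p-1$; fix a generator (primitive root) $g$. Since $k \mid (p-1)$, the subgroup $H = \langle g^k \rangle = \{ g^{jk} : 0 \le j \le r-1\}$ has order $r = \frac{p-1}{k}$, and its $k$ distinct cosets $S_i = g^i H$, for $0 \le i \le k-1$, partition $\Z_p^*$.

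Next I would set $E_i$ to be the edge set of $X_i = \mbox{Cay}(\Z_p, S_i) = (\Z_p, E_i)$. By Lemma~\ref{lem:cgroup}, each $X_i$ is a circulant digraph. Because the cosets $S_i$ form a disjoint partition of $S = \Z_p^*$, and the edge $(x,y)$ of $K_p$ belongs to $E_i$ if and only if $x - y \in S_i$, the sets $E_0, E_1, \ldots, E_{k-1}$ partition the full edge set $E$ of $K_p$. Regularity follows immediately: for any vertex $x$, its out-neighbours in $X_i$ are $\{ x - s : s \in S_i\}$ and its in-neighbours are $\{ x + s : s \in S_i \}$, each of cardinality $|S_i| = r$, so $d^+(x) = d^-(x) = r$ and $X_i$ is $r$-regular.

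For the isomorphism claim, the idea is that multiplication by a unit is a graph automorphism of $\Z_p$ that permutes the connection sets transitively. Given $0 \le i < j \le k-1$, let $c = g^{j-i} \in \Z_p^*$ and define $\sigma : \Z_p \to \Z_p$ by $\sigma(x) = cx$. This map is a bijection, and $(x,y) \in E_i$ means $x - y \in S_i$, whence $\sigma(x) - \sigma(y) = c(x-y) \in c S_i = g^{j-i} g^i H = g^j H = S_j$, so $(\sigma(x), \sigma(y)) \in E_j$; the same computation in reverse shows $\sigma$ carries $E_j$ back to $E_i$. Thus $\sigma$ is a digraph isomorphism $X_i \cong X_j$, completing the argument.

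The step I expect to require the most care is confirming that scalar multiplication by $c$ genuinely respects the directed edge structure in both directions — that is, that $\sigma$ is not merely a vertex bijection sending some edges of $X_i$ into $X_j$ but an honest isomorphism of digraphs with $\sigma(E_i) = E_j$ exactly. This hinges on the clean multiplicative identity $c S_i = S_j$, which is where the coset structure of the cyclic group $\Z_p^*$ does all the work; once that identity is in hand, the edge correspondence is forced. A minor point worth flagging is that the $X_i$ are digraphs rather than (undirected) graphs unless $H = H^{-1}$, so the regularity statement is correctly phrased in terms of $d^+$ and $d^-$, consistent with the lemma's wording.
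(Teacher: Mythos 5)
Your proposal is correct and follows essentially the same route as the paper: fix a primitive root, take the index-$k$ subgroup $H$ of $\Z_p^*$ and its cosets as connection sets of Cayley digraphs on $\Z_p$, and exhibit the isomorphisms by scalar multiplication by a power of the generator. The only (immaterial) differences are that you map $X_i$ to $X_j$ directly via multiplication by $g^{j-i}$ where the paper reduces to showing $X_0\cong X_j$, and a harmless difference in the sign convention for when $(x,y)$ is an edge.
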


\begin{proof}
 Let $\al$ be a generator of $\Z_p^*$. Then $H=\langle \alpha^{k}\rangle = \{1,
\al^k, \ldots, \al^{k(r-1)}\}$ is
a subgroup of $\Z_p^*$ having $r$ elements and let $H_j=\alpha^j H$, for  $j = 0,
1, \ldots, k-1$, be the cosets of $H$ in $\Z_p^*$ with $H_0 = H$.
It is important to note that $H_j$, as a subset of $\Z_p$, generates $\Z_p$, for
each $j = 0,1,\ldots, k-1$. Let us now define
a digraph $X_j$ by having $\Z_p$ as its vertex set  and  any two vertices $x, y \in \Z_p$, $(x,y)$ is an edge in $X_j$ if and only if $y-x \in H_j$. Then  it is easy to verify that $X_j$ is an $r$-regular Cayley digraph, ${\mbox{Cay}}(\Z_p,H_j)$. Also, observe that if we define   $A_j=\sum\limits_{h\in H_j}W_p^h$, for
$ 0 \le j \le k-1$, then $A_j$ is a $0,1$-circulant matrix and is the adjacency matrix of $X_j$.

Since the cosets $H_j$, for $0 \le j \le k-1$, are disjoint, one has
obtained $k$ disjoint digraphs that are $r$-regular and this completes the
proof of the first part.

We now need to show that the $k$ digraphs,
$X_j$, for $ 0 \le j \le k-1$, are mutually isomorphic. We
will do so by proving that the digraphs $X_0$ and $X_j$ are isomorphic,
for $1 \le j \le k-1$.

Let us define a map $\psi: \; V(X_0)\rightarrow V(X_j)$ by $\psi(s)=\al^{j}s$
for each $s \in V(X_0)$. Then it can be easily verified that $\psi$ is one-one
and onto. Thus, we just need to show that $\psi\bigl( (x,y)
\bigr) $ is an edge in $X_j$ if and only if  $(x,y) $ is an edge in $X_0$.
Or equivalently, we need to show that
$\psi(y) - \psi(x) \in H_j$ if and only if  $y-x \in H_0=H$. And this holds true
as $$y-x \in H \Leftrightarrow \al^j(y-x)\in H_j \Leftrightarrow (\al^j y-\al^j x)\in H_j \Leftrightarrow \psi(y)-\psi(x)\in H_j.$$ This completes the proof of the lemma.
\end{proof}

Before coming to the main result of this section, we have the following remark.
\begin{rem}\label{rem:hj} Let $p$ be a prime and let the cyclic group $\Z_p^* = \langle \al \rangle$ and its cosets $H_j = \al^j H$, for $0 \le j \le k-1$, be defined as in Lemma~\ref{lem:a13}. Then
\begin{enumerate}
\item \label{rem:hj:1}  using  Fact~\ref{fact:unique}.\ref{fact:unique:3},  the Cayley digraphs, $X_0, X_1, \ldots, X_{k-1}$,  constructed in the proof of Lemma~\ref{lem:a13} are unique.
\item \label{rem:hj:2} for a fixed $j$, $0 \le j \le k$, we observe the following.
\begin{enumerate}
\item \label{rem:hj:2:1} For each $h \in H, h H_j = H_j$. That is, for each $t, 0 \le t \le r-1$, $\al^{t k} H_j = H_j$. That is, $\al^{j} H = \al^{j + t k} H$, for all $t, 0 \le t \le r-1$.
\item \label{rem:hj:2:2} Let $\ze_p$ be a primitive $p$-th root of unity. Then, for each  $t, 0 \le t \le r-1$, $$\sum\limits_{h \in H} \bigl(\ze_p^{\al^j}\bigr)^h = \sum\limits_{s \in H_j} \ze_p^s = \sum\limits_{h \in H} \bigl(\ze_p^{\al^{j+ t k}}\bigr)^h.$$
\end{enumerate}
\end{enumerate}
\end{rem}
We are now ready to state and prove the main result of this section.

\begin{theorem} \label{thm:a14}
Let $p$ be a prime and let $\L$ be a subfield  of  $\Q[\zeta_p]$. Then
there exists a circulant digraph on $p$ vertices whose adjacency matrix
represents $\L$.
\end{theorem}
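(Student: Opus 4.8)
The plan is to realize $\L$ as $\Q(\eta)$ for a suitable Gaussian period $\eta$ that appears as an eigenvalue of one of the circulant Cayley digraphs built in Lemma~\ref{lem:a13}. First I would fix the degree. Since $\L$ is a subfield of $\Q[\ze_p]$ and, by Fact~\ref{fact:unique}.\ref{fact:unique:2}, the group $\mathrm{Gal}(\Q[\ze_p]/\Q) \cong \Z_p^*$ is cyclic of order $p-1$, the Galois correspondence forces $d := [\L:\Q]$ to divide $p-1$, and by Fact~\ref{fact:unique}.\ref{fact:unique:3} the field $\L$ is \emph{the} unique subfield of $\Q[\ze_p]$ of degree $d$. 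I would then apply Lemma~\ref{lem:a13} with $k = d$, producing the subgroup $H = \langle \al^k \rangle$ of order $r = (p-1)/k$ and the $r$-regular circulant digraph $X_0 = \mbox{Cay}(\Z_p, H)$, whose adjacency matrix is the $0,1$-circulant matrix $A_0 = \sum_{h \in H} W_p^h = g(W_p)$ with $g(x) = \sum_{h \in H} x^h$. This $X_0$ is the candidate digraph.

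Next I would read off the spectrum of $A_0$. As $A_0 = g(W_p)$, its eigenvalues are $g(\ze_p^i)$ for $0 \le i \le p-1$. For $i = 0$ this is $g(1) = |H| = r$, while for $i \in \Z_p^*$ lying in the coset $H_j = \al^j H$, Remark~\ref{rem:hj}.\ref{rem:hj:2:2} shows that $g(\ze_p^i) = \eta_j := \sum_{h \in H} \ze_p^{\al^j h}$ depends only on the coset index $j$; these are precisely the $k$ Gaussian periods attached to $H$. Since $1 \in H$, the eigenvalue attached to $\ze_p$ itself is $\eta_0 = g(\ze_p) = \sum_{h \in H} \ze_p^h$.

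Finally I would identify $\Q(\eta_0)$ with $\L$ and invoke Theorem~\ref{thm:a10}. Under the action $\sigma_a(\ze_p) = \ze_p^a$ one computes $\sigma_a(\eta_0) = \sum_{s \in aH} \ze_p^s$, which equals $\eta_0$ exactly when $aH = H$, i.e. when $a \in H$; hence the stabilizer of $\eta_0$ in $\Z_p^*$ is $H$, its orbit $\{\eta_0, \ldots, \eta_{k-1}\}$ consists of the $[\Z_p^* : H] = k$ distinct conjugates of $\eta_0$, and $\Q(\eta_0)$ is the fixed field of $H$, of degree $k = d$. By the uniqueness in Fact~\ref{fact:unique}.\ref{fact:unique:3} this forces $\Q(\eta_0) = \L$. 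As $\eta_0 = g(\ze_p)$ is an eigenvalue of $A_0$, Theorem~\ref{thm:a10}.\ref{thm:a10:3} then yields $\Q[A_0]/\langle \chi_{\Phi_p,g}(A_0) \rangle \cong \Q(\eta_0) = \L$, so the adjacency matrix $A_0$ of the circulant digraph $X_0$ represents $\L$; concretely the minimal polynomial of $A_0$ splits as $(x-r)\,\chi_{\Phi_p,g}(x)$ with $\deg \chi_{\Phi_p,g} = d$. Since $H$ generates $\Z_p$, the digraph $X_0$ is strongly connected and regular, so by Lemma~\ref{lem:hoff} one may even write $\J$ as a polynomial in $A_0$ and phrase the representation through the pair $(A_0,\J)$.

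The step I expect to be the main obstacle is the \emph{exact} identification $\Q(\eta_0) = \L$: one must check that the Gaussian period $\eta_0$ generates precisely the degree-$d$ subfield, collapsing neither to a proper subfield nor to all of $\Q[\ze_p]$. The decisive point is that the Galois stabilizer of $\eta_0$ is \emph{exactly} $H$ --- equivalently, that the conjugates $\eta_0, \ldots, \eta_{k-1}$ are genuinely distinct, which follows because the only $\Q$-linear relation among $1, \ze_p, \ldots, \ze_p^{p-1}$ is $\sum_{i=0}^{p-1} \ze_p^i = 0$. Combined with the uniqueness of degree-$d$ subfields in the cyclic setting of Fact~\ref{fact:unique}.\ref{fact:unique:3}, this makes the identification precise.
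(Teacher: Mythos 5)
Your proof is correct and follows essentially the same route as the paper: build the Cayley digraph of Lemma~\ref{lem:a13} from a subgroup $H$ of $\Z_p^*$, identify the nontrivial eigenvalues of its adjacency matrix with the Gaussian periods attached to the cosets of $H$, and conclude via the uniqueness of subfields in Fact~\ref{fact:unique}.\ref{fact:unique:3}. If anything, your bookkeeping is the cleaner one --- you take $H$ of index $d=[\L:\Q]$ so that the irreducible factor $\chi_{\Phi_p,g}$ has degree exactly $d$, and you explicitly justify the distinctness of the $k$ periods via the linear independence of $\ze_p,\dots,\ze_p^{p-1}$, two points that the paper's write-up (which interchanges the roles of $r$ and $k$ in its final step) leaves implicit.
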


\begin{proof}
Let $ r =[\L: \mathbb{Q}]$. Then $r$ divides $p-1 = [\mathbb{Q}[\zeta_p]:
 \mathbb{Q}]$, as $\Q \subset \L \subset \mathbb{Q}[\zeta_p]$.  Let
$k=\frac{p-1}{r}$.   We now claim  the existence of a $0,1$-circulant matrix
$A$ of order $p$ whose minimal polynomial has an irreducible factor of degree
$r$.

As $k$ divides $p-1$,  Lemma~\ref{lem:a13}, gives us a collection, say $X_0, X_1,
\ldots, X_{k-1}$,  of $r$-regular circulant digraphs on $p$ vertices that are
mutually isomorphic. Let $A$ be the adjacency matrix of $X_0$. Then, using the
definition of $X_0$, its adjacency matrix  $A=\sum\limits_{h\in H} W_p^h$. Thus, $A$ is
a circulant matrix and hence diagonalizable. Thus, we just need to find the
eigenvalues of $A$ to get the minimal polynomial of $A$.  By  definition,  the eigenvalues of $A$ are $\lambda_i=
\sum\limits_{h\in H} (\ze_p^i)^{h}$, for $0 \le i \le p-1$. Observe
that $|\la_i| \le r$ and $\la_i = r$ if and only if $i = 0$. Fix an $i \in \{1, 2, \ldots, p-1\}$. Then, $i \in H_j$, for some coset $H_j$, $0 \le j \le k-1$, of $\Z_p^*$. Therefore, using Remark~\ref{rem:hj}.\ref{rem:hj:2}, we see that
$\la_{\al^j} = \la_{\al^{j+k}} = \cdots = \la_{\al^{j+ (r-1) k}}$, for each $j \in \{0, 1, 2, \ldots, k-1\}$. That is, for each fixed $j \in \{0, 1, \ldots, k-1\}$ and  $s, t \in H_j$, $\la_s = \la_t$.
Thus,  $A$ has exactly $k$ distinct eigenvalues other than  the eigenvalue $r$.
Also, note that $A$ is  a circulant matrix of order $p$, a prime.
Therefore, by Theorem~\ref{thm:a10}.\ref{thm:a10:2}, the minimal
polynomial of $A$ factors into two distinct irreducible factors. One of
the factor is  $x-r$, corresponding to the simple eigenvalue $r$ of $A$ and the
other must contain all the distinct eigenvalues of $A$, different from $r$.
Hence, the minimal polynomial of $A$ equals $(x-r) \prod\limits_{i=1}^{k}
(x - \la_i) = (x-k) q(x) \in \Q[x].$

As $\deg(q(x)) = k$, the $0,1$-circulant matrix $A$ represents a subfield, say
$\K$, of $\Q[\ze_p]$ such that $[\K: \Q[\ze_p] ] = k.$ Thus, the proof of the
claim is complete.

Now, using Fact~\ref{fact:unique}.\ref{fact:unique:3}, the subfield $\K$ is
indeed the subfield $\L$.
\end{proof}

We have seen that if $p$ is a prime then $\Q[\ze_p]$ has a unique subfield
for each divisor $d$ of $p-1$. But all the real subfields of $\Q[\ze_p]$
are also subfields of $\mathbb{Q}(\zeta_p+\zeta_p^{-1})$. This observation
leads to the last result of this section.

\begin{cor}
Let $p$ be a prime number. Then every real subfield of
$\mathbb{Q}[\zeta_p]$ has a symmetric $0,1$-circulant matrix
representation of order $p$.
\end{cor}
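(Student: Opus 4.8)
The plan is to show that the circulant representation of $\L$ produced by Theorem~\ref{thm:a14} can, in the real case, be taken symmetric, and that symmetry is forced \emph{precisely} by the hypothesis that $\L$ is real. First I would recall, via Fact~\ref{fact:unique}.\ref{fact:unique:2}, the identification of the Galois group $\mathrm{Gal}(\Q[\ze_p]/\Q)$ with $\Z_p^*$, under which the automorphism $\sigma_a:\ze_p\mapsto\ze_p^a$ corresponds to $a\in\Z_p^*$. In particular complex conjugation is $\sigma_{-1}$, corresponding to $-1\in\Z_p^*$, and its fixed field is exactly $\Q[\de_p]$, where $\de_p=\ze_p+\ze_p^{-1}$. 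Thus $\L$ is real if and only if $\L\subseteq\Q[\de_p]$, if and only if $\sigma_{-1}$ fixes $\L$ pointwise.

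Next I would set up the representing matrix as in Lemma~\ref{lem:a13} and Theorem~\ref{thm:a14}. Writing $H$ for the subgroup of $\Z_p^*$ with $\mathrm{Gal}(\Q[\ze_p]/\L)=H$, the field $\L$ is generated by the Gaussian period $\eta=\sum_{h\in H}\ze_p^h$, and $\eta$ is an eigenvalue of the $0,1$-circulant matrix $A=\sum_{h\in H}W_p^h$. By Theorem~\ref{thm:a14} this $A$ is a circulant matrix of order $p$ that represents $\L$. The only remaining task is therefore to verify that this particular $A$ is symmetric exactly when $\L$ is real.

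The symmetry test is a short circulant computation. Since $W_p^{\,t}=W_p^{-1}=W_p^{\,p-1}$, we get $A^t=\sum_{h\in H}W_p^{-h}=\sum_{h\in -H}W_p^h$, so $A=A^t$ if and only if $-H=H$ as subsets of $\Z_p$. As $H$ is a subgroup, $-H=H$ holds if and only if $-1\in H$. Combining with the first paragraph, $-1\in H=\mathrm{Gal}(\Q[\ze_p]/\L)$ says precisely that complex conjugation fixes $\L$, i.e. that $\L$ is real; equivalently, in divisor terms, $-1\in H$ amounts to $2\mid |H|=(p-1)/[\L:\Q]$, that is $[\L:\Q]\mid (p-1)/2$, which by Fact~\ref{fact:unique}.\ref{fact:unique:3} is exactly the condition $\L\subseteq\Q[\de_p]$. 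Hence $A$ is a symmetric $0,1$-circulant matrix of order $p$ representing $\L$.

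The routine pieces (the transpose identity for powers of $W_p$, and the standard fact that $\eta$ generates $\L$) cause no trouble. The one genuine point to get right, and the main obstacle, is the clean matching of the two equivalences: that \emph{inverse-closure} of the connection set, $H=-H$, coincides with \emph{realness} of $\L$, through the single element $-1$ playing the dual role of complex conjugation in the Galois group and of the inverse-closure generator of the circulant. Care is also needed to use the subgroup $H$ of order $(p-1)/[\L:\Q]$, whose period actually generates $\L$, rather than its complementary-order subgroup, so that the symmetry condition lands on $\L$ being real and not on an unrelated parity of $[\L:\Q]$.
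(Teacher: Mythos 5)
Your proof is correct and follows exactly the route the paper intends: the paper leaves the corollary unproved, merely observing that real subfields of $\Q[\zeta_p]$ lie in $\Q[\zeta_p+\zeta_p^{-1}]$, and your argument supplies the missing details by showing that the $0,1$-circulant $A=\sum_{h\in H}W_p^h$ from Theorem~\ref{thm:a14} is symmetric because $-1\in H$ whenever $|H|=(p-1)/[\L:\Q]$ is even. You in fact prove slightly more than is asked (the ``only if'' direction, that symmetry of this $A$ forces $\L$ to be real), and you are right to flag the choice of $H$ as the Galois group of $\Q[\zeta_p]/\L$ rather than the subgroup of order $[\L:\Q]$ --- a point on which the paper's own proof of Theorem~\ref{thm:a14} is notationally careless.
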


Let $p$ be a prime and consider the digraph $X_0$ in the proof of
Theorem~\ref{thm:a14}. Since $p$ is a prime, it can be easily verified that
$X_0$ is a strongly connected regular digraph.   Hence, using
Lemma~\ref{lem:hoff}, one immediately obtains  the following result and hence
the proof is omitted.

\begin{cor} \label{a15}
Let  $\L$ be  a subfield  of $\mathbb{Q}[\zeta_p]$. Then there exists a
$0,1$-circulant matrix $A$ of order $p$ such that $(A,\J)$ represents
$\L$.
\end{cor}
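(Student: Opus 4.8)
The plan is to take the very matrix $A$ constructed in the proof of Theorem~\ref{thm:a14} and show that for this $A$ the two ideals $\langle \J\rangle$ and $\langle q(A)\rangle$ coincide in $\Q[A]$. Since that proof already establishes $\Q[A]/\langle q(A)\rangle \cong \L$, identifying the ideals immediately yields $\Q[A]/\langle \J\rangle \cong \L$, which is exactly the assertion that $(A,\J)$ represents $\L$. Recall that $A = \sum_{h\in H} W_p^{h}$ is a $0,1$-circulant matrix of order $p$ whose minimal polynomial is $(x-r)q(x)$, where $r=|H|$ is the common degree of the Cayley digraph $X_0=\mathrm{Cay}(\Z_p,H)$, the all-ones vector $\e$ is the eigenvector for the simple eigenvalue $r$, and $q(x)$ is the irreducible factor whose roots are precisely the remaining eigenvalues of $A$.

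First I would record that $X_0$ is regular and strongly connected. Regularity is immediate, since $X_0$ is a Cayley digraph with connection set $H$, so $d^{+}(v)=d^{-}(v)=|H|=r$ for every vertex $v$. Strong connectivity follows from $p$ being prime: any nonempty subset of $\Z_p$ avoiding $0$ contains an element coprime to $p$, hence generates $\Z_p$ additively, so $\langle H\rangle=\Z_p$, and a finite Cayley digraph is strongly connected exactly when its connection set generates the group. With these two properties available, Lemma~\ref{lem:hoff} applies and hands us a polynomial $g(x)\in\Q[x]$ with $\J=g(A)$.

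The crux is then to identify $\langle g(A)\rangle$. By Theorem~\ref{thm:a3}.\ref{thm:a3:2} one has $\langle g(A)\rangle=\langle h(A)\rangle$ with $h(x)=\gcd\bigl(g(x),p_A(x)\bigr)$ and $p_A(x)=(x-r)q(x)$, so it suffices to compute this gcd, and here I would exploit the eigenstructure of the circulant $A$. Its eigenvectors are the Fourier vectors $v_0=\e,v_1,\dots,v_{p-1}$ with $Av_i=\lambda_i v_i$ and $\lambda_0=r$. Applying $\J=g(A)$ to $\e$ gives $p\,\e=\J\e=g(A)\e=g(r)\e$, so $g(r)=p\neq 0$ and hence $(x-r)\nmid g(x)$. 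Applying $g(A)$ to any $v_i$ with $i\neq 0$ gives $\J v_i=\o$, because the entries of $v_i$ sum to zero, so $g(\lambda_i)=0$ for every eigenvalue $\lambda_i\neq r$; as these are exactly the roots of the irreducible $q$, this forces $q(x)\mid g(x)$. Combining the two divisibility facts gives $\gcd\bigl(g,(x-r)q\bigr)=q$ up to a unit, whence $\langle\J\rangle=\langle g(A)\rangle=\langle q(A)\rangle$, and the representation claim follows from the isomorphism $\Q[A]/\langle q(A)\rangle\cong\L$ supplied by Theorem~\ref{thm:a14}.

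The only genuinely substantive step is the last one, the determination of $\gcd(g,p_A)$. Lemma~\ref{lem:hoff} delivers the existence of $g$ for free but says nothing about which ideal $g(A)$ generates; the eigenvalue bookkeeping, namely that $\J$ multiplies the $\e$-eigenline by $p$ and annihilates every other Fourier eigenvector, is precisely what shows $g$ is divisible by $q$ but not by $(x-r)$. I note that one could bypass Lemma~\ref{lem:hoff} entirely: the same computation shows that $\J$ and $\tfrac{p}{q(r)}\,q(A)$ agree on the Fourier basis and are therefore equal. However, routing the argument through Lemma~\ref{lem:hoff} keeps it parallel to the general digraph setting and matches the approach the text indicates.
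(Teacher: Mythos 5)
Your proposal is correct and follows essentially the route the paper indicates: take the matrix $A$ of the digraph $X_0$ from Theorem~\ref{thm:a14}, observe that $X_0$ is regular and (since $p$ is prime) strongly connected, and apply Lemma~\ref{lem:hoff}. The gcd/eigenvalue bookkeeping you supply to identify $\langle \J\rangle$ with $\langle q(A)\rangle$ is exactly the detail the paper leaves implicit (it parallels Lemma~\ref{lem:regular:J} and Corollary~\ref{cor:regular}), and your closing remark that $\J=\tfrac{p}{q(r)}\,q(A)$ directly is the same observation made there.
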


\section{Smallest $0,1$- Companion Matrices Whose Minimal Polynomial is Divisible
by $\Phi_n(x)$}
\label{sec7}

In this section, for a fixed positive integer $n$, our objective is
to find a $0,1$-companion matrix of the smallest order that
represents $\Q[\ze_n]$. Let $\alpha$ denote the generic element such
that $\Q[\alpha]= \Q[\zeta_n]$. Using Corollary~\ref{c1}, this is
equivalent to finding the smallest $0,1$-companion matrix whose
minimal polynomial is divisible by the minimal polynomial of
$\alpha$. As there are infinitely many choices for  $\alpha$, we
restrict ourselves to $\alpha=\ze_n$. Hence,  we search for a
polynomial $f(x) \in \Z[x]$ of least degree such that $\Phi_n(x)$
divides $f(x)$ and $\Cl(f)$, the companion matrix of $f(x)$, is a
matrix with entries $0$ and $1$.  A similar study was made by  Filaseta $\&$ Schinzel~\cite{M:A} and Steinberger~\cite{J:S}, where they looked at polynomials with integer coefficients that are divisible by $\Phi_n(x)$.

Let $f(x) = x^n- a_{n-1}x^{n-1}-\dots -a_1x-a_0 \in \Z[x]$. Then $\Cl(f)$ is a
$0,1$-matrix if and only if for each $i, 0 \le i \le n-1, \; a_i\in \{0,1\}$.
Since $gcd(x^k,\Phi_n(x)) = 1$ for all $k\geq 1$, without loss of generality, we
can assume that  $a_0=1$. By definition, $\Phi_n(x)$ divides $g(x) = x^n-1$.
Hence $\Cl(g)$ is a $0,1$-matrix of order $n$ that represents $\Q[\zeta_n]$.
In order to determine whether there exists a matrix of smaller order, we define
a set $\A_n$ as \begin{eqnarray}
\A_n = \{ f(x) \in \Z[x] & : & \Phi_n(x) \mid f(x), f(x) \equiv x^m - a_{m-1}
x^{m-1} - \cdots a_1 x-1 \nonumber \\ && \hspace{.5in}  m < n, a_i \in \{0, 1\}
{\mbox{ for }} 1 \le i < m \} \label{defn:an}
\end{eqnarray}
and try to find the polynomial of least degree in $\A_n$.

Let $f(x) \in \A_n$. Then $f(x)$ has at least three terms as $m < n$. Hence,
$f(1)\ne 0$. Now, let $p$ be a prime. Then $\deg(f(x)) > \deg(\Phi_p(x)) =p-1$.
Thus, $\A_p $ is an empty set.  This is stated as our next result.


\begin{lemma}\label{lem:prime}
Let $p$ be a prime number. Then $\A_p$ is an empty set.
\end{lemma}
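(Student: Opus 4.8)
The plan is to argue by contradiction, exploiting the fact that the degree constraint $m<p$ leaves essentially no room for a multiple of $\Phi_p(x)$. Suppose $f(x)\in\A_p$. By the definition of $\A_p$, the polynomial $f$ is monic of degree $m<p$, lies in $\Z[x]$, has constant term $-1$, and is divisible by $\Phi_p(x)$. First I would pin down the degree of $f$: since $\Phi_p(x)$ divides the nonzero polynomial $f(x)$, we must have $\deg f\ge\deg\Phi_p=p-1$, that is $m\ge p-1$. Combined with $m<p$, this forces $m=p-1$.

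With $m=p-1$, the polynomials $f(x)$ and $\Phi_p(x)$ are both monic of the same degree $p-1$ and $\Phi_p\mid f$, so necessarily $f(x)=\Phi_p(x)$. I would then extract a contradiction from the low-order coefficients: the constant term of $f$ is $-1$, whereas $\Phi_p(x)=1+x+\cdots+x^{p-1}$ has constant term $+1$. Since $-1\ne 1$, the identity $f=\Phi_p$ is impossible, and therefore $\A_p$ must be empty.

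I do not expect a genuine obstacle here; the only points requiring care are the two elementary facts that a nonzero integer multiple of $\Phi_p$ has degree at least $p-1$, and that two monic polynomials of equal degree, one dividing the other, must coincide. As an alternative route that mirrors the remark immediately preceding the statement, one can instead evaluate at $x=1$: writing $f=\Phi_p\cdot h$ with $h\in\Z[x]$ (valid because $\Phi_p$ is monic) gives $f(1)=\Phi_p(1)\,h(1)=p\,h(1)$, so $p\mid f(1)$. On the other hand $f(1)=-\sum_{i=1}^{m-1}a_i$ with each $a_i\in\{0,1\}$, and $f$ cannot be the binomial $x^m-1$ (for then $\Phi_p\mid x^m-1$ would force $p\mid m$, impossible when $1\le m<p$ since the roots of $\Phi_p$ are primitive $p$-th roots of unity), so at least one $a_i=1$. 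Hence $0<|f(1)|\le m-1\le p-2<p$, giving $p\nmid f(1)$, the desired contradiction.
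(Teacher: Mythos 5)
Your main argument is correct and is essentially the paper's own proof: the paper likewise notes that $\Phi_p(x)\mid f(x)$ forces $\deg(f)>\deg(\Phi_p)=p-1$, which is incompatible with the requirement $m<p$ in the definition of $\A_p$; you merely spell out why equality $\deg f=p-1$ is impossible (it would force $f=\Phi_p$, contradicting the constant term $-1$ versus $+1$). The alternative evaluation-at-$1$ argument is a valid bonus but not needed.
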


\begin{rem}\label{rem:Ap}
Let $p$ be a prime. Then, starting with the field $\Q[\zeta_p]$, a $0,1$-matrix representing $\Q[\zeta_p]$ of least order is $W_p$, the companion matrix of $x^p - 1$. But, it can be easily seen that if there exists a $0,1$-matrix $A \in \M_{\ell}(\C)$ representing $\Q[\al] \cong \Q[\zeta_p]$ then $\ell \ge p-1$. Thus, it may be possible to get a $0,1$-matrix $A \in \M_{p-1}(\C)$ such that $A$ represents  $\Q[\al] \cong \Q[\zeta_p]$. 
\end{rem}

We now state a well known result about cyclotomic polynomials which enables us  to consider only  square-free positive integers $n$, where recall that  a positive integer $n$  is said to be square free if the decomposition of $n$ into primes
does not have any repeated factors.

\begin{lemma} [Prasolov, Page:93~\cite{V:P}] \label{lem:cyclo}
Let p be a prime number and let $n$ be a positive integer. Then
$$\Phi_{pn}(x)=
\begin{cases} \Phi_n(x^p), & \mbox{ if } p\mid n,\\
\frac{\Phi_n(x^p)}{\Phi_n(x)}, & \mbox{ if } p\nmid n.
\end{cases}$$
In particular, if   $n=p_1^{a_1} \cdots p_k^{a_k}$ is a prime
factorization of $n$ into distinct primes $p_1, p_2, \ldots, p_k$ and if
$n_0 = p_1 p_2 \cdots p_k$ then $\Phi_n(x)=\Phi_{n_0}(x^{n/n_0})$.
\end{lemma}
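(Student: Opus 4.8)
The plan is to prove both cases by comparing the roots of the two monic polynomials in question, and then to deduce the final ``in particular'' assertion by iterating the first case. Throughout, write $\ze_m$ for a primitive $m$-th root of unity and recall that $\Phi_m(x)=\prod (x-\eta)$, the product being taken over all primitive $m$-th roots of unity $\eta$, so that $\Phi_m(x)$ is monic of degree $\varphi(m)$. Since $\Phi_n(x^p)$, $\Phi_{pn}(x)$ and $\Phi_n(x)$ are all monic, in each case it suffices to check that the relevant polynomials have the same roots with the same multiplicities.

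The central computation is the following. Suppose $\eta\in\C$ satisfies $\Phi_n(\eta^p)=0$, i.e.\ $\eta^p$ is a primitive $n$-th root of unity, and let $k$ be the multiplicative order of $\eta$. Then the order of $\eta^p$ is $k/\gcd(k,p)$, so that $k/\gcd(k,p)=n$. First I would dispose of the two subcases of this equation: if $p\mid k$ then $\gcd(k,p)=p$ and $k=pn$, while if $p\nmid k$ then $\gcd(k,p)=1$ and $k=n$. Conversely, $\ze_{pn}$ has order $pn$ with $\gcd(pn,p)=p$, so $\ze_{pn}^{\,p}$ is a primitive $n$-th root, and $\ze_n^{\,p}$ has order $n/\gcd(n,p)$, which equals $n$ exactly when $p\nmid n$.

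Now split on whether $p\mid n$. If $p\mid n$, the possibility $k=n$ is excluded (it would force $p\mid k=n$, contradicting $p\nmid k$), so every root $\eta$ of $\Phi_n(x^p)$ is a primitive $pn$-th root of unity, and conversely each primitive $pn$-th root is such an $\eta$. To see that the roots are simple, note that for each of the $\varphi(n)$ primitive $n$-th roots $\omega$ the equation $\eta^p=\omega$ has exactly $p$ distinct solutions, that these $p\varphi(n)$ solutions are pairwise distinct, and that $p\varphi(n)=\varphi(pn)=\deg\Phi_n(x^p)$. Hence $\Phi_n(x^p)$ and $\Phi_{pn}(x)$ are monic of the same degree with the same simple roots, giving $\Phi_{pn}(x)=\Phi_n(x^p)$. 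If instead $p\nmid n$, then both values $k=n$ and $k=pn$ are permitted, so the root set of $\Phi_n(x^p)$ is the disjoint union of the primitive $n$-th and the primitive $pn$-th roots of unity; the same simplicity together with the count $\varphi(n)+\varphi(pn)=\varphi(n)+(p-1)\varphi(n)=p\varphi(n)$ then yields $\Phi_n(x^p)=\Phi_n(x)\,\Phi_{pn}(x)$, which rearranges to $\Phi_{pn}(x)=\Phi_n(x^p)/\Phi_n(x)$.

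Finally, for the ``in particular'' statement write $t=n/n_0=p_1^{a_1-1}\cdots p_k^{a_k-1}$, and I would induct on the number of prime factors of $t$ counted with multiplicity. When $t=1$ we have $n=n_0$ and there is nothing to prove. Otherwise choose a prime $p\mid t$; then $p\mid n_0$, whence $p\mid (n/p)$ (since $n_0\mid n/p$), so the first case applies and gives $\Phi_n(x)=\Phi_{(n/p)p}(x)=\Phi_{n/p}(x^p)$. Applying the inductive hypothesis to $n/p$, whose quotient by $n_0$ is $t/p$ and so has one fewer prime factor, gives $\Phi_{n/p}(x)=\Phi_{n_0}(x^{t/p})$; substituting $x\mapsto x^p$ yields $\Phi_{n/p}(x^p)=\Phi_{n_0}(x^t)$, and hence $\Phi_n(x)=\Phi_{n_0}(x^t)$. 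The main obstacle is the root-counting in the third paragraph: one must justify that $\Phi_n(x^p)$ is separable and that the correspondence $\eta\mapsto\eta^p$ matches roots exactly with the correct multiplicities, since it is precisely this step that forces the appearance or absence of the extra factor $\Phi_n(x)$ according to whether $p\nmid n$ or $p\mid n$.
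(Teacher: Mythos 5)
Your proof is correct. The paper itself gives no proof of this lemma --- it is quoted from Prasolov~\cite{V:P} as a known fact --- so there is nothing to compare against; your root-counting argument (classifying the order $k$ of a root $\eta$ of $\Phi_n(x^p)$ via $k/\gcd(k,p)=n$, checking separability and degrees, and then iterating the case $p\mid n$ to get $\Phi_n(x)=\Phi_{n_0}(x^{n/n_0})$) is the standard derivation and is carried out without gaps.
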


Steinberger~\cite{J:S} pointed out that
the problem of finding polynomials divisible by $\Phi_n(x)$ is
equivalent to finding polynomials divisible by $\Phi_{n_0}(x),$
where $n_0$ is the maximum square-free factor of $n$. Following is
a similar assertion in the current context. We give the proof for the sake of
completeness.

\begin{lemma}\label{lem:squarefree}
Let $n = p_1^{a_1}p_2^{a_2}\cdots p_k^{a_k}$ be a factorization of $n$
into distinct primes $p_1, p_2, \ldots, p_k$ and let $n_0=p_1p_2 \cdots p_k$.
Then
$$\min\{\deg(f(x)): f(x) \in \A_n\}=\frac{n}{n_0}\min\{\deg(f(x)): f(x)
\in \A_{n_0}\}.$$
\end{lemma}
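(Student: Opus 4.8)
The plan is to exploit the substitution relation $\Phi_n(x)=\Phi_{n_0}(x^{n/n_0})$ from Lemma~\ref{lem:cyclo} to set up a degree-preserving correspondence between $\A_n$ and $\A_{n_0}$ scaled by the factor $n/n_0$. Write $t = n/n_0$, so that $\Phi_n(x) = \Phi_{n_0}(x^t)$. The key observation is that divisibility by $\Phi_n(x)$ of a polynomial in $x$ is intimately tied to divisibility by $\Phi_{n_0}(y)$ after the substitution $y = x^t$, but only for polynomials whose exponents are constrained appropriately.

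First I would prove the inequality $\min\{\deg f : f \in \A_n\} \le t \cdot \min\{\deg g : g \in \A_{n_0}\}$. Take any $g(x) \in \A_{n_0}$ of minimal degree, say $g(x) = x^m - a_{m-1}x^{m-1} - \cdots - a_1 x - 1$ with $m < n_0$ and $a_i \in \{0,1\}$, and $\Phi_{n_0}(x) \mid g(x)$. Define $f(x) = g(x^t)$. Then $f$ has degree $tm$, its nonzero non-leading coefficients are still $0$ or $1$ (they just sit at exponents that are multiples of $t$), its constant term is $1$, and $tm < t n_0 = n$, so the degree bound $\deg f < n$ holds. Moreover $\Phi_n(x) = \Phi_{n_0}(x^t)$ divides $g(x^t) = f(x)$ because $\Phi_{n_0}(y) \mid g(y)$. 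Hence $f(x) \in \A_n$ and this direction follows.

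The harder direction is the reverse inequality $\min\{\deg f : f \in \A_n\} \ge t \cdot \min\{\deg g : g \in \A_{n_0}\}$, and this is where I expect the main obstacle to lie. The plan is to take $f(x) \in \A_n$ of minimal degree and show that in fact only exponents divisible by $t$ can carry nonzero coefficients, so that $f(x) = g(x^t)$ for some $g \in \A_{n_0}$. The idea is to group the terms of $f$ by the residue of their exponents modulo $t$: write $f(x) = \sum_{j=0}^{t-1} x^j f_j(x^t)$. Since $\Phi_n(x) = \Phi_{n_0}(x^t)$ divides $f(x)$, and $\Phi_{n_0}(x^t)$ is a polynomial purely in $x^t$, I would argue that each residue class $f_j(x^t)$ must itself be divisible by $\Phi_{n_0}(x^t)$; concretely, evaluating at $\zeta_n$ and at its Galois conjugates (the primitive $n$-th roots of unity, which are obtained from a primitive $n_0$-th root $\eta$ via $t$-th roots, multiplied by the $t$-th roots of unity) lets one separate the residue classes and force each $f_j(\eta) = 0$. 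The delicate point is ensuring the $\{0,1\}$-coefficient structure and the constant-term normalization survive: the constant term of $f$ lies in the $j=0$ class, so $f_0$ is the piece carrying the constant $1$, and minimality of $\deg f$ should force all $f_j$ with $j \ne 0$ to vanish, leaving $f(x) = f_0(x^t) = g(x^t)$ with $g \in \A_{n_0}$ and $\deg g = \deg f / t$.

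To make the separation argument rigorous I would use that the distinct $t$-th roots of unity $\omega_0, \dots, \omega_{t-1}$ give, for a fixed primitive $n_0$-th root $\eta$, the values $f(\omega_\ell \eta^{1/t})$ (choosing a consistent $t$-th root), and that the Vandermonde-type system in the powers $\omega_\ell^j$ is invertible; this isolates each $f_j$. The main technical care is that $t = n/n_0$ and $n_0$ share the same prime support, so every prime dividing $t$ already divides $n_0$ and the roots of unity interleave correctly; this is exactly the content that makes $\Phi_n(x) = \Phi_{n_0}(x^t)$ rather than a quotient. Combining the two inequalities yields the claimed equality.
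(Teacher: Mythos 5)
Your first inequality and your residue-class decomposition $f(x)=\sum_{j=0}^{t-1}x^{j}f_j(x^{t})$ are exactly the paper's setup, and the intermediate claim that $\Phi_{n_0}(y)$ divides each $f_j(y)$ is correct (the paper obtains it more cheaply than your Vandermonde separation: writing $f=\Phi_{n_0}(x^t)g(x)$ and grouping $g$ by exponent residues mod $t$ shows directly that the residue-$j$ part of $f$ is $x^{j}\Phi_{n_0}(x^{t})g_j(x^{t})$, since multiplication by a polynomial in $x^t$ preserves residue classes). Up to this point the two arguments coincide in substance.

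The gap is your final step: the assertion that minimality of $\deg f$ forces $f_j=\o$ for all $j\neq 0$, so that $f(x)=f_0(x^{t})$, is not justified and is in fact false. For example, with $n=36$, $n_0=6$, $t=6$, both $x^{30}-x^{24}-1=(x^{12}-x^{6}+1)(x^{18}-x^{6}-1)$ and $x^{30}-x^{24}-x^{19}-x-1$ lie in $\A_{36}$ (the second differs from the first by $x^{19}+x=x(x^{6}+1)\Phi_{36}(x)$) and have the same degree; so if one is of minimal degree so is the other, and the second is not a polynomial in $x^{6}$. Thus minimality cannot force the other residue classes to vanish, and your argument does not close. The correct completion --- which is what the paper does --- abandons the claim $f=f_0(x^t)$ entirely and extracts only the residue class containing the leading monomial $x^{m}$: since $\gcd(y^{k},\Phi_{n_0}(y))=1$, that class cannot be a lone monomial, so it has the form $x^{m}-x^{r_\ell}-\cdots-x^{r_1}$ with all exponents congruent mod $t$ and all non-leading coefficients equal to $-1$ (the only $+1$ in $f$ is the leading one); dividing by $x^{r_1}$ yields a polynomial $h_1(x^{t})$ with $h_1\in\A_{n_0}$ and $t\deg(h_1)=m-r_1\le m=\deg(f)$, which is the reverse inequality. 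No statement about the classes $j\neq 0$ is needed.
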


\begin{proof}
Let $f(x)\in \A_{n_0}$. Then by  Lemma~\ref{lem:cyclo},  $f(x^{n/n_0})\in
\A_n$.

Conversely, suppose $f(x)\in \A_n$. Then $\Phi_n(x)$
divides $f(x)$ and therefore using Gauss lemma on polynomials [see~Dummit $\&$ Foote, Page~$304$ \cite{D:F}] and  Lemma~\ref{lem:cyclo}, $$f(x) = \Phi_n(x)
g(x) = \Phi_{n_0}(x^{n/n_0}) g(x) {\mbox{ for some }} g(x) \in \Z[x].$$ We now
group the terms of
$g(x)$ such that $g(x)=\sum\limits_{i=0}^{\frac{n}{n_0}-1}g_i(x^{n/n_0})
x^i$, where $g_i(x^{n/n_0})$ is a polynomial in $x^{n/n_0}$ (collect the
terms containing the exponents that are equivalent to $i \pmod {n/n_0}$).
Therefore,
$$f(x) = \sum_{i=0}^{\frac{n}{n_0}-1} \Phi_{n_0}(x^{n/n_0}) g_i(x^{n/n_0})
x^i = \sum_{i=0}^{\frac{n}{n_0}-1} f_i(x) x^i \; {\mbox{(say)}}.$$ That
is, the polynomials $f_i(x)$, for $0 \le i \le n/n_0 -1$, are divisible by
$\Phi_n(x) = \Phi_{n_0}(x^{n/n_0})$. Let the polynomial $f_j(x) x^j$ contain
$x^m$, the leading term of $f(x)$. Then $f(x) \in \A_n$ implies that
$$\Phi_{n_0}(x^{n/n_0}) g_j(x^{n/n_0})
x^j = x^m-x^{r_\ell}-x^{r_{\ell-1}}-\cdots-x^{r_1}, \; {\mbox{ with }} j
\le  r_1 < r_2 < \cdots < r_\ell.$$
 As $\gcd(\Phi_n(x),x^{r_1})=1$, the
polynomial $h(x)=x^{m-r_1}-x^{r_\ell-r_1}-\cdots-x^{r_2-r_1}-1$ is expressible
as a polynomial in $x^{n/n_0}$ and is divisible by $\Phi_{n_0}(x^{n/n_0})$.
Hence, one obtains a polynomial
$h_1(y)=y^{m'}-y^{r'_\ell}-\cdots-y^{r'_2}-1\in \A_{n_0}$ such that
$h(x)=h_1(x^{n/n_0})$ and $\frac{n}{n_0}\deg(h_1)\leq \deg(f)=m$. Thus, the
desired result follows.
\end{proof}

\begin{rem}\label{rem:squarefree}
Lemma~\ref{lem:squarefree} implies that in order to determine
$\min\{\deg(f(x)): f(x) \in \A_n\}$,  it is sufficient to solve the same problem
in $\A_{n_0}$, where $n_0$ equals the product of  all the prime factors
of $n$, a square-free positive integer. Henceforth, $n$  will be a square-free
positive integer.
\end{rem}

Lemma~\ref{lem:prime} together with Lemma~\ref{lem:squarefree} leads to
our next result.

\begin{lemma}\label{lem:pk}
Let $p$ be a prime and let $n=p^k$ for some $k\in\Z^+$. Then $\A_n$ is an empty set.
\end{lemma}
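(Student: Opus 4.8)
The plan is to combine the two immediately preceding lemmas. Lemma~\ref{lem:prime} tells us that $\A_p$ is empty for every prime $p$, and Lemma~\ref{lem:squarefree} reduces the study of $\A_n$ to the study of $\A_{n_0}$, where $n_0$ is the product of the distinct prime factors of $n$. The key observation is that when $n = p^k$ for a single prime $p$, the radical $n_0$ is simply $p$ itself.

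First I would apply Lemma~\ref{lem:squarefree} with the factorization $n = p^k$. In this case the distinct prime factors of $n$ consist of the single prime $p$, so $n_0 = p$ and $n/n_0 = p^{k-1}$. The lemma then yields the identity
$$\min\{\deg(f(x)): f(x) \in \A_n\} = p^{k-1} \cdot \min\{\deg(f(x)): f(x) \in \A_{p}\}.$$
Next I would invoke Lemma~\ref{lem:prime}, which asserts that $\A_p$ is empty. The minimum of $\deg(f(x))$ over an empty set is not attained (equivalently, is $+\infty$), so the right-hand side reflects that no polynomial satisfies the defining conditions of $\A_p$. Consequently the left-hand side cannot correspond to any genuine polynomial either, forcing $\A_n$ to be empty as well.

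To make the last implication rigorous without appealing to conventions about the minimum of an empty set, I would argue contrapositively at the level of the polynomials themselves rather than their degrees. Suppose, for contradiction, that $\A_n$ contains some $f(x)$. Then the construction in the proof of Lemma~\ref{lem:squarefree} produces a polynomial $h_1(y) \in \A_{n_0} = \A_p$ with $h(x) = h_1(x^{n/n_0})$, which directly contradicts the fact that $\A_p$ is empty. Hence no such $f(x)$ can exist, and $\A_n$ is empty.

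I do not expect any serious obstacle here, since the statement is essentially a specialization of the two preceding lemmas to the prime-power case. The only point requiring a little care is the bookkeeping around the empty set: one should either adopt the convention that the minimum over the empty set is infinite and interpret the degree identity accordingly, or—more cleanly—phrase the final step as the contrapositive argument above, so that the emptiness of $\A_p$ transfers to the emptiness of $\A_n$ via the explicit reduction map $f(x) \mapsto h_1(y)$ supplied by Lemma~\ref{lem:squarefree}.
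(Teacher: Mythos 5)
Your proposal is correct and follows exactly the route the paper intends: the paper gives no written proof, stating only that Lemma~\ref{lem:prime} together with Lemma~\ref{lem:squarefree} yields the result, which is precisely your combination of the two. Your contrapositive phrasing via the explicit reduction $f(x)\mapsto h_1(y)$ from the proof of Lemma~\ref{lem:squarefree} is a clean way to avoid the empty-set minimum issue and fills in the detail the paper leaves implicit.
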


Thus, we will be interested only in those positive integers $n$ that has at
least two prime factors. In this case, it will be shown (see
Corollary~\ref{cor:pq} on Page~\pageref{cor:pq}) that the set $\A_n$ in
non-empty. To start with, note that  \begin{equation}
\varphi(n)\leq \min\{\deg(f(x)) : \; f(x) \in \A_n \} < n. \label{eqn:bound:an}
\end{equation}

Using a small observation, we improve the lower-bound in
Equation~(\ref{eqn:bound:an}) as follows.

\begin{lemma}\label{lem:lower}
Let $n$ be a positive integer. Then
\begin{equation}
\label{eqn:bound}\max \{\varphi(n), \lceil\frac{n}{2} \rceil \} <
\min\{\deg(f(x)) : \; f(x) \in
\A_n \} < n.\end{equation}
\end{lemma}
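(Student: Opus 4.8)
The plan is to establish the two nontrivial inequalities in
\eqref{eqn:bound} separately, the upper bound $\min\{\deg f:f\in\A_n\}<n$ being
immediate from the definition of $\A_n$ (every $f\in\A_n$ has $\deg f=m<n$).
For the lower bound, the two competing quantities are $\varphi(n)$ and
$\lceil n/2\rceil$, so I would show that \emph{every} $f(x)\in\A_n$ satisfies
both $\deg f\ge\varphi(n)$ and $\deg f>\lceil n/2\rceil$; since the strict
inequality for $n/2$ gives the strict sign, taking the maximum then yields the
stated strict lower bound. The bound $\deg f\ge\varphi(n)$ is the trivial one:
$\Phi_n(x)\mid f(x)$ forces $\deg f\ge\deg\Phi_n=\varphi(n)$, and this is the
lower half of \eqref{eqn:bound:an} already recorded in the text. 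So the real
content is the claim $\deg f>\lceil n/2\rceil$, equivalently
$m>\lceil n/2\rceil$ for the leading exponent $m$ of $f$.

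To prove $m>\lceil n/2\rceil$, fix $f(x)=x^m-a_{m-1}x^{m-1}-\cdots-a_1x-1\in\A_n$
with all $a_i\in\{0,1\}$ and $m<n$. The key structural fact is that
$\Phi_n(x)\mid f(x)$ means $f(\ze_n)=0$, i.e.\ $\ze_n$ is a root of a polynomial
whose coefficients are all $0$ or $\pm1$. I would exploit the primitive
$p$-th-root-of-unity type relations coming from a prime divisor of $n$. Since by
Remark~\ref{rem:squarefree} we may take $n$ square-free, write $n=p\,n'$ with $p$
the smallest prime factor. The cleanest route is to use a counting/periodicity
argument on the exponents: group the exponents $\{0,1,\dots,m\}$ appearing in
$f$ into residue classes and exploit that $\ze_n^{a}$ for $a$ ranging over a full
period of length dividing $n$ sums to zero. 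The heuristic behind
$m>\lceil n/2\rceil$ is that the exponents of $f$ (a $0,1$ polynomial with
constant term $1$ and leading term $x^m$) must, modulo the vanishing relation
$\Phi_n(\ze_n)=0$, cover enough of the range $\{0,\dots,n-1\}$ that they cannot
all fit below $n/2$; if they did, one could derive a shorter vanishing
$0,\pm1$-relation in $\ze_n$ and contradict the minimality or the irreducibility
of $\Phi_n$.

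Concretely, I would argue by contradiction: suppose $m\le\lceil n/2\rceil$. Then
$f(x)$ and the ``reversed/reflected'' polynomial built from $x^{n}f(1/x)$ or from
multiplying $f$ by a suitable monomial both have all their exponents lying in a
window of length at most $n$, and the relation $\ze_n^n=1$ lets me fold the
high exponents back. The aim is to produce a nonzero polynomial $g(x)$ with
coefficients in $\{-1,0,1\}$, of degree strictly less than $\varphi(n)$ (or a
short difference $f(x)-x^j f(x)$ type expression) that vanishes at $\ze_n$,
contradicting $\deg\Phi_n=\varphi(n)$ being the minimal degree of any nonzero
$\Q$-polynomial killing $\ze_n$. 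The main obstacle I anticipate is making the
folding argument tight enough to get the \emph{strict} bound at exactly
$\lceil n/2\rceil$ rather than a weaker constant: one must carefully track the
constant term $-1$ and leading term $+1$ (so $f$ genuinely has at least three
terms, as already noted before Lemma~\ref{lem:prime}, giving $f(1)\ne0$), and
handle the parity of $n$ in the ceiling. Once the strict inequality
$m>\lceil n/2\rceil$ is secured, combining it with $\deg f\ge\varphi(n)$ and the
trivial $\deg f<n$ completes the proof of \eqref{eqn:bound}.
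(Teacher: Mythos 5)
Your reduction of the lemma to the single claim $\deg f > \lceil n/2\rceil$ for every $f\in\A_n$ is the right skeleton (it is exactly how the paper frames it, on top of \eqref{eqn:bound:an}), but the one step that carries all the content is left as a heuristic, and the mechanism you propose for it would not work. You suggest that if $m\le \lceil n/2\rceil$ one could ``fold'' the exponents using $\ze_n^n=1$ and produce a nonzero polynomial with coefficients in $\{-1,0,1\}$ of degree strictly less than $\varphi(n)$ vanishing at $\ze_n$, contradicting the minimality of $\Phi_n$. But when $\varphi(n)\le n/2$ --- which is precisely the regime where the $\lceil n/2\rceil$ bound is the binding one (e.g.\ $n=30$, $\varphi(n)=8<15$) --- there is no degree obstruction at all: a polynomial supported on exponents in $\{0,1,\dots,\lfloor n/2\rfloor\}$ has ample room to be divisible by $\Phi_n(x)$, so no amount of folding can be expected to manufacture an annihilating polynomial of degree below $\varphi(n)$. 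Irreducibility and minimality of $\Phi_n$ simply are not the relevant obstructions here.

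The missing idea is a positivity (half-plane) argument. Write $f(x)=x^m-x^{k_\ell}-\cdots-x^{k_1}-1$ with $0<k_1<\cdots<k_\ell<m$ and suppose $m\le n/2$. Rewriting $0=f(\ze_n)$ in terms of $\ze_{2n}$ (via $\ze_n=\ze_{2n}^2$ and $\ze_{2n}^n=-1$) and multiplying by $\ze_{2n}^{\,n-2m}$ turns the relation into $0=1+\sum_{j=0}^{\ell}\ze_{2n}^{\,n-2m+2k_j}$ with $k_0=0$, where every exponent satisfies $0\le n-2m+2k_j<n$ because $n-2m\ge 0$ and $k_j<m$. Each term therefore has argument in $[0,\pi)$, hence non-negative imaginary part, vanishing only for the exponent $0$; since the exponents are distinct and there are at least two of them ($f$ has at least three terms, as noted before Lemma~\ref{lem:prime}), the total imaginary part is strictly positive, which contradicts the vanishing. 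That is the proof, and nothing in your sketch substitutes for it. A separate small point, which the paper also glosses over: when $\varphi(n)>\lceil n/2\rceil$ (e.g.\ $n=21$) the lemma's strict inequality requires $\deg f>\varphi(n)$, not just $\ge$; this follows because $\deg f=\varphi(n)$ would force $f=\Phi_n$, impossible since $f(0)=-1$ while $\Phi_n(0)=1$ for $n>1$.
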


\begin{proof}
The lemma is immediate from Equation~(\ref{eqn:bound:an}) if we can show that
$\deg(f(x)) >\lceil\frac{n}{2} \rceil$. Suppose
$f(x)=x^m-x^{k_{\ell}}-x^{k_{\ell-1}}-\dots-x^{k_1}-1 \in \A_n$ with $0<
k_1<k_2< \cdots < k_\ell<m$. As $ \Phi_n(x)$ divides $f(x)$,  $f(\ze_n) = 0$.
Now, let if possible, $m\leq\frac{n}{2}$. Then $n - 2m \ge 0$ and using the fact
that $\ze_n = \ze_{2n}^2 = \cos(2 \pi/n) + i \sin(2 \pi/n)$ and $\ze_{2n}^n =
-1$, we get
\begin{eqnarray*}
0 = f(\ze_n) &=&
-\ze_n^m+\zeta_n^{k_\ell}+\zeta_n^{k_{\ell-1}}+\cdots+\zeta_n^{k_1}+1 =
-\ze_{2n}^{2m}+\zeta_{2n}^{2 k_\ell} + \zeta_{2n}^{2 k_{\ell-1}} + \cdots +
\zeta_{2n}^{2 k_1}+1 \\
&=& 1+\ze_{2n}^{n-2m+2k_{\ell}} +  \ze_{2n}^{n-2m+2k_{\ell -1}} + \cdots +
\ze_{2n}^{n-2m+2k_1}+ \ze_{2n}^{n-2m} \\
&=& 1 + \sum_{j=0}^\ell \left( \cos\left(\frac{(n-2m + 2 k_j) \pi}{n}\right) + i
 \sin\left(\frac{(n-2m + 2 k_j) \pi}{n}\right)\right),
\end{eqnarray*}
 where $k_0=0$. Now using the choice of $k_j$'s, one gets $n - 2m + 2 k_j < n$
for each $j=0, 1,2, \ldots, \ell$. Hence, $\sum\limits_{j=0}^\ell
\sin(\frac{(n-2m + 2 k_j) \pi}{n})$ cannot be zero. Thus, we have arrived at a
contradiction and therefore the required result follows.
\end{proof}

This section is arranged as follows: the first subsection is devoted to
characterizing $\A_n$ in terms of certain subsets of roots of unity.  In
Subsection~\ref{sec2:2:1}, the exact value of $\min\{\deg(f(x)): f(x) \in
\A_n\}$ is obtained whenever $n$ has exactly two prime factors. The last
subsection, namely Subsection~\ref{sec2:2:2}, gives a bound on
$\min\{\deg(f(x)): f(x) \in \A_n\}$ whenever $n$ has $3$ or more prime factors.

\subsection{Characterization of $\A_n$ by One-Sums}
 \label{sec:An-Bn}

For a fixed positive integer $n$,  let $U_n = \{k : 1 \le k \le n, \gcd(k,n) =
1\}$ and  $R_n = \{\ze_n^k : \; 0 \le k \le n-1
\}$. Then $| U_n| = \varphi(n)$, $R_n$ contains all the $n$-th roots of unity
and $\{\ze_n^k : k \in U_n \}$ contains all the primitive $n$-th roots of unity.
 The following result can be found in Apostol~\cite{T:A}.

\begin{lemma}[Apostol~\cite{T:A}]\label{lem:mu}
Let $n$ be a positive integer. Then $\sum\limits_{k \in U_n}  \ze_n^k =
\mu(n)$, where $$ \mu(n)=
\begin{cases} 0, & {\mbox{ if }} \; n \; {\mbox{ is not square free,}} \\
 1, &  {\mbox{ if }} \; n \; {\mbox{ has even number of prime factors,}}\\
-1, & {\mbox{ if }} \; n \; {\mbox{ has odd number of prime factors.}}
\end{cases}$$
\end{lemma}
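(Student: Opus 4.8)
The plan is to identify the sum of the primitive $n$-th roots of unity with a function defined by a divisor-sum recursion that the M\"obius function also satisfies. Write $c_n = \sum_{k \in U_n} \ze_n^k$ for the sum of all primitive $n$-th roots of unity. The crucial observation is that each of the $n$-th roots of unity $\ze_n^j$, $0 \le j \le n-1$, is a primitive $d$-th root of unity for exactly one divisor $d$ of $n$ (namely $d = n/\gcd(j,n)$), and conversely every primitive $d$-th root of unity with $d \mid n$ arises this way. Grouping the roots according to their order therefore gives
\begin{equation*}
\sum_{d \mid n} c_d = \sum_{j=0}^{n-1} \ze_n^j = \frac{\ze_n^n - 1}{\ze_n - 1},
\end{equation*}
which, since $\ze_n^n = 1$, evaluates to $0$ when $n > 1$ (as then $\ze_n \ne 1$) and to $1$ when $n = 1$. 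Hence $\sum_{d \mid n} c_d$ equals $1$ if $n = 1$ and $0$ otherwise.

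Next I would show that $\mu$, as defined in the statement by the parity of the number of prime factors of a square-free integer, satisfies the identical divisor-sum identity: $\sum_{d \mid n} \mu(d)$ equals $1$ for $n = 1$ and $0$ for $n > 1$. For $n > 1$ with $n = p_1^{a_1} \cdots p_k^{a_k}$, only the square-free divisors contribute (as $\mu$ vanishes on the rest), so the sum becomes $\sum_{S \subseteq \{p_1, \ldots, p_k\}} (-1)^{|S|} = (1-1)^k = 0$ by the binomial theorem, while the $n=1$ case is immediate. With both $n \mapsto c_n$ and $\mu$ satisfying the same recursion, I conclude $c_n = \mu(n)$ by strong induction on $n$: the base case $c_1 = 1 = \mu(1)$ holds, and for $n > 1$ the two identities give $c_n = -\sum_{d \mid n,\, d < n} c_d = -\sum_{d \mid n,\, d < n} \mu(d) = \mu(n)$, invoking the inductive hypothesis on the proper divisors.

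The argument has no serious obstacle, this being a classical fact, but the two points that need care are the bijective bookkeeping in the first step (that partitioning the $n$-th roots by their exact order precisely reindexes the full sum as $\sum_{d \mid n} c_d$) and the verification that the explicitly defined $\mu$ obeys the divisor-sum recursion. As an independent check one could instead prove the result by establishing that $n \mapsto c_n$ is multiplicative and computing $c_p = -1$ together with $c_{p^a} = 0$ for $a \ge 2$ directly (the latter because summing $\ze_{p^a}^j$ over those $j$ divisible by $p$ already yields $0$); this reproduces the prime-power values of $\mu$ and hence matches the given definition. The divisor-sum route above, however, is the most economical.
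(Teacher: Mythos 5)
Your proof is correct: the partition of the $n$-th roots of unity by exact order, the identity $\sum_{d\mid n}\mu(d)=0$ for $n>1$ via the binomial theorem, and the strong induction are all sound, and the one delicate point (the geometric-series formula requiring $\ze_n\neq 1$) is handled. The paper supplies no proof of this lemma, citing Apostol instead, and your divisor-sum argument is essentially the standard one found there, so there is nothing to contrast.
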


Fix a positive integer $n$ and let $T \subset R_n$. Let $\sigma(T) =
\sum\limits_{\al \in T} \al$, denote the sum of all the elements of $T$. In
particular, recall that $\sigma (R_n)
= 0$. We now define a subset $\Bl_n $ of $R_n$ by
 \begin{equation}\label{defn:bn}
 \Bl_n = \{ T \subset R_n\setminus \{1\} : \; \sigma(T) = 1 \}.
\end{equation}
Then the next result gives a bijection
between the sets $\A_n$ and $\Bl_n$. This correspondence is useful in
constructing  members of $\A_n$.

\begin{theorem}\label{thm:An-Bn}
Let $\A_n$ and $\Bl_n$ be defined as above. Then there exists a bijection
 between  $\A_n$ and
$\Bl_n$ such that $x^m-1-x^{k_1}-x^{k_2}-\cdots -x^{k_l}\in \A_n$ corresponds to
$\{\zeta^{n-m},\zeta^{n-m+k_1},\cdots ,\zeta^{n-m+k_l}\}\in \Bl_n$.
\end{theorem}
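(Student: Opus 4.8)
The plan is to exhibit the bijection explicitly by writing down the forward map $\A_n \to \Bl_n$ together with a candidate inverse, and then checking that the two compositions are the identity. The forward map is forced by the divisibility $\Phi_n(x)\mid f(x)$: since $\Phi_n$ is the minimal polynomial of $\ze_n$, every $f\in\A_n$ satisfies $f(\ze_n)=0$, that is, $\ze_n^m = 1 + \ze_n^{k_1}+\cdots+\ze_n^{k_l}$ for $f(x)=x^m-1-x^{k_1}-\cdots-x^{k_l}$ with $0<k_1<\cdots<k_l<m<n$. Multiplying this identity by $\ze_n^{n-m}=\ze_n^{-m}$ converts it into the one-sum $1 = \ze_n^{n-m}+\ze_n^{n-m+k_1}+\cdots+\ze_n^{n-m+k_l}$, so I set $T_f=\{\ze_n^{n-m},\ze_n^{n-m+k_1},\ldots,\ze_n^{n-m+k_l}\}$, which is precisely the set named in the statement.

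Next I verify $T_f\in\Bl_n$. The condition $\sigma(T_f)=1$ is exactly the displayed one-sum. For $T_f$ to be a genuine subset of $R_n\setminus\{1\}$ of size $l+1$, I observe that the exponents $n-m,\,n-m+k_1,\ldots,n-m+k_l$ form a strictly increasing sequence lying in $\{1,2,\ldots,n-1\}$: the smallest is $n-m\ge 1$ (since $m\le n-1$) and the largest is $n-m+k_l<n$ (since $k_l<m$). Hence no exponent is $\equiv 0\pmod n$, so $1\notin T_f$, and no two coincide mod $n$, so the roots of unity are distinct.

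For the inverse, I take any $T\in\Bl_n$, write its elements as $\ze_n^{e_0},\ze_n^{e_1},\ldots,\ze_n^{e_l}$ with $1\le e_0<e_1<\cdots<e_l\le n-1$ (the exponents lie in $\{1,\ldots,n-1\}$ because $1=\ze_n^0\notin T$), and set $m=n-e_0$ and $k_j=e_j-e_0$, producing $f_T(x)=x^m-1-x^{k_1}-\cdots-x^{k_l}$. The degree and coefficient conditions defining $\A_n$ are immediate from $1\le e_0<\cdots<e_l\le n-1$, giving $1\le m\le n-1$ and $0<k_1<\cdots<k_l<m$. The one substantive point is the divisibility $\Phi_n(x)\mid f_T(x)$; I reduce it to the single evaluation $f_T(\ze_n)=0$, which holds by reversing the forward computation: using $\sigma(T)=1$ in the form $\sum_{j=1}^l \ze_n^{e_j}=1-\ze_n^{e_0}$, one factors out $\ze_n^{-e_0}$ and gets $f_T(\ze_n)=\ze_n^{-e_0}\ze_n^{e_0}-1=0$. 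Since $\Phi_n$ is monic and is the minimal polynomial of $\ze_n$, Gauss's lemma (as invoked in the proof of Lemma~\ref{lem:squarefree}, \cite{D:F}) upgrades $f_T(\ze_n)=0$ to $\Phi_n(x)\mid f_T(x)$ in $\Z[x]$, so $f_T\in\A_n$.

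Finally, the two maps are mutually inverse by direct substitution. Starting from $f$ with data $(m,k_1,\ldots,k_l)$, the set $T_f$ has smallest exponent $e_0=n-m$ and remaining exponents $n-m+k_j$, so the inverse returns $m=n-e_0$ and $k_j=e_j-e_0$, recovering $f$; the reverse composition is the same bookkeeping. I expect the only real care to lie in this exponent arithmetic—keeping the exponents inside $\{1,\ldots,n-1\}$ so that distinctness of the elements and the exclusion of $1$ are automatic—rather than in any deeper algebra, the latter being entirely absorbed by the minimality of $\Phi_n$ and Gauss's lemma.
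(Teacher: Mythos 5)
Your proposal is correct and follows essentially the same route as the paper: evaluate $f$ at $\ze_n$, multiply the resulting identity by $\ze_n^{n-m}$ to obtain the one-sum set, and reverse the shift of exponents for the inverse map. You simply make explicit several details the paper leaves implicit (distinctness of the elements of $T_f$, the exclusion of $1$, the minimal-polynomial argument upgrading $f_T(\ze_n)=0$ to $\Phi_n(x)\mid f_T(x)$, and the check that the two maps are mutually inverse), all of which are accurate.
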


 \begin{proof}
Let $f(x)=x^m-1-x^{k_1}-x^{k_2}-\dots-x^{k_\ell} \in \A_n$ with $1\le
k_1<k_2< \cdots < k_\ell<m< n$. As $f(x) \in \A_n,  f(\ze_n) = 0$ and hence
$\zeta_n^m=1+\zeta_n^{k_1}+\zeta_n^{k_2}+\cdots+\zeta_n^{k_\ell}.$
Or equivalently, $T = \{\zeta_n^{n-m},\zeta_n^{n - m +
k_1},\ldots,\zeta_n^{n-m+k_\ell}\}\in \Bl_n \; {\mbox{ as }} \; \sigma(T) = 1.$

Conversely, let $T = \{\ze_n^{k_0}, \ze_n^{k_0+k_1},
\ze_n^{k_0+k_2}, \ldots, \ze_n^{k_0+k_\ell} \} \in \Bl_n$,
where  $1\le k_0< k_0+k_1<k_0+k_2< \cdots < k_0+k_\ell < n$. Then
$\sum\limits_{i=1}^{\ell} \ze_n^{k_0+k_i} + \ze_n^{k_0} = 1$, or equivalently,
 $\ze_n^{n-k_0} = 1 + \ze_n^{k_1} + \cdots + \ze_n^{k_\ell}=0.$ Thus,  $f(x) =
x^{n-k_0} - x^{k_\ell}- x^{k_{\ell-1}}
\cdots - x^{k_1} - 1\in \A_n$ and  the required result
follows.
 \end{proof}

Theorem~\ref{thm:An-Bn} leads to the following important remark.

\begin{rem}\label{rem:min:degree}
Fix a positive integer $n$ and let $f(x)$ be a polynomial of least degree
in $\A_n$. Then $\deg(f(x)) = n- k_0$, where $k_0$ is obtained as
follows: ``for each element $T$ of $\Bl_n$, let $k_T$ be the least
positive integer such that $\ze_n^{k_T} \in T.$ Then $k_0 = \max\{k_T : T
\in \Bl_n\}$".
\end{rem}

We now observe the following. Let $n$ be a positive integer
and let $d$ be the product
of an even number of distinct prime divisors of $n$. Also, let us write $
\ze_n^{n/d} = \ze_d.$ Then using Lemma~\ref{lem:mu},  $\{ \ze_d^k : \; k \in
U_d\} \in \Bl_n$.
Observe that $U_d=\{1,1+k_1,1+k_2,\dots,1+k_\ell = d-1\}$ for some $k_i$'s
satisfying $1 \le k_1 < k_2 < \cdots < k_\ell = d-2$.
Therefore, $\ze_d^{d-1}= \ze_d^{-1} = 1 + \ze_d^{k_1} + \cdots +
\ze_d^{k_{l-1}}+
\ze_d^{d-2}$ and hence
$$f(x) = x^{\frac{n}{d}(d-1)}
-x^{\frac{n}{d}(d-2)}-x^{\frac{n}{d}(k_{\ell-1})}-\dots -x^{\frac{n}{d}(k_1)}
-1 \in \A_n$$ is the corresponding polynomial.
Note that $\deg(f(x)) = n - \frac{n}{d}.$ This observation leads to the
first part of the following result. The second part directly follows from
the first part and hence the proof is omitted.

\begin{cor}\label{cor:pq}
Let $n = p_1^{a_1}p_2^{a_2}\cdots p_k^{a_k}$ be a factorization of $n$
into distinct primes and let $d$ be the product of an even number of
distinct prime divisors of $n$. If $U_d=\{1,1+k_1,1+k_2,\ldots,
1+k_\ell\}$ with $1 \le k_1 < k_2 < \cdots < k_\ell = d-2$, then
\begin{enumerate}
\item \label{cor:pq:1}$\Phi_n(x)$ divides the polynomial
$f(x) = x^{\frac{n}{d}(d-1)} -x^{\frac{n}{d}(d-2)}-x^{\frac{n}{d}(k_{\ell-1})}
-\dots -x^{\frac{n}{d}(k_1)}-1.$
\item \label{cor:pq:2} $\min\{ \deg(f(x)) : \; f(x) \in \A_n
\} \le n- \dfrac{n}{p_1 p_2}$, where $p_1$ and $p_2$ are the two smallest
prime divisors of $n$.
\end{enumerate}
\end{cor}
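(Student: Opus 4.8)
The plan is to read the divisibility in Part~\ref{cor:pq:1} directly off the M\"obius-sum identity of Lemma~\ref{lem:mu}, and then to obtain Part~\ref{cor:pq:2} by specializing the divisor $d$. First I would record that, since $d$ is a product of an \emph{even} number of distinct primes, $d$ is square-free with an even number of prime factors, so Lemma~\ref{lem:mu} gives $\sum_{k \in U_d} \ze_d^k = \mu(d) = 1$. Writing $U_d = \{1, 1+k_1, \ldots, 1+k_\ell\}$ with $1 \le k_1 < \cdots < k_\ell = d-2$ (the least residue coprime to $d$ is $1$ and the greatest is $d-1$), I would multiply this relation by $\ze_d^{-1} = \ze_d^{d-1}$ to obtain $\ze_d^{d-1} = 1 + \ze_d^{k_1} + \cdots + \ze_d^{k_\ell}$. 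Equivalently, $\ze_d$ is a root of $g(x) = x^{d-1} - x^{k_\ell} - \cdots - x^{k_1} - 1 \in \Z[x]$.

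Next I would substitute $x \mapsto x^{n/d}$. Because $\ze_n^{n/d} = \ze_d$, the number $\ze_n$ is a root of $f(x) = g(x^{n/d})$, which is precisely the polynomial displayed in Part~\ref{cor:pq:1}. Since $\Phi_n(x)$ is the minimal polynomial of $\ze_n$ over $\Q$ and $f \in \Z[x] \subset \Q[x]$, it follows that $\Phi_n(x) \mid f(x)$, which proves Part~\ref{cor:pq:1}. Conceptually this is the same correspondence as in Theorem~\ref{thm:An-Bn}: the one-sum condition $\mu(d)=1$ places $\{\ze_d^k : k \in U_d\}$ into $\Bl_n$, and the bijection there returns exactly this member $f$ of $\A_n$.

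For Part~\ref{cor:pq:2} I would simply take $d = p_1 p_2$, the product of the two smallest prime divisors of $n$ -- a product of an even number of distinct primes, so Part~\ref{cor:pq:1} applies. I would then check that the resulting $f$ meets every requirement of the definition of $\A_n$: it is monic, has constant term $-1$, its leading exponent is $m = \frac{n}{d}(d-1) = n - \frac{n}{d} < n$, and its subtracted exponents $\frac{n}{d}(d-2) = \frac{n}{d}k_\ell > \cdots > \frac{n}{d}k_1 > 0$ are distinct and lie strictly between $0$ and $m$, so all intermediate coefficients are in $\{0,-1\}$. Hence $f \in \A_n$ with $\deg f = n - \frac{n}{p_1 p_2}$, giving $\min\{\deg f : f \in \A_n\} \le n - \frac{n}{p_1 p_2}$.

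I expect no serious obstacle here; the computation is essentially bookkeeping once the M\"obius-sum identity is in hand. The single point needing care is confirming that the polynomial produced by that identity genuinely satisfies all of the defining conditions of $\A_n$ (the $0,1$-coefficient pattern, the strict ordering and distinctness of the exponents, and the bound $m < n$), since this is exactly what upgrades the bare divisibility $\Phi_n \mid f$ into a usable degree bound for the minimization problem.
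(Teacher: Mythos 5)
Your proof is correct and follows essentially the same route as the paper: both use Lemma~\ref{lem:mu} to get $\sum_{k\in U_d}\ze_d^k=\mu(d)=1$, multiply by $\ze_d^{-1}$ to isolate $\ze_d^{d-1}$, substitute $x\mapsto x^{n/d}$, and then specialize $d=p_1p_2$ for the degree bound. Your explicit check that the resulting polynomial satisfies all the defining conditions of $\A_n$ is a welcome bit of extra care that the paper leaves implicit.
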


\subsection{Integers with Two Prime Factors} \label{sec2:2:1}

Let $n = p_1^{a_1} p_2^{a_2}$ be the factorization of $n$ as product of two
distinct primes $p_1$ and $p_2$. Then it is shown that the upper bound obtained
in Corollary~\ref{cor:pq} is indeed attained. That is,
$\min\{ \deg(f(x)) : f(x) \in \A_{n} \} = \frac{n}{p_1 p_2}\left\{p_1
p_2-1\right\}$.

Before proceeding further,
recall that for any positive integer $n$ and non-negative integer $m$, the
{\em Ramanujan's sum}\index{Ramanujan's sum}  is defined as $c_n(m) =
\sum\limits_{k \in U_n} (\ze_n^k)^m.$ The next lemma is  a well known result
related with the Ramanujan's sum (for results related with Ramanujan's sum and coefficients of cyclotomic polynomials, see  Moree $\&$ Hommerson~\cite{mor}).

\begin{lemma}[Moree $\&$ Hommerson~\cite{mor}]\label{lem:ramanujan}
Fix  positive integers $m$ and $n$.  Then, for each divisor   $d$ of $n$,
$c_n(d)=\mu(\frac{n}{d})\frac{\varphi(n)}{\varphi(\frac{n}{d})}$.
Furthermore, $c_n(m)=c_n(d)$ whenever $\gcd(m,n)=d$.
\end{lemma}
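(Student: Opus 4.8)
The plan is to reduce both assertions to a single Möbius-inversion identity relating Ramanujan's sum to the complete sum of $n$-th roots of unity, and then to read off the two claims from the resulting closed form. First I would record the elementary fact that $\eta_n(m) := \sum_{k=0}^{n-1}\ze_n^{km}$ equals $n$ when $n \mid m$ and $0$ otherwise (a geometric sum; the case $m=1$ is just $\sigma(R_n)=0$). The key structural observation is that every $n$-th root of unity $\ze_n^k$ is a primitive $e$-th root of unity for exactly one divisor $e$ of $n$, namely $e = n/\gcd(k,n)$, and the primitive $e$-th roots are precisely $\{\ze_e^j : j \in U_e\}$. Partitioning $\{0,1,\dots,n-1\}$ accordingly and summing $m$-th powers within each class gives the convolution identity
\[
\sum_{e \mid n} c_e(m) = \eta_n(m),
\]
since $c_e(m) = \sum_{j \in U_e}(\ze_e^j)^m$ by definition.

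Möbius inversion then yields $c_n(m) = \sum_{e \mid n}\mu(n/e)\,\eta_e(m)$, and because $\eta_e(m) = e$ exactly when $e \mid m$ and vanishes otherwise, only divisors $e$ of $\gcd(m,n)$ survive:
\[
c_n(m) = \sum_{e \mid \gcd(m,n)} e\,\mu\!\left(\tfrac{n}{e}\right).
\]
The right-hand side depends on $m$ only through $\gcd(m,n)$, which is exactly the ``furthermore'' clause: if $\gcd(m,n) = d$ then $c_n(m) = c_n(d)$. This settles the second assertion with no further work.

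For the first assertion I would specialize to $m = d$ with $d \mid n$, so that $\gcd(m,n) = d$ and the displayed formula becomes $c_n(d) = \sum_{e \mid d} e\,\mu(n/e)$; it then remains to prove the purely arithmetic identity $\sum_{e \mid d} e\,\mu(n/e) = \mu(n/d)\,\varphi(n)/\varphi(n/d)$. My approach is to observe that both sides are multiplicative functions of $n$ in the following sense: writing $n = n_1 n_2$ with $\gcd(n_1,n_2)=1$ and correspondingly $d = d_1 d_2$ with $d_i \mid n_i$ and $\gcd(d_1,d_2)=1$, the divisor sum factors over the two coprime parts, while $\mu$ and $\varphi$ are multiplicative and the relation $\gcd(m,n_1n_2)=\gcd(m,n_1)\gcd(m,n_2)$ makes $d$ split accordingly. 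This reduces the identity to a prime power $n = p^a$, $d = p^b$ with $0 \le b \le a$, where $\sum_{j=0}^{b} p^j \mu(p^{a-j})$ collapses to the terms with $a-j \in \{0,1\}$ and is checked against the closed form in the three cases $b = a$, $b = a-1$, and $b \le a-2$ (the last giving $0$ on both sides).

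I expect the multiplicativity bookkeeping in this final step to be the main obstacle: one must track carefully how $\gcd(m,n)$, hence $d$, distributes over coprime factors of $n$, and confirm that the quantity $\mu(n/d)\,\varphi(n)/\varphi(n/d)$ respects that factorization. Once multiplicativity is in place the prime-power verification is a short finite computation, so the real content lies in packaging the convolution identity and in establishing that both sides of the target formula factor over prime powers.
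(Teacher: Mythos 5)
The paper does not prove this lemma at all --- it is quoted as a known result and attributed to Moree \& Hommerson, so there is no in-paper argument to compare against. Your proposal is a correct, self-contained proof, and it is the standard one (von Sterneck's formula leading to H\"older's evaluation). The two pillars both check out: the partition of the $n$-th roots of unity by exact order gives $\sum_{e\mid n}c_e(m)=\eta_n(m)$, M\"obius inversion and the vanishing of $\eta_e(m)$ unless $e\mid m$ give $c_n(m)=\sum_{e\mid\gcd(m,n)}e\,\mu(n/e)$, and this immediately yields the ``furthermore'' clause. For the closed form, the factorization over coprime parts is unproblematic because every $e\mid d$ splits uniquely as $e_1e_2$ with $e_i\mid d_i$ and $\mu(n/e)=\mu(n_1/e_1)\mu(n_2/e_2)$, while $\mu(n/d)\varphi(n)/\varphi(n/d)$ is a product of multiplicative functions evaluated at coprime arguments; the prime-power check ($b=a$ gives $\varphi(p^a)$ on both sides, $b=a-1$ gives $-p^{a-1}$, $b\le a-2$ gives $0$) is as short as you anticipate. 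The only stylistic remark is that the multiplicativity detour is avoidable --- one can evaluate $\sum_{e\mid d}e\,\mu(n/e)$ directly by noting that only $e$ with $n/e$ squarefree contribute --- but your route is clean and complete as stated.
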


Let $m < n$ be a positive integer. Then Ramanujan's sum is used to assign a number to  a polynomial
$g(x)=\sum\limits_{i=0}^m a_i x^i \in \Q[x]$ via the sum $\sum_{k\in U_n} g(\ze_n^k)$, denoted $S_g$. Then
\begin{equation}\label{eqn:sg}
S_g=\sum_{i=0}^m a_i c_n(i)=a_0\varphi(n)+
\sum\limits_{d \mid n} \left( \sum\limits_{i\in U_d}a_{ni/d}
\right)\mu(d)\frac{\varphi(n)}{\varphi(d)}. \end{equation}
Since, $\Phi_n(x)$ divides $f(x)$, $f(\ze_n^k) = 0$
for each $k \in U_n$. Thus, the next result is immediate  and hence the proof is
omitted.

\begin{lemma}\label{lem:Sf}
Let  $n$ be a positive integer. Then for each $f(x) \in \A_n$, $S_f = 0$.
\end{lemma}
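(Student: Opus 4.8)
The plan is to unwind the definition of $S_f$ and use the single defining feature of membership in $\A_n$ that is relevant here, namely divisibility by $\Phi_n(x)$. Recall from Equation~(\ref{eqn:sg}) that $S_f = \sum_{k \in U_n} f(\ze_n^k)$. Hence it suffices to show that every individual summand $f(\ze_n^k)$, for $k \in U_n$, vanishes; the asserted identity $S_f = 0$ then follows by summing zeros.

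First I would recall that, by the definitions of $U_n$ and $R_n$ given just before the statement, the set $\{\ze_n^k : k \in U_n\}$ is precisely the set of primitive $n$-th roots of unity, and these are exactly the roots of the cyclotomic polynomial $\Phi_n(x)$. Since $f(x) \in \A_n$, its defining property is that $\Phi_n(x)$ divides $f(x)$ in $\Z[x]$, so we may write $f(x) = \Phi_n(x) g(x)$ for some $g(x) \in \Z[x]$. Evaluating at $\ze_n^k$ for an arbitrary $k \in U_n$ then gives $f(\ze_n^k) = \Phi_n(\ze_n^k)\, g(\ze_n^k) = 0$, because $\ze_n^k$ is a root of $\Phi_n$.

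Substituting this back into the expression for $S_f$ yields $S_f = \sum_{k \in U_n} f(\ze_n^k) = \sum_{k \in U_n} 0 = 0$, which is the desired conclusion. There is essentially no obstacle here — this is exactly why the authors call the result immediate and omit the proof. The only point requiring a moment's care is to match the indexing set $U_n$ appearing in the definition of $S_f$ with the primitive-root description of the zero set of $\Phi_n$, but this identification is immediate from the definitions of $U_n$ and $R_n$ and from the standard fact that $\Phi_n(x)$ is the minimal polynomial of $\ze_n$, whose conjugates are the primitive $n$-th roots of unity.
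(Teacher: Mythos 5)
Your proof is correct and is exactly the argument the paper itself sketches just before the lemma (the authors note that $\Phi_n(x)\mid f(x)$ forces $f(\ze_n^k)=0$ for all $k\in U_n$ and therefore call the result immediate). Nothing further is needed.
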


Therefore, for any  $f(x) \in \Q[x]$, $S_f = 0$ gives a necessary
condition for $\Phi_n(x)$ to divide $f(x)$.
The next result is the main result of this subsection and it is shown that
$\min\{ \deg(f(x)) : f(x) \in \A_{p_1 p_2} \} =
p_1 p_2-1$, whenever $p_1$ and $p_2$ are distinct primes. This result together
with Lemma \ref{lem:squarefree}  implies that if $p_1$ and $p_2$ are distinct
primes and $n=p_1^{a_1} p_2^{a_2}$, for some positive integers $a_1$ and $a_2$,
then $\min\{ \deg(f(x)) : f(x) \in \A_{n} \} = \frac{n}{p_1 p_2}
\left\{p_1 p_2-1\right\}.$

\begin{theorem} \label{thm:p1p2}
Let $p_1$ and $p_2$ be two distinct primes. Then
$$\min\{ \deg(f(x)) : f(x) \in \A_{p_1 p_2} \} = p_1 p_2-1.$$
\end{theorem}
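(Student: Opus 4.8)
The upper bound $\min\{\deg(f):f\in\A_{p_1p_2}\}\le p_1p_2-1$ is already in hand: it is Corollary~\ref{cor:pq}.\ref{cor:pq:2} applied with $n=p_1p_2$ (so $p_1p_2$ itself is the product of its two, hence two smallest, prime divisors), giving $n-\tfrac{n}{p_1p_2}=p_1p_2-1$; equivalently, the primitive-root construction preceding Corollary~\ref{cor:pq}, taken with $d=p_1p_2$, already exhibits a member of $\A_{p_1p_2}$ of degree $p_1p_2-1$. Thus the whole content of the theorem is the matching lower bound. Note that neither Lemma~\ref{lem:lower} (which only yields $>\max\{\varphi(n),\lceil n/2\rceil\}$) nor the single necessary condition $S_f=0$ of Lemma~\ref{lem:Sf} can close the gap up to $n-1$, since the extremal polynomials themselves satisfy these. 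My plan is therefore to analyse $\Bl_{p_1p_2}$ through the bijection of Theorem~\ref{thm:An-Bn}.

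By Remark~\ref{rem:min:degree}, $\min\{\deg(f):f\in\A_n\}=n-k_0$, where $k_0=\max\{k_T:T\in\Bl_n\}$ and $k_T$ is the least $j$ with $\ze_n^j\in T$. Since $\ze_n=\ze_n^1$ is a primitive $n$-th root, any $T\in\Bl_n$ containing all primitive $n$-th roots has $k_T=1$. Hence it suffices to prove the sharper claim: \emph{for $n=p_1p_2$ the set $\Bl_n$ has exactly one element, namely $\{\ze_n^k:k\in U_n\}$, the set of all primitive $n$-th roots of unity.} (That this set lies in $\Bl_n$ is Lemma~\ref{lem:mu}, as its sum is $\mu(p_1p_2)=1$.) Granting the claim, $k_0=1$ and the minimal degree is $n-1$, completing the proof.

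To prove uniqueness I would pass to the integral basis of $\Q(\ze_n)$ coming from the Chinese Remainder Theorem. Writing $\omega_1=\ze_n^{n/p_1}$ and $\omega_2=\ze_n^{n/p_2}$ for primitive $p_1$- and $p_2$-th roots, the isomorphism $\Z_n\cong\Z_{p_1}\times\Z_{p_2}$ lets us write each $\ze_n^k$ uniquely as $\omega_1^a\omega_2^b$ with $0\le a<p_1$, $0\le b<p_2$, and $\ze_n^k$ is primitive exactly when $a\neq0$ and $b\neq0$. A subset $T\subset R_n$ then becomes a $0,1$-matrix $M=[M_{a,b}]$ of size $p_1\times p_2$. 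Reducing into the basis $\{\omega_1^a\omega_2^b:0\le a\le p_1-2,\ 0\le b\le p_2-2\}$ by means of $\sum_{a=0}^{p_1-1}\omega_1^a=0$ and $\sum_{b=0}^{p_2-1}\omega_2^b=0$, one finds that the coefficient of $\omega_1^a\omega_2^b$ in $\sigma(T)$ equals
\[
M_{a,b}-M_{p_1-1,\,b}-M_{a,\,p_2-1}+M_{p_1-1,\,p_2-1}.
\]
The requirement $\sigma(T)=1$ is thus the linear system demanding this expression to be $1$ at $(a,b)=(0,0)$ and $0$ otherwise.

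The final step is to solve this system over $\{0,1\}$ subject to $1\notin T$, i.e.\ $M_{0,0}=0$. Setting $c_a=M_{a,p_2-1}$, $r_b=M_{p_1-1,b}$, $\alpha=M_{p_1-1,p_2-1}$, each equation with $(a,b)\neq(0,0)$ reads $M_{a,b}=c_a+r_b-\alpha$, while the origin equation carries an extra $+1$ from the target. The origin equation with $M_{0,0}=0$ forces $\alpha=1$ and $c_0=r_0=0$; membership in $\{0,1\}$ then forces $c_a=1$ for $1\le a\le p_1-2$ and $r_b=1$ for $1\le b\le p_2-2$, whence $M_{a,b}=1$ precisely when $a\ge1$ and $b\ge1$. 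In CRT coordinates this is exactly $\gcd(k,n)=1$, so $T$ is the set of primitive $n$-th roots and is unique. The main obstacle is precisely this cascade: one must verify that the $0,1$-constraint together with $M_{0,0}=0$ leaves no freedom in the ``boundary'' data $c_a,r_b,\alpha$ — once these are pinned down, every interior entry is determined, and the unique admissible configuration is the primitive-root set, as required.
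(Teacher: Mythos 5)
Your proof is correct, but it takes a genuinely different route from the paper's. The paper proves the lower bound by applying the Ramanujan--sum functional $S_g$ of Equation~(\ref{eqn:sg}) to the auxiliary polynomial $g(x)=\sum_i x^{k_i}-1$ attached to a hypothetical $T\in\Bl_n$ with all exponents $\ge 2$; writing $N_d$ for the exponents of $T$ lying in the $d$-layer, the identity $S_g=0$ becomes $\tfrac{|N_n|}{\varphi(n)}=1+\tfrac{|N_{p_1}|}{\varphi(p_1)}+\tfrac{|N_{p_2}|}{\varphi(p_2)}$, which is impossible since $|N_n|<\varphi(n)$. (So your parenthetical remark that the condition ``$S_f=0$'' cannot close the gap is slightly off target: the paper closes the gap with exactly that tool, only applied to the shifted polynomial $g$ coming from the bijection of Theorem~\ref{thm:An-Bn} rather than to $f$ itself.) Your argument instead classifies $\Bl_{p_1p_2}$ completely via the CRT tensor basis $\{\omega_1^a\omega_2^b\}$ and the resulting rank-one linear system; the $\{0,1\}$-constraint cascade you describe is sound (the origin equation pins $\alpha=1$, $c_0=r_0=0$, the first row and column then force $c_a=r_b=1$, and the interior follows), and it is in effect a hands-on rederivation of the de Bruijn/Lam--Leung structure theorem for vanishing sums of $p_1p_2$-th roots of unity. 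What your approach buys is strictly more than the theorem asks: $\Bl_{p_1p_2}$, and hence $\A_{p_1p_2}$, is a \emph{singleton}, so the extremal polynomial of degree $p_1p_2-1$ is the unique element of $\A_{p_1p_2}$. What the paper's approach buys is brevity and reuse: it only needs to show that every $T\in\Bl_n$ contains $\ze_n^1$, and it does so with the Ramanujan-sum machinery already set up for Sections~\ref{sec2:2:1} and~\ref{sec2:2:2}. Both arguments use the same upper bound, namely Corollary~\ref{cor:pq}.\ref{cor:pq:2} with $d=p_1p_2$.
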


\begin{proof}
Let $n = p_1 p_2$. Then using a contrapositive argument, we will first show that
$\min\{ \deg(f(x)) : f(x) \in \A_{n} \} \ge
n-1$.  Let $f(x) \in \A_{n}$ be the
polynomial of least degree with $\deg(f(x)) < n-1$. Then
Theorem~\ref{thm:An-Bn} gives the existence of a subset $T = \{ \ze_n^{k_1},
\ze_n^{k_2}, \ldots, \ze_n^{k_\ell}\}$ of $\Bl_n$ with $2 \le k_1 < k_2 < \cdots
< k_\ell$ that corresponds to $f(x)$. Define $g(x)
= \sum\limits_{i=1}^\ell x^{k_i} - 1. $ Then $g(x) \in \Z[x]$ and $g(\ze_n) =
0$. Thus, for all $k \in U_n, g(\ze_n^k)=0$ and $S_g = 0$.

Now, for each divisor $d$ of $n$, define $N_d = \{i \frac{n}{d}  : \; i\in U_d\} \cap \{k_1, k_2,
\ldots, k_\ell\}$. Then, using Equation~(\ref{eqn:sg}), one has $0 = S_g =
\sum\limits_{d \mid n} |N_d| \; \mu(d) \;\frac{\varphi(n)}{\varphi(d)} -
\varphi(n).$ Or equivalently, $\varphi(n) = \sum\limits_{d \mid n} |N_d| \;
\mu(d) \;\frac{\varphi(n)}{\varphi(d)}.$ Therefore, using
$\mu(p_i) = -1$ for $i=1,2$ and $\mu(p_1 p_2) = 1$, one gets
$$\frac{|N_n|}{\varphi(n)}=1+ \frac{|N_{p_1}|}{\varphi(p_1)}+
\frac{|N_{p_2}|}{\varphi(p_2)}$$
as $|N_1|=0$.
But observe that $|N_n| < \varphi(n)$ as $k_1 \ge 2$. That is, the left hand
side of the above identity is less than $1$ which contradicts the expression
that appears on the right hand side. Thus, our assumption is not valid and
hence $\min\{ \deg(f(x)) :
f(x) \in \A_{n} \} \ge n-1$.

As $n = p_1 p_2$, Corollary~\ref{cor:pq}.\ref{cor:pq:2} implies that
$\A_n$ is non-empty and hence
$\min\{ \deg(f(x)) : f(x) \in \A_{n} \} \le n-1$. Thus, the
required result follows.
 \end{proof}

\subsection{Even Integers with $3$ or more Prime Factors}
\label{sec2:2:2}

In this subsection, we improve the bound
given in  Corollary~\ref{cor:pq} for all even positive integers that have more
than $2$ prime factors.

\begin{theorem}\label{thm:main}
Let $ p_1<p_2<\cdots <p_k$ be odd primes and let $n= 2 p_1 p_2\cdots p_k$. Then
$\min\{ \deg(f(x)) : f(x) \in \A_{n} \}\leq n-v$ where $$v=
\begin{cases}
\frac{n}{2} \cdot \frac{p_1+p_2}{p_1p_2}, & {\mbox{ if }}\; 2p_1>p_2,\\
\frac{3n}{2p_2},  & {\mbox{ if }}\; 2p_1<p_2<3p_1,\\
\frac{n}{2p_1}, & {\mbox{ if }}\; 3p_1<p_2.
\end{cases}$$
\end{theorem}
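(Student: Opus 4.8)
The plan is to prove the inequality by exhibiting, in each regime, an explicit polynomial of degree $n-v$ in $\A_n$. By Theorem~\ref{thm:An-Bn} and Remark~\ref{rem:min:degree} this is the same as producing a subset $T\subseteq R_n\setminus\{1\}$ with $\sigma(T)=1$ whose least exponent is at least $v$. First I would reduce to the three--prime modulus $N=2p_1p_2$. Putting $m=n/N=p_3\cdots p_k$ and $\ze_N=\ze_n^{\,m}$, any subset $T'\subseteq R_N\setminus\{1\}$ with $\sigma(T')=1$ and least exponent $\mu_0$ pulls back under $s\mapsto ms$ to a subset of $R_n\setminus\{1\}$ of sum $1$ whose least exponent is $m\mu_0$ (the map is injective on exponents and scales them by $m$). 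A direct computation gives $vN/n=\max\{p_2,\ \min\{3p_1,\ p_1+p_2\}\}$ in the three respective cases, and since the boundary equalities cannot occur (if $2p_1=p_2$ then $p_2$ is even, and if $3p_1=p_2$ then $3\mid p_2$, both impossible for the odd prime $p_2>p_1\ge 3$), the three strict cases are exhaustive; so it suffices to build $T'$ in $R_N$ with least exponent equal to this quantity.

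Second, I would produce two candidate sets in $R_N$ and retain the better one. Construction~I is the set $P_1=\{\ze_N^{\,p_2u}:u\in U_{2p_1}\}$ of primitive $(2p_1)$--th roots; by Lemma~\ref{lem:mu} its sum is $\mu(2p_1)=1$ and its least exponent is $p_2$. This already disposes of the case $3p_1<p_2$ and is exactly the set underlying Corollary~\ref{cor:pq}. Construction~II starts from the set $P_2=\{\ze_N^{\,p_1u}:u\in U_{2p_2}\}$ of primitive $(2p_2)$--th roots (sum $\mu(2p_2)=1$, least exponent $p_1$), deletes its single low element $\ze_N^{\,p_1}$, and adds in its place a set $C$ of equal sum but large exponents. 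To obtain $C$ I would multiply the vanishing sum $\sum_{t=0}^{p_1-1}\ze_N^{\,2p_2t}=0$ of all $p_1$--th roots by $\ze_N^{\,p_1}$ and flip every sign using $\ze_N^{\,N/2}=-1$ (valid as $N$ is even); this yields $C=\{\ze_N^{\,p_1p_2+p_1+2p_2t}:1\le t\le p_1-1\}$ with $\sigma(C)=\ze_N^{\,p_1}$, so that $T'=(P_2\setminus\{\ze_N^{\,p_1}\})\cup C$ satisfies $\sigma(T')=\sigma(P_2)-\ze_N^{\,p_1}+\sigma(C)=1$.

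The verification then rests on two points. Disjointness of $C$ from the retained part of $P_2$ is immediate, since the exponents of $C$ are $\not\equiv0\pmod{p_1}$ whereas those of $P_2$ are $\equiv0\pmod{p_1}$. For the least exponent, the retained part of $P_2$ has least exponent $3p_1$ (the next index of $U_{2p_2}$ after $1$ is $3$, even indices being excluded), while reducing $p_1p_2+p_1+2p_2t$ modulo $N$ shows that $C$ has least exponent exactly $p_1+p_2$, attained at $t=(p_1+1)/2$. Hence Construction~II has least exponent $\min\{3p_1,p_1+p_2\}$, and retaining the larger of the two constructions gives least exponent $\max\{p_2,\min\{3p_1,p_1+p_2\}\}=vN/n$, as required. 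I expect the one delicate step to be this modular reduction for $C$: keeping every exponent of $C$ from dropping below $p_1+p_2$ amounts to bounding the wrap--around modulo $N$, and this is exactly where the oddness of $p_1$ and the inequality $p_1<p_2$ enter, and where the three--way split is forced.
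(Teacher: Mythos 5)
Your proposal is correct and follows essentially the same route as the paper: the bound $v_1=\frac{n}{2p_1}$ from Corollary~\ref{cor:pq} together with a second explicit one-sum whose least exponent is $\min\{3p_1,\,p_1+p_2\}$ after rescaling by $n/(2p_1p_2)$, and the same three-way comparison. In fact your Construction~II, although derived by deleting $\ze_N^{p_1}$ from the primitive $2p_2$-th roots and restoring it via a sign-flipped vanishing sum of $p_1$-th roots, yields exactly the set $T'$ of Equation~(\ref{eqn:v2}) in the paper, so the two arguments coincide.
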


\begin{proof}
Let $f_0(x)$ be the polynomial of least degree in $\A_n$.  We will find numbers
$v_1$ and $v_2$, as lower bounds for $n-\deg(f_0(x))$ and take
$v=max\{v_1,v_2\}$. The value of $v_1=\frac{n}{2 p_1}$ is a direct application
of
Corollary~\ref{cor:pq}  as $2$ and $ p_1$ are the smallest two prime
divisors of $n$. Now, let us compute $v_2$.

To get the value of $v_2$, consider  $T = \{ \ze_n^{n r/{2 p_1}} : \; r \in
U_{2 p_1} \} \cup \{ \ze_n^{n \ell/{p_2}} : \; \ell \in U_{p_2} \}.$ Then using
Lemma~\ref{lem:mu},
$ \sum\limits_{z \in T} z = \sum\limits_{r \in U_{2 p_1}}  \ze_n^{n r/{2
p_1}} + \sum\limits_{\ell \in U_{p_2}} \ze_n^{n \ell/{p_2}} = 1 + (-1) =
0.$ Multiplying both sides by $\ze_n^{n/{2 p_2}}$ and observing that
$\left(\ze_n^{n/{2 p_2}}\right)^{p_2} = -1 $ (as $p_2$ is an odd prime), one gets
\begin{eqnarray}
0 &=& \sum_{r \in U_{2 p_1}}  \ze_n^{n r/(2 p_1) + n/(2 p_2)} + \sum_{\ell
\in U_{p_2}} \ze_n^{n \ell/{p_2} + n/(2 p_2)} \nonumber \\
&=&  \sum_{r \in U_{2 p_1}} \ze_n^{n r/(2 p_1) + n/(2 p_2)} + \sum_{\ell
\in U_{p_2},  2 \ell < p_2 -1 } \ze_n^{n \ell/{p_2} + n/(2 p_2)}
+\ze_n^{\frac{n}{2}}
\nonumber\\
&& \hspace{.5in} + \sum_{\ell \in U_{p_2},  2 \ell > p_2 - 1 } \ze_n^{n
\ell/{p_2} + n/(2
p_2)}  \nonumber \\
&=&  \sum_{r \in U_{2 p_1}} \ze_n^{n r/(2 p_1) + n/(2 p_2)} + \sum_{\ell
\in U_{p_2},  2 \ell < p_2 -1 } \ze_n^{n \ell/{p_2} + n/(2 p_2)} -1\nonumber \\
&& \hspace{.5in} + \sum_{\ell \in U_{p_2},  2 \ell > p_2 - 1 } \ze_n^{n
\ell/{p_2} + n/(2
p_2)}. \label{eqn:v2}
\end{eqnarray}
Thus, Equation~(\ref{eqn:v2}) implies
$$T' = \{ \ze_n^{n r/(2 p_1) + n/(2 p_2)} : r \in U_{2 p_1}\} \cup \{ \ze_n^{n
\ell/{p_2} + n/(2
p_2)} : \ell \in U_{p_2}\setminus \{(p_2-1)/2\} \}  \; \in \Bl_n.$$ That is,
$v_2 =\min\{ r : \ze_n^r \in T' \} =
\begin{cases}
\frac{n}{2} \left( \frac{p_1+p_2}{p_1 p_2} \right), & {\mbox{ if }}\; 2p_1>p_2,
\\
\frac{3n}{2p_2}, & {\mbox{ if }}\; 2p_1<p_2.
\end{cases}$

Hence, using Remark~\ref{rem:min:degree}  the required result follows.

\end{proof}

\subsection{When $n$ is Even and $\Phi_n(x)$ is Flat}
In this subsection, the upper bound for $\min\{ \deg(f(x)) : f(x) \in \A_{n} \}$ is improved further  whenever $n$ is even  and the cyclotomic polynomial $\Phi_n(x)$ is {\em flat}\index{flat polynomial}. To do so, recall that the {\em height}\index{height of a polynomial} of a polynomial in $\Z[x]$ is the largest absolute value of its coefficients and a polynomial is said to be {\em flat} if its height is $1$.
 Let $A(n)$ be the height of $\Phi_n(x)$.
It is known that for all $n < 105$,
$\Phi_n(x)$ is flat and height of $\Phi_{105}(x)$ is $2$. In
fact, the height of $\Phi_n(x)$ is unbounded [see Emma Lehmer~\cite{lehmer}].

Let $k$ be the number of distinct odd
prime factors of $n$. For square-free $n$, this number $k$ is
called the  {\em order}\index{order of a cyclotomic polynomial} of the cyclotomic polynomial
$\Phi_n(x)$.  It is known that all cyclotomic polynomials of order
$1$ and order $2$ are flat.
Gennady Bachman~\cite{gb} gave the first infinite family of flat cyclotomic
polynomials of order three
 and this family was expanded by Kaplan~\cite{kap1}. In \cite{kap2}, Kaplan gave
some flat
polynomials of order four. It is unknown whether there
are any flat cyclotomic polynomials of order greater than four.

Fix a positive integer $k$ and let  $n= 2 p_1 p_2 \cdots p_k$, for distinct odd primes $p_1<p_2<\cdots < p_k$.
Let  $\Phi_n(x)=\prod\limits_{i=1}^{\varphi(n)}(x-x_i) =\sum\limits_{t=0}^{\varphi(n)}(-1)^t e_t x^{\varphi(n)-t}$, where  $x_1, x_2,\ldots, x_{\varphi(n)}$ are distinct roots of $\Phi_n(x)$ and
$e_t=\sum\limits_{1\leq i_1<i_2<\dots<i_t\leq \varphi(n)}\prod\limits_{j=1}^tx_{i_j}$. Then it is known that $e_t=e_{\varphi(n)-t}$, for $0\leq t\leq \varphi(n)$ and  $e_0=1$ [see Thangadurai~\cite{tan}]. Further,  by  Newton-Girard formulas
\begin{equation}\label{eqn:sum}
me_m=e_{m-1}c_n(1)-e_{m-2}c_n(2)+\dots + (-1)^me_1c_n(m-1)+(-1)^{m-1}c_n(m).
\end{equation}
where $c_n(m)$ is the Ramanujan's sum defined in Page~\pageref{lem:ramanujan}. In particular, using Lemma~\ref{lem:mu} $e_1=c_n(1)= \mu(n) = -1$.

Now let $k$ be an even integer. That is, $n$ is product of odd number of distinct primes.   Then for any positive integer $m<p_1$,
\begin{equation*}
c_n(m)=
\begin{cases}
- 1, &
{\mbox{ if }} \; m \; {\mbox{ is odd,}} \\
1, &
{\mbox{ if }} \; m \; {\mbox{ is even. }}
\end{cases}
\end{equation*}
Now, using Equation~(\ref{eqn:sum}) recursively, it is easy to show that $e_2=\cdots=e_{p_1-1}=0$ and $e_{p_1}=1$.

With these observations, we have
\begin{equation}\label{eq:Phi}
\Phi_n(x)= \begin{cases} x^{\varphi(n)}-x^{\varphi(n)-1}\pm \dots -x+1, &
{\mbox{ if }} \; k \; {\mbox{ is odd,}} \\
x^{\varphi(n)}+x^{\varphi(n)-1}-x^{\varphi(n)-p_1}\pm \dots -x^{p_1}+x+1, &
{\mbox{ if }} \; k \; {\mbox{ is even.}}
\end{cases}
\end{equation}

From now on, we consider only flat cyclotomic polynomials. Then $\Phi_n(x)=f_1(x)-f_2(x)$, for some $0,1$-polynomials $f_1(x)$ and $f_2(x)$. Observe that  the representation of $\Phi_n(x)$ as difference of two $0,1$-polynomials is unique. Also,
$ \Phi_n(\ze_n)=0$ implies that $  f_1(\ze_n)-f_2(\ze_n)=0$ and hence $ f_1(\ze_n)+\ze_n^{n/2}\cdot f_2(\ze_n)=0.$  That is, $ \Phi_n(x)$ divides  $f_1(x)+x^{n/2}f_2(x)$.

Let $\Phi^T_n(x)= f_1(x)+x^{n/2}f_2(x)$. Then $\Phi^T_n(x)$ is a
$0,1$-polynomial and $\Phi_n^T(\ze_n) = 0$.
And from Equation~(\ref{eq:Phi}), we have
\begin{equation}
 \deg(\Phi^T_n(x))=\begin{cases} \phi(n)-1+\frac{n}{2},\;\; \mbox{whenever }  k \mbox{ is odd},\\
\phi(n)-p_1+\frac{n}{2},\;\; \mbox{whenever }    k \mbox{ is even.}
                    \end{cases}
\end{equation}

We now construct a polynomial
$\Phi_n^*(x)\in \A_n$ from $\Phi^T_n(x)$  as follows. Let the degree of $\Phi_n^T(x)$ be $D$. Consider the monomials
in $\Phi_n^T(x)$ having exponent strictly between $D-n/2$ and $n/2$. If $x^b$ is the monomial
with smallest exponent among these, then $\Phi_n^*(x)=x^{b+n/2}+x^b-\Phi_n^T(x)$.
Since $n$ is even, $\Phi_n(x)$ divides $x^{n/2} + 1$ and hence $\Phi_n(x)^* \in \A_n$. Also, the monomial $x^b$ comes from the polynomial $f_1(x)$ and therefore
\begin{equation}\label{eqn:Phi}
\deg(\Phi_n^*(x))= \begin{cases}
\frac{n}{2}+\varphi(n), & {\mbox{ if }} k  {\mbox{ is odd,}} \\
\frac{n}{2}+\varphi(n)-1, & {\mbox{ if }} k  {\mbox{ is even.}}
\end{cases}
\end{equation}

 Since $\Phi_n^*(x)\in \A_n$, using Equation~(\ref{eqn:Phi}), the following result follows and hence the proof is omitted.

\begin{lemma}\label{lem:flat}
Let $n= 2 p_1 p_2\cdots p_k$ be the factorization of  $n$ into
odd primes  $p_1<p_2<\cdots <p_k$. Suppose that the cyclotomic polynomial $\Phi_n(x)$ is flat. Then
$$\min\{ \deg(f(x)) : f(x) \in \A_{n} \}\leq
\begin{cases}
\frac{n}{2}+\varphi(n), & {\mbox{\; if }}  k {\mbox{ is odd}}, \\
\frac{n}{2}+\varphi(n)-1, & {\mbox{ if }} k {\mbox{ is even}}.\\
\end{cases}$$
\end{lemma}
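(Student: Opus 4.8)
The plan is to prove the inequality by exhibiting a single explicit polynomial $\Phi_n^*(x) \in \A_n$ whose degree equals the right-hand side; since the claim is an upper bound on a minimum, producing one admissible polynomial suffices. Everything rests on the flatness hypothesis, which by the uniqueness noted in the text lets us write $\Phi_n(x) = f_1(x) - f_2(x)$, where $f_1(x)$ and $f_2(x)$ are the $0,1$-polynomials collecting the monomials of $\Phi_n(x)$ with coefficient $+1$ and $-1$ respectively. First I would record the two structural facts that drive the argument: since $n$ is even, $\ze_n^{n/2} = -1$, so $\Phi_n(x)$ divides $x^{n/2}+1$; and Equation~(\ref{eq:Phi}) gives the precise shape of $\Phi_n(x)$ in both parities of $k$, including the gap coming from $e_2 = \cdots = e_{p_1-1} = 0$.

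Next I would introduce the auxiliary $0,1$-polynomial $\Phi_n^T(x) = f_1(x) + x^{n/2} f_2(x)$. Evaluating at $\ze_n$ and using $\ze_n^{n/2} = -1$ gives $\Phi_n^T(\ze_n) = f_1(\ze_n) - f_2(\ze_n) = \Phi_n(\ze_n) = 0$, so $\Phi_n(x) \mid \Phi_n^T(x)$, and its degree $D$ is read off from Equation~(\ref{eq:Phi}) as $\varphi(n)-1+\frac{n}{2}$ for $k$ odd and $\varphi(n)-p_1+\frac{n}{2}$ for $k$ even. The obstruction is that $\Phi_n^T$ has every nonzero coefficient equal to $+1$, whereas membership in $\A_n$ demands the companion sign pattern: a monic leading term, constant term $-1$, and all intermediate coefficients in $\{0,-1\}$. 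To fix the signs I would pass to $\Phi_n^*(x) = x^{b+n/2} + x^b - \Phi_n^T(x)$, where $x^b$ is the monomial of $\Phi_n^T(x)$ of least exponent lying strictly between $D - \frac{n}{2}$ and $\frac{n}{2}$.

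I would then verify $\Phi_n^*(x) \in \A_n$ coefficient by coefficient. Divisibility is immediate: $x^{b+n/2} + x^b = x^b(x^{n/2}+1)$ and $\Phi_n^T(x)$ are each divisible by $\Phi_n(x)$, using $\gcd(\Phi_n,x)=1$. Because $b > D - \frac{n}{2}$, the exponent $b+\frac{n}{2}$ exceeds $D$, so $x^{b+n/2}$ survives as the monic leading term; because $b < \frac{n}{2}$ we get $\deg \Phi_n^* = b + \frac{n}{2} < n$; the term $x^b$ cancels the corresponding monomial of $\Phi_n^T$, whose coefficient there is $1$ by the choice of $b$; the constant term becomes $0 - 1 = -1$, since $\Phi_n(x)$ has constant term $1$ and hence that term sits in $f_1$; and every remaining coefficient equals $-1$ times a $0,1$-coefficient of $\Phi_n^T$, so it lies in $\{0,-1\}$. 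This places $\Phi_n^*$ in $\A_n$ with $\deg \Phi_n^* = b + \frac{n}{2}$, as recorded in Equation~(\ref{eqn:Phi}).

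The final step, which I expect to be the genuine obstacle, is to pin down $b$ exactly; this is where the detailed low-order structure of $\Phi_n(x)$ is essential. For $k$ odd the leading term $x^{\varphi(n)}$ belongs to $f_1$ and $\varphi(n)$ is the smallest integer exceeding $D - \frac{n}{2} = \varphi(n)-1$, so $b = \varphi(n)$; for $k$ even the gap forced by $e_2 = \cdots = e_{p_1-1} = 0$ (established via Equation~(\ref{eqn:sum})) makes $\varphi(n)-1$ the smallest exponent of $\Phi_n^T$ above $D - \frac{n}{2} = \varphi(n)-p_1$, so $b = \varphi(n)-1$. In both cases I would also check $\varphi(n) < \frac{n}{2}$, which is immediate from $\varphi(n) = \prod_i (p_i-1) < \prod_i p_i = \frac{n}{2}$, to confirm that the chosen $x^b$ really lies in the admissible window. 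Substituting these values of $b$ gives $\deg \Phi_n^* = \frac{n}{2} + \varphi(n)$ when $k$ is odd and $\frac{n}{2} + \varphi(n) - 1$ when $k$ is even, which is exactly the asserted bound.
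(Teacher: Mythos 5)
Your proposal is correct and follows essentially the same route as the paper: the paper likewise forms $\Phi_n^T(x)=f_1(x)+x^{n/2}f_2(x)$, corrects the signs via $\Phi_n^*(x)=x^{b+n/2}+x^b-\Phi_n^T(x)$ with $x^b$ the least-exponent monomial in the window $(D-\tfrac{n}{2},\tfrac{n}{2})$, and reads off the degree from Equation~(\ref{eq:Phi}). Your explicit identification of $b=\varphi(n)$ (for $k$ odd) and $b=\varphi(n)-1$ (for $k$ even), together with the check $\varphi(n)<\tfrac{n}{2}$, only makes explicit what the paper leaves implicit.
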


\begin{rem}
In general, we are not able to give exact comparison between the bounds obtained in Theorem~\ref{thm:main} and the bound in Lemma~\ref{lem:flat}. But it can be checked that whenever $3 p_1 < p_2$ then the bound in Lemma~\ref{lem:flat} is better than the bound in Theorem~\ref{thm:main}.
\end{rem}

\section*{Conclusion}

In this paper, we have tried to study the representations of subfields of a cyclotomic field with the help of circulant  and $0,1$-companion matrices. In particular, the following results have been obtained.

\begin{enumerate}
\item  A subfield of a cyclotomic field is representable by some circulant
matrix and conversely every circulant matrix represents a
subfield of a cyclotomic field.

\item Every real subfield of $\Q[\zeta_n]$ is representable by
a polynomial in the adjacency matrix of $C_n$, the cyclic graph.
Consequently, every real subfield of $\Q[\zeta_n]$ has integer symmetric
circulant matrix representation.

\item  Let $p$ be a prime and let $\K$ be a subfield $\Q[\ze_p]$. Then a $0,1$
circulant matrix $A$ of order $p$ is obtained such that $(A,\J)$ represents
$\K$.

\item Let $n= p^k$ for some prime $p$. Then the smallest $0,1$-companion matrix having $\ze_n$ as an eigenvalue  is $W_n$, the companion matrix of $x^n - 1$.

\item Let $n=p_1^{a_1} p_2^{a_2}$ be the prime factorization of $n$ as product of distinct primes. Then $\min\{ \deg(f(x)) : f(x) \in \A_{n} \} = \frac{n}{p_1p_2}(p_1p_2-1)$.

\item Let $n$ be a positive integer having $3$ or
more prime factors. Then  $\min\{ \deg(f(x)) : f(x) \in \A_{n} \} \le \frac{n}{p_1p_2}(p_1p_2-1)$, where $p_1$ and $p_2$ are the smallest two distinct primes dividing $n$. Furthermore, if  $n$ is even then this  upper bound is improved in Theorem~\ref{thm:main} and Lemma~\ref{lem:flat}.

It will be nice to improve the bounds obtained in this paper. Also, it will be nice to get examples where the bounds are attained.
\end{enumerate}

\end{document}